\definecolor{darkgoldenrod}{rgb}{0.12, 0.28, 0.59}
\DeclareFontFamily{U}{wncy}{}
\DeclareFontShape{U}{wncy}{m}{n}{<->wncyr10}{}
\DeclareSymbolFont{mcy}{U}{wncy}{m}{n}
\DeclareMathSymbol{\Sh}{\mathord}{mcy}{"58}
\newcommand{\gq}{{\mathfrak{q}}}
\newcommand{\gp}{{\mathfrak{p}}}
\newcommand{\im}{\mathrm{Im}}
\newcommand{\Fil}{\mathrm{Fil}}
\newcommand{\Gal}{{\mathrm{Gal}}}
\newcommand{\Sel}{{\mathrm{Sel}}}
\newcommand{\cris}{{\mathrm{cris}}}
\newcommand{\Hom}{\mathrm{Hom}}
\newcommand{\Q}{{\mathbf Q}}
\newcommand{\Z}{{\mathbf Z}}
\newcommand{\Dcris}{\mathbb{D}_{\mathrm{cris}}}
\newcommand{\col}{\mathrm{Col}}
\newcommand{\Qp}{\mathbf{Q}_p}
\newcommand{\Zp}{\mathbf{Z}_p}
\definecolor{Green}{rgb}{0.0, 0.5, 0.0}
\theoremstyle{definition}
\newtheorem{lemma}{Lemma}[section]
\newtheorem{definition}[lemma]{Definition}
\newtheorem{remark}[lemma]{Remark}
\newtheorem{corollary}[lemma]{Corollary}
\newtheorem{proposition}[lemma]{Proposition}
\newtheorem{conjecture}[lemma]{Conjecture}
\newtheorem{theorem}[lemma]{Theorem}
\newtheorem{lthm}{Theorem}
\newtheorem{notation}[lemma]{Notation}
\title[Functional equations over $\mathbf{Z}_p^2$-extensions]{Functional equations for supersingular abelian varieties over $\mathbf{Z}_p^2$-extensions}
\author[Cédric Dion]{Cédric Dion}
\address[C.~Dion]{D\'epartement de Math\'ematiques et de Statistique, Universit\'e Laval, Pavillion Alexandre-Vachon, 1045 Avenue de la M\'edecine, Qu\'ebec, QC, Canada G1V 0A6
}
\email{cedric.dion.1@ulaval.ca}
\date{}
\begin{document}

\subjclass[2020]{11R23, 11F80, 14K05}
\keywords{Functional equations, abelian varieties, non-ordinary primes, Selmer group}

\maketitle

\begin{abstract}
Let $K$ be an imaginary quadratic field and $K_\infty$ be the $\mathbf{Z}_p^2$-extension of $K$. Answering a question of Ahmed and Lim, we show that the Pontryagin dual of the Selmer group over $K_\infty$ associated to a supersingular polarized abelian variety admits an algebraic functional equation. The proof uses the theory of $\Gamma$-system developed by Lai, Longhi, Tan and Trihan. We also show the algebraic functional equation holds for Sprung's chromatic Selmer groups of supersingular elliptic curves along $K_\infty$.

\medskip

\noindent\textsc{R\'esum\'e.}
Soit $K$ un corps quadratique imaginaire et $K_\infty$ l'unique $\Zp^2$-extension de $K$. Répondant à une question d'Ahmed et Lim, nous montrons que le dual de Pontryagin du groupe de Selmer sur $K_\infty$ associé à une variété abélienne supersingulière et polarisée admet une équation fonctionnelle algébrique. La preuve utilise la théorie des $\Gamma$-systèmes développée par Lai, Longhi, Tan et Trihan. Nous démontrons aussi qu'une équation fonctionnelle algébrique tient pour les groupes de Selmer chromatiques de Sprung sur $K_\infty$ de courbes elliptiques supersingulières. 
\end{abstract}

\section{Introduction}

Fix an odd prime number $p$. Let $V$ be a finite dimensional $\Qp$-vector space with a continuous $\Gal (\overline{\Qp}/\Qp)$-action. Suppose that $V$ is ordinary at $p$ in the sense of \cite{Gre89} and choose $T$ a $\Gal (\overline{\Qp}/\Qp)$-stable $\Zp$-lattice inside $V$. Let $A=V/T$. Let $\Q_\infty$ denote the cyclotomic $\Zp$-extension of $\Q$ with Galois group $\Gamma$. Write $S_A(\Q_\infty)$ for the Greenberg Selmer group of $A$ over $\Q_\infty$ as defined in \cite[Page 98]{Gre89}. Then, $S_A(\Q_\infty)$ is a module over the Iwasawa algebra $\Lambda = \Zp \llbracket \Gamma \rrbracket$. Let $\iota:\Lambda \to \Lambda$ be the involution induced by sending $\sigma \in \Gamma$ to $\sigma^{-1}$. For a $\Lambda$-module $M$ we define the $\Lambda$-module $M^\iota$ as the same set as $M$ but with action given by $\lambda \cdot m = \iota(\lambda)\cdot m$ for all $m\in M$ and $\lambda \in \Lambda$. Under the assumption that $S_A(\Q_\infty)$ is $\Lambda$-cotorsion (the Pontryagin dual $S_A(\Q_\infty)^\vee$ of $S_A(\Q_\infty)$ is a torsion $\Lambda$-module) and that the values of the associated $L$-functions $L_V(1)$ and $L_{V^\ast}(1)$ are critical values, Greenberg proved \cite[Theorem 2]{Gre89} that $S_A(\Q_\infty)$ and $S_{A^\ast}(\Q_\infty)^\iota$ have the same characteristic ideal where $A^\ast = \Hom (V,\Qp(1))/ \Hom (T,\Zp (1))$. He further makes the following conjecture \cite[Section 8 equation (66)]{Gre89}:

\begin{conjecture}\label{Greenberg conjecture}
One should expect that $S_A(\Q_\infty)^\vee \sim S_{A^\ast}(\Q_\infty)^{\vee,\iota}$ when $V$ is $p$-critical.
\end{conjecture}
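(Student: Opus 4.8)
The plan is to prove the functional equation --- in the $\Zp^2$-setting of $K_\infty/K$ treated here, with Conjecture~\ref{Greenberg conjecture} over $\Q_\infty$ as a special case --- as a statement of pure Iwasawa algebra, replacing the $p$-adic $L$-function input behind Greenberg's \cite[Theorem~2]{Gre89} by a global duality argument. Write $\Lambda$ now for the relevant Iwasawa algebra $\Zp\llbracket\Gal(K_\infty/K)\rrbracket$, which is two-dimensional in the principal case. The backbone is the generalized Poitou--Tate exact sequence over $\Lambda$: splicing the global duality sequence for $A$ with the one for its Tate dual $A^\ast$ produces a perfect pairing of $\Lambda$-modules, semilinear for $\iota$ in one variable, between the discrete Selmer group $S_A(K_\infty)$ and a compact Selmer-type module $Y_{A^\ast}$ built from $A^\ast$. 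After Pontryagin duality this identifies $S_A(K_\infty)^\vee$ with $Y_{A^\ast}^\iota$ up to pseudo-isomorphism, so the entire problem becomes to identify $Y_{A^\ast}$, up to pseudo-isomorphism, with $S_{A^\ast}(K_\infty)^\vee$; the two can only differ through the local conditions imposed at $p$ and at the finitely many primes of bad reduction.

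First I would dispose of the primes $v \nmid p$: there the Greenberg (unramified) local condition for $A$ is carried by local Tate duality onto the corresponding condition for $A^\ast$, the discrepancy being a subquotient of $H^1$ of a local decomposition group, which a standard argument shows to be pseudo-null (finite, in the cyclotomic case). This is the routine part. The crux is the local condition at $p$. When $V$ is ordinary, its restriction to a decomposition group at $p$ carries a canonical filtration and the local condition for $A$ pairs with the complementary one for $A^\ast$, so the bookkeeping closes immediately. In the supersingular case there is no such filtration: the local condition must be cut out by Coleman-type maps --- Kobayashi's $\pm$-maps, or, for Sprung's chromatic Selmer groups, the entries of the logarithmic (``chromatic'') matrix --- and one must show that the resulting $\Lambda$-submodule of local Iwasawa cohomology is, up to pseudo-isomorphism, its own annihilator under the cup-product pairing. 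This is exactly where the theory of $\Gamma$-systems of Lai, Longhi, Tan and Trihan is used: the local cohomology groups along the finite layers of $K_\infty$ organize into a $\Gamma$-system, the axioms of that formalism impose a duality between a $\Gamma$-system and its $\iota$-dual, and feeding this back into the Poitou--Tate sequence turns the local self-duality at $p$ into the global functional equation.

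It then remains to (i) use the polarization of the abelian variety --- respectively, the self-duality up to twist of the elliptic curves attached to chromatic Selmer groups --- so that $Y_{A^\ast}$ and $S_{A^\ast}(K_\infty)^\vee$ are genuinely comparable modules; (ii) verify the $\Gamma$-system axioms in the present situation, namely cofinite generation of the layers, compatibility of the transition maps, and a control theorem matching the finite-level Selmer groups with the appropriate (co)invariants of the limit, all of which need care for a $\Zp^2$-extension, where $\Lambda$ is two-dimensional and ``pseudo-null'' no longer means ``finite''; and (iii) assemble the pieces into $S_A(K_\infty)^\vee \sim S_{A^\ast}(K_\infty)^{\vee,\iota}$.

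I expect the main obstacle to be (ii) together with the supersingular local analysis: showing that the local conditions at the supersingular primes of $K_\infty$ are genuinely interchanged under duality at the level of $\Lambda$-modules, not merely up to finite error, and that the $\Gamma$-system produced by Sprung's chromatic decomposition meets the hypotheses needed to run the Lai--Longhi--Tan--Trihan machinery --- in particular, because the chromatic Coleman maps need not be surjective, one must prove their cokernels are pseudo-null over the two-dimensional Iwasawa algebra.
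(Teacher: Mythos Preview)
The statement you are trying to prove is \emph{Conjecture}~\ref{Greenberg conjecture}, not a theorem: the paper does not give a proof of it, nor does it claim to. It is quoted from \cite[Section~8]{Gre89} as motivation, and the paper then establishes analogues of it in two specific settings (Theorems~A and~B), under restrictive hypotheses (supersingular reduction, polarized abelian variety, block anti-diagonal Frobenius, $p\nmid h_K$, torsion hypothesis on the dual Selmer group). There is therefore no ``paper's own proof'' to compare against.

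Your proposal does not prove the general conjecture either. What you have written is a strategy sketch for the special cases the paper actually treats, and even there the outline diverges from what the paper does. You frame the argument around a Poitou--Tate sequence producing a compact Selmer-type module $Y_{A^\ast}$ and then identifying it with $S_{A^\ast}(K_\infty)^\vee$ up to local discrepancies. The paper does not argue this way: it builds a $\Gamma$-system directly from the finite-level Selmer groups modulo their divisible parts, with Flach's generalized Cassels--Tate pairing supplying the perfect pairing $(\Gamma\text{-}4)$, and then applies the Lai--Longhi--Tan--Trihan machinery to the \emph{global} objects rather than only to the local cohomology at $p$. The decomposition into simple and non-simple parts is essential and absent from your sketch; the non-simple part is handled by the $\Gamma$-system formalism, while the simple part requires constructing an explicit map between the two Selmer groups via the polarization and a careful choice of Hodge-compatible symplectic basis of the Dieudonn\'e module (Section~\ref{sct:Abelian}). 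Your plan to show the chromatic local conditions are self-orthogonal is correct in spirit, but the paper achieves this by comparing Sprung's Coleman maps with the multi-signed ones (Proposition~\ref{prop:comparison}), not via a direct $\Gamma$-system argument on local cohomology.

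In short: there is nothing to review against, because the target statement is an open conjecture; and your proposal, read as an approach to the paper's actual theorems, misses the simple/non-simple dichotomy and misplaces where the $\Gamma$-system is applied.
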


The symbol $\sim$ denotes pseudo-isomorphisms, i.e., $\Lambda$-homomorphisms with finite kernels and cokernels. When $A=E[p^\infty]$ where $E$ is an elliptic curve over $\Q$ with good ordinary reduction or multiplicative reduction at $p$, conjecture \ref{Greenberg conjecture} can be deduced from \cite[Theorem 2]{Gre89}. Such a result should be seen as an algebraic analogue of the familiar functional equation $L_p(E,T) = (*) \cdot L_p(E,\frac{1}{1+T}-1)$ for the $p$-adic $L$-function associated to $E$ where $(*)$ is an explicit factor.

Over the years, algebraic functional equations have been proven for Selmer groups attached to many kinds of Galois representations over various extensions. Let $K$ be a number field unramified at $p$ and let $T$ be a $\Zp$-lattice with a continuous $\Gal (\overline{\Q}/\Q)$-action and let $K_\infty$ be a  $\Zp^d$-extension of $K$. In the case where $T$ is the $p$-adic Tate module of an elliptic curve over $\Q$ with supersingular reduction at primes above $p$ with $a_p=0$ and $K_\infty=\Q_\infty$ is the cyclotomic $\Zp$-extension of $\Q$, we can consider Kobayashi's $\pm$-Selmer groups $\Sel_\pm(E/\Q_\infty)$. In this setting, Kim \cite{Kim08} proved an analogue of conjecture \ref{Greenberg conjecture}, namely that $\Sel_\pm(E/\Q_\infty)^\vee \sim \Sel_\pm(E/\Q_\infty)^{\vee,\iota}$ by adapting Greenberg's technique. When $K_\infty$ is a $\Zp^d$-extension with $d\geq 2$, Kim constructed multi-signed Selmer groups $\Sel_{\vec{s}}(E/K_\infty)$ generalizing Kobayashi's plus and minus Selmer groups. In \cite{AL21}, Ahmed and Lim proved that $\Sel_{\vec{s}}(E/K_\infty)^\vee \sim \Sel_{\vec{s}}(E/K_\infty)^{\vee,\iota}$. Their result is actually more general, they allow mixed reduction type at the primes above $p$.

For example of results when $T$ does not come from an elliptic curve, one can look at the work of Lei and Ponsinet \cite{LP17}. Suppose that $T$ is crystalline at primes above $p$ and satisfies some technical conditions. Büyükboduk and Lei \cite{BL17} constructed multi-signed Selmer groups $\Sel_I(T/K_\infty)$ for $T$ over the cyclotomic $\Zp$-extension of $K$ by making use of $p$-adic Hodge theory. The functional algebraic equation for $\Sel_I(T/K_\infty)$ was then proven in \cite{LP17}.

Let now $A$ be an abelian variety defined over a number field $K$ with potentially ordinary reduction at every places of a finite ramification locus $S$. Let $K_\infty/K$ be a $\Zp^d$-extension and let $\Sel_{p^\infty}(A/K_\infty)$ be the $p^\infty$-Selmer group of $A$ over $K_\infty$ defined by means of flat cohomology \cite[Definition 4.1.1]{LLTT}. Suppose that $\Sel_{p^\infty}(A/K_\infty)^\vee$ is torsion over the Iwasawa algebra $\Zp \llbracket \Gal(K_\infty/K)\rrbracket$. Then, by \cite[Proposition 4.3.4]{LLTT}, $\Sel_{p^\infty}(A/K_\infty)^\vee \sim \Sel_{p^\infty}(A^t/K_\infty)^{\vee,\iota}$ where $A^t$ is the dual abelian variety. This result is a byproduct of a much more general theory developed in \cite{LLTT} that can be used to prove functional equation for Iwasawa modules as long as they are part of a \textit{$\Gamma$-system}.
In the introduction of \cite{AL21}, Ahmed and Lim ask whether or not the machinery of $\Gamma$-systems can be used to study non-ordinary motives with Hodge-Tate weights $0$ and $1$ over $\Zp^d$-extensions of number fields in a manner similar to what they did for supersingular elliptic curves. In this paper, we give a partial answer to this question. Partial in the sense that we restrict ourselves to $\Zp^2$-extension of quadratic imaginary fields, we do not treat the case of mixed reduction type and we only consider motives arising from abelian varieties satisfying additional conditions. 

To state our result, we need to introduce more notation. Let $K$ be an imaginary quadratic field where $(p)=\gp \gp^c$ splits. Let $K_\infty$ be the unique $\Zp^2$-extension of $K$ with Galois group $\Gamma \cong \Zp^2$. Let $A$ be a polarized abelian variety defined over $K$ of dimension $g$ with supersingular reduction at both primes over $p$. Let $\mathbb{D}_{\cris,\gp}(T)$ (resp. $\mathbb{D}_{\cris,\gp^c}(T)$) be the Dieudonné module of the $p$-adic Tate module of $A$ viewed as a representation of $G_{K_\gp}$ (resp. $G_{K_{\gp^c}}$). Let $\{v_{\gp,1},\ldots, v_{\gp,2g}\}$ be a basis of $\mathbb{D}_{\cris,\gp}(T)$ such that $\{v_{\gp,1},\ldots, v_{\gp,g}\}$ generates $\Fil^0\mathbb{D}_{\cris,\gp}(T)$. A basis with this property is called Hodge-compatible. The matrix of the Frobenius operator $C_{\varphi,\gp}$ acting on $\mathbb{D}_{\cris,\gp}(T)$ with respect to the chosen basis takes the form 
$$
C_{\varphi,\gp} = C_\gp \left[
\begin{array}{c|c}
I_{g} & 0 \\
\hline
0 & \frac{1}{p} I_{g}
\end{array}
\right]
$$
for some $C_\gp \in \mathrm{GL}_{2g}(\Zp)$ since the Hodge-Tate weights of $T$ are $0$ and $1$. Let $I$ be the subset $\{1,\ldots,2g\} \subseteq \{1,\ldots,4g\}$ and $I^c$ its complement. We can construct Selmer groups $\Sel_{I}(A[p^\infty]/K_\infty)$ and $\Sel_{I^c}(A^t[p^\infty]/K_\infty)$ (see section \ref{definition}) depending on $I$ and the choice of Hodge-compatible bases for both $\mathbb{D}_{\cris,\gp}(T)$ and $\mathbb{D}_{\cris,\gp^c}(T)$. We remark that both bases may be chosen independently, as no relation between them is required in the construction of the Selmer groups. 

\begin{lthm}
Suppose that $\Sel_I(A[p^\infty]/K_\infty)^\vee$ is torsion over $\Zp \llbracket \Gamma \rrbracket$. Suppose that the matrices $C_{\varphi,\gp}$ and $C_{\varphi,\gp^c}$ are block anti-diagonal and that $p$ does not divide the class number of $K$. Then,
$$
\Sel_I(A[p^\infty]/K_\infty)^\vee \sim \Sel_{I^c}(A^t[p^\infty]/K_\infty)^{\vee,\iota}.
$$
\end{lthm}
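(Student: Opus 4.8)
The strategy is to exhibit the signed Selmer groups as members of a $\Gamma$-system in the sense of \cite{LLTT} and then read the functional equation off the general formalism. Recall from Section~\ref{definition} that $\Sel_I(A[p^\infty]/K_\infty)$ is cut out of the global cohomology by local conditions which at $\gp$ and $\gp^c$ are the kernels of Coleman-type maps built from the Hodge-compatible bases of $\mathbb{D}_{\cris,\gp}(T)$, $\mathbb{D}_{\cris,\gp^c}(T)$ together with the Frobenius matrices $C_{\varphi,\gp}$, $C_{\varphi,\gp^c}$ and the subset $I$, while away from $p$ they are the usual flat conditions. The first step I would carry out is the local duality computation: under local Tate duality between $A[p^\infty]$ and the Tate module of $A^t$, the local condition at $\gp$ (resp.\ $\gp^c$) defining $\Sel_I(A)$ is the exact orthogonal complement of the one at $\gp$ (resp.\ $\gp^c$) defining $\Sel_{I^c}(A^t)$. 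This is where the block anti-diagonal hypothesis enters: the polarization furnishes a perfect pairing on the Dieudonné modules under which $\Fil^0$ is isotropic, and the anti-diagonal shape of $C_\varphi$ makes the $I$-part of the Coleman datum for $A$ pair trivially with the $I^c$-part for $A^t$ while the complementary pairing is perfect — the natural generalisation of the self-orthogonality of Kobayashi's $\pm$ and Sprung's chromatic conditions when $a_p=0$. Away from $p$ the flat conditions are automatically mutually orthogonal and, $p$ being odd, the archimedean places contribute nothing.

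The core of the argument is then to verify the axioms of a $\Gamma$-system for the pair of families $\{\Sel_I(A[p^\infty]/L)\}$ and $\{\Sel_{I^c}(A^t[p^\infty]/L)\}$, indexed by the sub-extensions $L$ of $K_\infty/K$ and equipped with corestriction maps. Concretely this means checking: (i) the Coleman maps at $\gp$ and $\gp^c$ are compatible with corestriction, so the local conditions descend coherently to every $\mathbf{Z}_p$-subextension of $K_\infty/K$; (ii) the $\Zp\llbracket\Gamma\rrbracket$-cotorsion hypothesis propagates, via a control theorem, to the Selmer group over each such $\mathbf{Z}_p$-subextension; and (iii) over each $\mathbf{Z}_p$-subextension the pair $(\Sel_I(A),\Sel_{I^c}(A^t))$ satisfies the algebraic functional equation. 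Step (iii) is proved exactly as in Greenberg and Kim by running the Poitou--Tate exact sequence over the relevant $\mathbf{Z}_p$-extension: the cokernel of the global-to-local map is the Pontryagin dual of the Selmer group with the orthogonal-complement local conditions, which by the first step is precisely the $I^c$-Selmer group of $A^t$ (whose dual is thereby also seen to be torsion), and over the one-variable Iwasawa algebra this duality becomes a genuine pseudo-isomorphism after twisting by $\iota$. The hypothesis $p\nmid h_K$ is used here and in (ii): it forces $K_\infty/K$ to be unramified outside $\gp,\gp^c$, kills the $p$-part of the class groups in the tower, keeps the fine Selmer groups and the $H^0$, $H^2$ error terms finite, and thereby makes the control maps pseudo-injective with pseudo-null cokernel.

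Granting the $\Gamma$-system structure, the functional equation
$$
\Sel_I(A[p^\infty]/K_\infty)^\vee \;\sim\; \Sel_{I^c}(A^t[p^\infty]/K_\infty)^{\vee,\iota}
$$
is the output of the machinery of \cite{LLTT}: the general functional equation for $\Gamma$-systems — of which \cite[Proposition~4.3.4]{LLTT} is the ordinary instance — uses only the $\Gamma$-system axioms together with the $\mathbf{Z}_p$-level functional equations, not the ordinarity of the local conditions, so it applies verbatim and finishes the proof.

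I expect steps (i) and (ii) to be the real obstacle. In \cite{LLTT} the $\Gamma$-system structure is established for Greenberg local conditions, where the local cohomology at $p$ decomposes transparently along the tower; for the supersingular conditions the local terms are twisted by the Frobenius, and it is not a priori clear that corestriction between $\mathbf{Z}_p$-extensions of different slopes inside $K_\infty$ respects the Coleman-defined conditions. Obtaining descriptions of the images and kernels of the Coleman maps that are uniform enough across all of these subextensions — which is exactly what the block anti-diagonal hypothesis buys, beyond its role in the local duality — is the crux; once that is in place, the remaining verifications are bookkeeping with control theorems and Poitou--Tate duality.
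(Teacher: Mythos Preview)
Your proposal has the right instinct --- use the $\Gamma$-system formalism of \cite{LLTT} --- but misidentifies both the structure of a $\Gamma$-system and what that machinery actually delivers, and in consequence misses the second half of the proof.

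First, a $\Gamma$-system in \cite{LLTT} is a family $\{\mathfrak{a}_n,\mathfrak{b}_n,\langle\,,\,\rangle_n\}$ of \emph{finite} abelian groups indexed by the finite layers $K_n=K_\infty^{\Gamma^{p^n}}$, paired by a perfect $\Qp/\Zp$-valued pairing; it is not a family of Selmer groups over the $\Zp$-subextensions of $K_\infty/K$ linked by one-variable functional equations. In the paper the $\Gamma$-system is built from $\mathfrak{a}_n=\Sel_I^{\mathrm{str}}(A^t[p^\infty]/K_n)_{/\mathrm{div}}$ and $\mathfrak{b}_n=\Sel_{I^c}^{\mathrm{str}}(A[p^\infty]/K_n)_{/\mathrm{div}}$, with Flach's generalized Cassels--Tate pairing; your steps (ii)--(iii) about control to $\Zp$-subextensions and Poitou--Tate over each one are not part of the argument. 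Also, the orthogonality of the $I$- and $I^c$-conditions under local Tate duality holds for \emph{any} $I_\gq$ and does not use the anti-diagonal hypothesis; it follows directly from the explicit reciprocity law for the two-variable regulator.

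Second --- and this is the real gap --- the $\Gamma$-system machinery (specifically \cite[Corollary~3.3.4]{LLTT}) only yields $[\mathfrak{a}']_{\mathrm{ns}}^\iota=[\mathfrak{b}']_{\mathrm{ns}}$, i.e.\ equality of the \emph{non-simple} parts of the characteristic modules. The simple parts require an independent argument: one must produce a $\Lambda(\Gamma)$-homomorphism $\Sel_I(A[p^\infty]/K_\infty)\to\Sel_{I^c}(A^t[p^\infty]/K_\infty)$ with kernel and cokernel killed by an integer, and then invoke \cite[Corollary~4.3.2]{LLTT}. This map is induced by the polarization $\alpha:A\to A^t$, and the point is to check that $\alpha^\ast$ carries $\ker\col_{T,I_\gq}^\gq$ into $\ker\col_{T^\ast(1),I_\gq^c}^\gq$. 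It is \emph{here} that the block anti-diagonal hypothesis on $C_{\varphi,\gq}$ is used: after passing to a Hodge-compatible symplectic basis, it forces $M_{T,\gq}$ and $M_{T^\ast(1),\gq}$ to be block anti-diagonal, so that the comparison $D\cdot\col_T^\gq=B\cdot\col_{T^\ast(1)}^\gq\circ\alpha^\ast$ has $D$ block diagonal, whence the $I$-block of one kernel matches the $I^c$-block of the other. Without this step the simple parts are not handled at all, and \cite[Proposition~4.3.4]{LLTT} does not apply ``verbatim'': that proposition is precisely the combination of the $\Gamma$-system output for non-simple parts with the isogeny-induced map for simple parts, and the latter is the part you have to supply.
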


Let us briefly explain the steps of the proof. Since we assume $\Sel_I(A[p^\infty]/K_\infty)^\vee$ to be torsion, there is a pseudo-isomorphism
$$
\Sel_I(A[p^\infty]/K_\infty)^\vee \to \bigoplus_{i=1}^m \Zp \llbracket \Gamma \rrbracket /(\xi_i^{r_i})
$$
where each $\xi_i$ is irreducible and $r_i$ are non-negative integers. Let
$$
[\Sel_I(A[p^\infty]/K_\infty)^\vee]\colonequals \bigoplus_{i=1}^m \Zp \llbracket \Gamma \rrbracket /(\xi_i^{r_i})
$$
and define $[\Sel_I(A[p^\infty]/K_\infty)^\vee]_\text{si}$ as the sum over the $\xi_i$ which are simple element (to be defined in section \ref{gamma-systems}) and $[\Sel_I(A[p^\infty]/K_\infty)^\vee]_\text{ns}$ as its complement. The proof of theorem A goes in two steps. We first show that $[\Sel_I(A[p^\infty]/K_\infty)^\vee]_\text{ns} = [\Sel_{I^c}(A^t[p^\infty]/K_\infty)^\vee]_\text{ns}^\iota$ (theorem \ref{simple parts}) by using the machinery of $\Gamma$-system as in \cite{LLTT} and \cite{AL21}. For the second step, namely showing that $[\Sel_I(A[p^\infty]/K_\infty)^\vee]_\text{si} = [\Sel_{I^c}(A^t[p^\infty]/K_\infty)^\vee]_\text{si}^\iota$ (see corollary \ref{simple parts}), we can not use the arguments of \cite{AL21} since unlike elliptic curves, abelian varieties are not necessarily self-dual. To circumvent this problem, the main difficulty will be to construct a $\Zp \llbracket \Gamma \rrbracket$-homomorphism $$\Sel_I(A[p^\infty]/K_\infty)\to\Sel_{I^c}(A^t[p^\infty]/K_\infty)$$ in order to put ourselves in a situation where we can apply \cite[Corollary 4.3.2]{LLTT}. To do that, we need to analyze the behaviour of the basis of $\mathbb{D}_{\cris,\gp}(T)$ when we hit it with an isogeny $\alpha:A \to A^t$. This is done in section \ref{sct:Abelian}.

Finally, we show the functional equation in the case when $A=E$ is an elliptic curve over $K$ with supersingular reduction at both primes above $p$ and the Selmer groups considered are Sprung's $\sharp/\flat$-$\sharp/\flat$ Selmer groups. Since for elliptic curves $E^t=E$, we are able to mostly follow the argument in \cite{AL21}. However, one still need to show that the local conditions defining those Selmer groups are their own orthogonal complement with respect to the Tate pairing in order to use the theory of $\Gamma$-systems. This is done via a comparison theorem relating multi-signed Coleman maps to $\sharp/\flat$-Coleman maps. We obtain:

\begin{lthm}
Let $\star,\circ \in \{\sharp,\flat\}$ and let $\Sel^{\star\circ}(E/K_\infty)$ be the chromatic Selmer group defined in section \ref{chromatic}. Then,
$$
\Sel^{\star\circ}(E/K_\infty)^\vee \sim \Sel^{\star\circ}(E/K_\infty)^{\vee,\iota}.
$$
\end{lthm}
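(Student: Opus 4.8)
The plan is to realize $\Sel^{\star\circ}(E/K_\infty)$ as one of the Selmer groups cut out by local conditions at $\gp$ and $\gp^c$ to which the $\Gamma$-system formalism of \cite{LLTT} applies, and then to run the same two-step argument as for Theorem~A. Since $E$ is self-dual ($E^t=E$ via the Weil pairing), the delicate auxiliary $\Zp\llbracket\Gamma\rrbracket$-homomorphism needed in Theorem~A is unnecessary here: the cup-product pairing already supplies a canonical map $\Sel^{\star\circ}(E/K_\infty)\to\Sel^{\star\circ}(E/K_\infty)$ once the local conditions are self-orthogonal, and the argument of \cite{AL21} then transfers almost verbatim. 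So the theorem reduces to the claim that the local condition at $\gp$ defining $\Sel^{\star\circ}(E/K_\infty)$ is its own exact orthogonal complement under the pairing on $H^1(K_{\infty,\gp},-)$ built from local Tate duality at each finite layer and the Weil pairing $T_pE\otimes T_pE\to\Zp(1)$, and likewise at $\gp^c$. As a preliminary one checks the $\Gamma$-system finiteness and control axioms for $\Sel^{\star\circ}(E/K_\infty)$, following \cite{LLTT, AL21}.

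The self-orthogonality is the crux, and I would prove it via a comparison theorem between Coleman maps over the local $\Zp^2$-extension $K_{\infty,\gp}/K_\gp$. On one side there are the multi-signed Coleman maps $\col_{\gp,1},\col_{\gp,2}$ attached to a Hodge-compatible basis of $\mathbb{D}_{\cris,\gp}(T_pE)$; on the other, Sprung's $\sharp/\flat$-Coleman maps $\col^\sharp_\gp,\col^\flat_\gp$, the chromatic condition being the kernel of $\col^\star_\gp$ on $\HIw(K_{\infty,\gp},T_pE)$ together with its Pontryagin-dual counterpart on $H^1(K_{\infty,\gp},E[p^\infty])$. I would first establish an identity $(\col^\sharp_\gp,\col^\flat_\gp)^{\mathrm{t}}=M_\gp\cdot(\col_{\gp,1},\col_{\gp,2})^{\mathrm{t}}$ for an explicit matrix $M_\gp$ built from Sprung's two-variable half-logarithms and invertible over the fraction field of $\Zp\llbracket\Gamma\rrbracket$ --- the two-variable analogue of the cyclotomic comparison results of Sprung and of B\"uy\"ukboduk--Lei. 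The reason to pass through the $\sharp/\flat$-maps is that, unlike the multi-signed conditions whose orthogonal complements interchange the index set $I$ with $I^c$ (the reason Theorem~A must assume $C_{\varphi,\gp}$ block anti-diagonal), the $\sharp$ and the $\flat$ conditions are \emph{each} their own orthogonal complement, just as Kobayashi's $\pm$-conditions are, the self-duality exploited by Kim \cite{Kim08}. Concretely I would check that $M_\gp$ intertwines the Tate pairings and is compatible with $\iota$ up to a simple element, so that the self-orthogonality of $\ker\col^\sharp_\gp$ and of $\ker\col^\flat_\gp$ over $K_{\infty,\gp}$ follows from the corresponding cyclotomic fact for Sprung's conditions together with the local duality computations already used for Theorem~A.

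Granting the self-orthogonality at both $\gp$ and $\gp^c$, the functional equation follows: $\Sel^{\star\circ}(E/K_\infty)$ and its Pontryagin dual fit into a $\Gamma$-system of the kind treated in \cite[\S4.3]{LLTT} with matching local conditions, and the two-step argument --- "non-simple parts" by \cite[Corollary 4.3.2]{LLTT} and \cite{AL21}, "simple parts" by the self-duality map --- gives $\Sel^{\star\circ}(E/K_\infty)^\vee\sim\Sel^{\star\circ}(E/K_\infty)^{\vee,\iota}$.

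I expect the main obstacle to be the comparison theorem and its two-variable bookkeeping. In the $\Zp^2$-tower there are two logarithmic directions at each prime above $p$, so Sprung's $\sharp/\flat$-splitting must be set up as a genuine two-variable factorization of the Perrin-Riou regulator and shown compatible with the multi-signed factorization in both the cyclotomic and the anticyclotomic direction; tracking which half-logarithm enters each entry of $M_\gp$, and verifying that $M_\gp$ respects the Tate pairings exactly rather than up to an unspecified unit, is where the real work lies. A more routine secondary point is checking the $\Gamma$-system finiteness axioms for these chromatic Selmer groups here --- the control theorems and the behaviour at simple height-one primes --- which amounts to reproving, without the mixed-reduction generality, the local--global compatibilities underlying \cite{LLTT} and \cite{AL21}.
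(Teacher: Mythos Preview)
Your overall architecture is right --- reduce to self-orthogonality of the chromatic local conditions, prove that via a comparison with the multi-signed Coleman maps, and then run the $\Gamma$-system machinery --- and this is exactly the route the paper takes. But you overcomplicate two of the three steps.

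For the comparison, you anticipate a transition matrix $M_\gp$ built from two-variable half-logarithms and invertible only over the fraction field. The paper instead chooses the specific Hodge-compatible basis $\{\omega,-\varphi(\omega)\}$ of $\mathbb{D}_{\cris,\gp}(T_pE)$ and proves the \emph{exact} identity $\col_{\mathrm{chr}}^{\mathrm{t}}=-\col_T^{k_\infty}$, i.e.\ $M_\gp=-I_2$. The point is that with this basis one computes $M_T=\mathcal{H}^{\mathrm{t}}$ on the nose, and the interpolation formulas for Sprung's pairing and for the Loeffler--Zerbes regulator at finite-order characters match up to the single factor $C_\varphi^{n+1}$. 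There is no second logarithmic direction to worry about: the logarithmic matrix depends only on the cyclotomic variable, the unramified variable entering uniformly through the Yager period $\Omega_{\Qp}$. Self-orthogonality of $\ker\col^\sharp$ and $\ker\col^\flat$ then follows immediately from Lemma~\ref{orthogonal}, because for this basis the dual basis with respect to $[\cdot,\cdot]$ is $\{\varphi(\omega)/\delta,\omega/\delta\}$ --- a swap --- so $\ker\col_{T,\{1\}}$ is its own orthogonal complement. You do not need to import any cyclotomic fact for Sprung's conditions.

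For the simple parts, you propose to use a self-duality map as in \cite[Corollary~4.3.2]{LLTT}. This is unnecessary: since $E=E^t$ the $\Gamma$-system has $\mathfrak{a}_n=\mathfrak{b}_n$, hence $\mathfrak{a}=\mathfrak{b}$, and the equality $[X]_{\mathrm{si}}^\iota=[X]_{\mathrm{si}}$ is the content of Lemma~\ref{involution simple}, valid for \emph{any} torsion $\Lambda(\Gamma)$-module. (Incidentally, \cite[Corollary~4.3.2]{LLTT} is the simple-part argument, not the non-simple one; the non-simple equality comes from \cite[Corollary~3.3.4]{LLTT}.) So once self-orthogonality is in hand, the non-simple argument of Section~\ref{sct:Non-simple} runs verbatim with $\mathfrak{a}_n=\mathfrak{b}_n=\Sel^{\star\circ}(E/K_n)_{/\mathrm{div}}$, and the simple parts are automatic.
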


\noindent \textbf{Acknowledgements} The author thanks his Ph.D. advisor Antonio Lei for suggesting the problem and for his continuous support while investigating this topic. The author is also grateful to Jeffrey Hatley and Eyal Goren for answering our questions. The author also thanks Meng Fai Lim for his suggestions. The author also extends his thanks to Anthony Doyon for many helpful discussions. Finally, many thanks to Luochen Zhao for pointing out an inaccuracy in the original proof of lemma \ref{Coleman inverse limit} and to the referee for many constructive comments that helped improve the quality of the paper. The author's research is supported by the Canada Graduate Scholarships – Doctoral program from the Natural Sciences and Engineering Research Council of Canada.

\section{Preliminaries}

\subsection{Global and local setup}

Let $p \geq 3$ be a prime number and $K$ be an imaginary quadratic field where $(p)=\mathfrak{p}\mathfrak{p}^c$ splits with ring of integers $\mathcal{O}_K$. Here, $c$ is the complex conjugation. We will always use the symbol $\mathfrak{q}$ to mean an element of $\{\mathfrak{p},\mathfrak{p}^c \}$. Let $K_\infty$ be the unique $\mathbf{Z}_p^2$-extension of $K$ and $\Gamma:= \text{Gal}(K_\infty /K) \cong \mathbf{Z}_p^2$. Let $\Gamma_n \colonequals \Gamma^{p^n}$ and $K_n \colonequals K_\infty^{\Gamma_n}$. We write $K^\text{cyc}/K$ and $K^\text{ac}/K$ for the cyclotomic and anticyclotomic $\mathbf{Z}_p$-extensions contained in $K_\infty$ respectively. Note that $K_\infty$ is the compositum $K^\text{cyc}K^\text{ac}$. Write $\Gamma^\text{cyc} := \text{Gal}(K^\text{cyc}/K)$ and $\Gamma^\text{ac}:= \text{Gal}(K^\text{ac}/K)$. Let $\mu_{p^n}$ denote the set of $p^n$th roots of unity, $\mu_{p^\infty} \colonequals \bigcup_{n \geq 1} \mu_{p^n}$ and $\zeta_{p^n}$ be a primitive $p^n$th root of unity. We also let $\Gamma_0^\text{cyc}:= \text{Gal}(K(\mu_{p^\infty})/K) \cong \Gamma^\text{cyc} \times \Delta$ where $\Delta$ is a cyclic group of order $p-1$. Let $\chi$ denote the $p$-adic cyclotomic character. Let $F_\infty$ be the unramified $\mathbf{Z}_p$-extension of $\mathbf{Q}_p$, $U=\text{Gal}(F_\infty/\mathbf{Q}_p)$ and let $\widehat{F_\infty}$ denote the completion of $F_\infty$. Let $k_\infty$ be the compositum of $F_\infty$ with the cyclotomic $\mathbf{Z}_p$-extension of $\mathbf{Q}_p$ with Galois group denoted by $\Gamma_p \colonequals \Gal (k_\infty /\Qp)$. Write $\Gamma_{p,n}\colonequals (\Gamma_p)^{p^n}$ and define $k_n$ to be $k_\infty^{\Gamma_{p,n}}$. If $F$ is a finite unramified extension of $\Qp$, we write $F^\mathrm{cyc}$ for its cyclotomic $\Zp$-extension. We shall also write $\Gamma^\mathrm{cyc}_F$ (resp. $\Gamma^\mathrm{cyc}_{F,0}$) for $\Gal(F^\mathrm{cyc}/F)$ (resp. $\Gal(F(\mu_{p^\infty})/F)$). If $L$ is a local field or a number field, write $G_L$ for its absolute Galois group $\text{Gal}(\overline{L}/L)$ where $\overline{L}$ is an algebraic closure of $L$. If $\theta$ is a finite order character of $\Gamma_p$ with cyclotomic conductor $p^{n+1}$, we let $\tau(\theta) \colonequals \sum_{\sigma \in \Gal(\Qp(\mu_{p^{n+1}})/\Qp)}\theta(\sigma)\zeta_{p^{n+1}}^\sigma$ denote its Gauss sum. Let $\mathbf{C}_p$ denote the completion of an algebraic closure of $\Qp$. \bigbreak

For a profinite group $P$, we denote by $\Lambda(P)$ the Iwasawa algebra $\varprojlim_{B} \mathbf{Z}_p[P/B]$ where $B$ runs over the normal open subgroups of $P$. Let $\mathcal{H}(\Gamma_0^\text{cyc})$ be the set of power series
$$
\sum_{n\geq 0, \sigma \in \Delta} c_{n,\sigma}\cdot \sigma \cdot (\gamma_0 -1 )^n
$$
with coefficients in $\mathbf{Q}_p$ such that $\sum_{n\geq 0}c_{n,\sigma}X^n$ converges on the open unit disk for all $\sigma \in \Delta$. Here, $\gamma_0$ is a topological generator of $\Gamma^{\mathrm{cyc}}$. Let $\mathcal{H}(\Gamma^\text{cyc})$ be the set of power series
$$
\sum_{n\geq 0} c_{n}\cdot (\gamma_0 -1 )^n
$$
with coefficients in $\mathbf{Q}_p$ such that $\sum_{n\geq 0}c_{n}X^n$ converges on the open unit disk. Let $\gamma_1$ be a topological generator of $U$. Similarly, $\mathcal{H}_{\widehat{F}_\infty}(\Gamma_p)$ is the set of power series in $\gamma_0-1$ and $\gamma_1-1$ with coefficients in $\widehat{F}_\infty$ converging on the closed balls with radius smaller or equal to $r$ for all $r < 1$. We shall identify $\Lambda (\Gamma^\text{cyc})$ with $\Zp \llbracket X \rrbracket$ by sending $\gamma_0-1$ to the variable $X$.\bigbreak

Let $\mathcal{M}_{/K}$ be a motive defined over $K$ in the sense of \cite{FPR94} and $\mathcal{M}_p$ its $p$-adic realization. Let $T$ be a $G_K$-stable $\mathbf{Z}_p$-lattice inside $\mathcal{M}_p$. If $L$ is a local field and $L^\prime /L$ is a $p$-adic Lie extension, we write $H^1_\text{Iw}(L^\prime,\bullet)$ for the first Iwasawa cohomology group $\varprojlim_{L^{\prime\prime}} H^1(L^{\prime\prime},\bullet)$ where $L^{\prime\prime}$ runs through all finite subextension of $L^\prime /L$. We shall denote by $T^\dagger \colonequals \text{Hom}(T,\mu_{p^\infty})$ the Cartier dual of $T$ and by $T^\ast(1) \colonequals \text{Hom}(T,\mathbf{Z}_p(1))$ the Tate dual of $T$. Let $M$ be any $\Lambda(\Gamma_0^\mathrm{cyc})$-module and $\eta$ be a Dirichlet character modulo $p-1$. Write $e_\eta \colonequals \frac{1}{p-1}\sum_{\sigma \in \Delta} \eta(\sigma)^{-1}\cdot \sigma \in \Zp[\Delta]$ for the idempotent corresponding to $\eta$. The $\eta$-isotypic component of $M$ is defined to be $e_\eta \cdot M$ and is denoted by $M^\eta$. Then $M^\eta$ admits the structure of a $\Lambda(\Gamma^\mathrm{cyc})$-module. If $\eta$ is the trivial character modulo $p-1$, we denote $M^\eta$ by $M^\Delta$.

\subsection{Dieudonné modules}

Let $\mathbf{A}_{\mathbf{Q}_p}^+ \colonequals \mathbf{Z}_p \llbracket \pi \rrbracket$ where $\pi$ is a formal variable. The ring $\mathbf{A}_{\mathbf{Q}_p}^+$ is equipped with a Frobenius action $\varphi: \pi \to (1+\pi)^p-1$ and a $\Gamma_0^\text{cyc}$-action $\sigma: \pi \to (1+\pi)^{\chi(\sigma)}-1$. Let $\mathbf{A}_{\mathbf{Q}_p}$ be the $p$-adic completion of $\mathbf{Z}_p \llbracket \pi \rrbracket [\pi^{-1}]$. Let
$$
\widetilde{\mathbf{E}} \colonequals \varprojlim_{x \mapsto x^p} \mathbf{C}_p
$$
which is a field of characteristic $p$. Define $\widetilde{\mathbf{A}}$ to be the ring of Witt vectors with coefficient in $\widetilde{\mathbf{E}}$ and $\widetilde{\mathbf{B}}$ to be $\widetilde{\mathbf{A}}[1/p]$. Let $\mathbf{B}$ be the completion with respect to the $p$-adic topology of the maximal unramified extension of the field $\mathbf{A}_{\Qp}$ inside $\widetilde{\mathbf{B}}$. Finally, we let $\mathbf{A} \colonequals \mathbf{B} \cap \widetilde{\mathbf{A}}$. For more details on period rings, see \cite[Section I.2]{Be04}.

Suppose that \bigbreak
\noindent \textbf{(H.crys)} $\mathcal{M}_p$ is crystalline at $\mathfrak{p}$ and $\mathfrak{p}^c$. \bigbreak

\noindent Let $d:= \text{dim}_{\mathbf{Q}_p}(\text{Ind}_{K/\mathbf{Q}}\mathcal{M}_p)$ and let $d_\pm := \text{dim}_{\mathbf{Q}_p}(\text{Ind}_{K/\mathbf{Q}}\mathcal{M}_p)^{c=\pm 1}$. Let $\mathbb{D}_{\cris}(\mathcal{M}_p)$ be $(\mathbb{B}_\text{cris}\otimes_{\mathbf{Q}_p} \mathcal{M}_p)^{G_{\mathbf{Q}_p}}$ where $\mathbb{B}_\text{cris}$ is the crystalline period ring defined by Fontaine. It admits the structure of a filtered $\varphi$-module. Write $\mathbb{D}(T)$ for the Dieudonné module $(\mathbf{A} \otimes_{\mathbf{Z}_p} T)^H$ where $H$ is the kernel of the cyclotomic character $G_{\mathbf{Q}_p} \to \mathbf{Z}_p^\times$. The module $\mathbb{D}(T)$ is a free $\mathbf{A}_{\mathbf{Q}_p}$-module of rank $d$ equipped with a Frobenius and an action of $\Gamma_0^\text{cyc}$. Let $\mathbb{N}(T)$ be the Wach module of $T$ whose existence and properties are shown in \cite[prop. 2.1.1]{Be04}. It is a free $\mathbf{A}_{\mathbf{Q}_p}^+$-module of rank $d$ and a $\mathbf{A}_{\mathbf{Q}_p}^+$-submodule of $\mathbb{D}(T)$. The Wach module $\mathbb{N}(T)$ is stable under the action of $\Gamma_0^\text{cyc}$ and is also stable under $\varphi$ provided that the Hodge-Tate weights of $\mathcal{M}_p$ are smaller or equal to $0$. Write $\psi$ for a left inverse of $\varphi$. If $\mathcal{M}_p$ has nonnegative Hodge-Tate weights and no quotient isomorphic to $\Qp$, then $\mathbb{N}(T)^{\psi=1}=\mathbb{D}(T)^{\psi=1}$. Furthermore, the quotient $\mathbb{N}(T)/\pi \mathbb{N}(T)$ is identified with a $\mathbf{Z}_p$-lattice of $\mathbb{D}_\text{cris}(\mathcal{M}_p)$. We denote by $\mathbb{D}_\text{cris}(T)$ this $\mathbf{Z}_p$-lattice. It is equipped with a filtration of $\mathbf{Z}_p$-modules $\{ \Fil^i \mathbb{D}_\text{cris}(T) \}_{i \in \mathbf{Z}}$ and a Frobenius operator $\varphi$. If we suppose that \bigbreak
\noindent \textbf{(H.HT)} the Hodge-Tate weights of $\mathcal{M}_p$ are either $0$ or $1$,\bigbreak
\noindent the filtration takes the form
$$
\Fil^i \mathbb{D}_\text{cris}(T) = \begin{cases} 0 & \text{if $i \geq 1$,} \\ \mathbb{D}_\text{cris}(T) & \text{if $i \leq -1$.} \end{cases}
$$
Note that $\mathbb{D}_\text{cris}(\mathcal{M}_p) = \mathbb{D}_\text{cris}(T) \otimes_{\mathbf{Z}_p} \mathbf{Q}_p$. We also make the following assumptions: \bigbreak
\noindent \textbf{(H.Frob)} The slopes of the Frobenius on the Dieudonné module $\mathbb{D}_\text{cris}(\mathcal{M}_p)$ lie inside $(0,-1]$ and that $1$ is not an eigenvalue; \\
\textbf{(H.P)} $d_+ = d_-$ and $\text{dim}_{\mathbf{Q}_p} \text{Fil}^0 \mathbb{D}_\text{cris}(\mathcal{M}_p)=d_-$. \bigbreak
From now on, we shall always assume that \textbf{(H.crys)}, \textbf{(H.HT)}, \textbf{(H.Frob)} and \textbf{(H.P)} are valid. In particular, \textbf{(H.P.)} implies that $d$ is even and that $\Fil^0 \mathbb{D}_\text{cris}(T)$ is of rank $d/2$ over $\Zp$. Choose a $\mathbf{Z}_p$-basis $\{v_1,\ldots,v_d \}$ of $\mathbb{D}_\text{cris}(T)$ such that $\{ v_1,\ldots, v_{d_-} \}$ is a $\mathbf{Z}_p$-basis of the submodule $\Fil^0\mathbb{D}_\text{cris}(T)$. Such a basis is called Hodge-compatible. The matrix of $\varphi$ with respect to this basis is of the form
$$
C_\varphi = C\left[
\begin{array}{c|c}
I_{d_-} & 0 \\
\hline
0 & \frac{1}{p} I_{d_+}
\end{array}
\right]
$$
for some $C \in \text{GL}_d(\mathbf{Z}_p)$ and where $I_n$ is the identity $n \times n$ matrix. There is a natural pairing
$$
[\cdot,\cdot]:\mathbb{D}_\text{cris}(T) \times \mathbb{D}_\text{cris}(T^\ast(1)) \to \mathbb{D}_\text{cris}(\mathbf{Z}_p(1)) \cong \mathbf{Z}_p
$$
with respect to which $\text{Fil}^i\mathbb{D}_\text{cris}(T^\ast(1))$ is the orthogonal complement of $\text{Fil}^{-i}\mathbb{D}_\text{cris}(T)$ and $\varphi^{-1}$ is the adjoint of $p\varphi$. To see this, remark that by \cite[Example 9.1.12]{BC}, we have $\Dcris (\Zp (1)) = \Zp \cdot t^{-1}$ where $t=\log([\epsilon])$ is an element of $\mathbb{B}_{\mathrm{cris}}$ satisfying the relations $\varphi(t)=pt$ and $\sigma \cdot t = \chi(\sigma)t$ for all $\sigma \in G_{\Qp}$. Thus, 
\begin{align}\label{dual operator}
[\varphi(a\otimes x),b\otimes f] &= \varphi(a)b \cdot f(x) = \varphi( a\varphi^{-1}(b) \cdot f(x)) = ap^{-1}\varphi^{-1}(b)f(x)\\
&= [a\otimes x, p^{-1}\varphi^{-1}(b \otimes f)]. \nonumber
\end{align}

If $F$ is a finite unramified extension of $\Qp$, one can define both the Wach module $\mathbb{N}_F(T)$ and crystalline module $\Dcris(F,T)$ of $T$ over $F$ satisfying $\mathbb{N}_F(T) = \mathbb{N}(T) \otimes_{\Zp}\mathcal{O}_F$ and $\Dcris(F,T) = \Dcris(T)\otimes_{\Zp}\mathcal{O}_F$.

\section{Multi-signed Coleman maps}\label{Coleman}

In this section, we review the construction of one-variable multi-signed Selmer groups defined by Büyükboduk and Lei. For a more detailed description, see \cite{BL17}. After that, we define two-variable multi-signed Selmer groups in the spirit of \cite{BL21} as was done in \cite{DR21}. Lastly, we review orthogonality properties satisfied by local conditions used to define multi-signed Selmer groups following Ponsinet \cite{Pon20} and Lei--Ponsinet \cite{LP17}.

\subsection{Definition}\label{definition}

Let's recall the definition of one-variable Coleman maps from \cite{BL17} as it will be needed for the construction of the two-variable counterpart. Let $F/\Qp$ be a finite unramified extension of degree $r$, $F^\text{cyc}$ be the cyclotomic $\Zp$-extension of $F$ and $F^n$ be the unique subextension of $F^\text{cyc}$ with $[F^n:F]=p^n$. Let $\mathcal{M}_p$ be a $\Qp$-vector space of dimension $d$ with a continuous action of $G_{\Qp}$ satisfying the hypotheses of the previous section. Fix $T$ a $G_{\Qp}$-stable $\Zp$-lattice inside $\mathcal{M}_p$. For the rest of the paper, we fix $\{v_1,\ldots, v_{d}\}$ a Hodge-compatible $\Zp$-basis of $\Dcris (T)$ such that $\{v_1,\ldots,v_{d_-}\}$ is a basis of $\Fil^0 \Dcris (T)$. Write $\Dcris(F,T)=\Dcris (T)\otimes_{\Zp} \mathcal{O}_F$ for the Dieudonné module of $T$ seen as a representation of $G_F$. Then, $\{v_1,\ldots, v_{d}\}$ is a Hodge-compatible $\mathcal{O}_F$-basis of $\Dcris (F,T)$ and the matrix of $\varphi$ on $\Dcris (F,T)$ with respect to this basis is the same as the one described in the previous section. For $n \geq 1$, let $\Phi_{p^n}(1+X)$ be the cyclotomic polynomial $\sum_{i=0}^{p-1}(1+X)^{ip^{n-1}}$. Define the matrices
$$
C_n \colonequals \left[
\begin{array}{c|c}
I_{d_-} & 0 \\
\hline
0 & \Phi_{p^n}(1+X) I_{d_+}
\end{array}
\right]
C^{-1}
$$
and $M_n \colonequals (C_\varphi)^{n+1}C_n \cdots C_1$. In \cite[Proposition 2.5]{BL17}, it is shown that the sequence $\{ M_n \}_{n\geq 1}$ converges entry-wise with respect to the sup-norm topology on $\mathcal{H}(\Gamma^\text{cyc})$ to a $d \times d$ matrix with entries in $\mathcal{H}(\Gamma^\text{cyc})$ which we call $M_{T}$.

\begin{remark}
Let $\{w_1,\ldots,w_r\}$ be a $\Zp$-basis of $\mathcal{O}_F$. This gives rise to a $\Zp$-basis of $\Dcris (F,T)$, namely $\{v_i\otimes w_j : 0\leq i \leq d, 0\leq j \leq r \}$. The matrices $C_n$ defined in \cite{BL17} are $rd \times rd$ matrices since they work with $\Zp$-bases instead of $\mathcal{O}_F$-bases. 
\end{remark}

\begin{proposition}\label{determinant}
Up to a constant in $\mathbf{Z}_p^\times$, $\det M_T$ is equal to $\left( \frac{\log(1+X)}{pX} \right)^{d_+}$.
\end{proposition}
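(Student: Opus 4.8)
The plan is purely computational: compute $\det M_n$ in closed form for every $n$, simplify the resulting product of cyclotomic polynomials by telescoping, and then pass to the limit $n \to \infty$. To begin, I would record the determinants of the factors in $M_n = (C_\varphi)^{n+1} C_n \cdots C_1$. Since $C \in \mathrm{GL}_d(\mathbf{Z}_p)$ we have $\det C \in \mathbf{Z}_p^\times$; from the block form of $C_\varphi$ one gets $\det C_\varphi = p^{-d_+}\det C$, and from the definition of $C_n$ one gets $\det C_n = \Phi_{p^n}(1+X)^{d_+}(\det C)^{-1}$. Multiplying these, the powers of $\det C$ collapse to a single factor and
$$
\det M_n = (\det C)\, p^{-d_+(n+1)} \Bigl( \prod_{j=1}^{n} \Phi_{p^j}(1+X) \Bigr)^{d_+}.
$$

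Next I would use the identity $\Phi_{p^j}(1+X) = \dfrac{(1+X)^{p^j}-1}{(1+X)^{p^{j-1}}-1}$, which is immediate from $(Y-1)(1+Y+\cdots+Y^{p-1}) = Y^p - 1$ applied with $Y = (1+X)^{p^{j-1}}$. The product over $j = 1, \dots, n$ then telescopes to $\dfrac{(1+X)^{p^n}-1}{X}$, so
$$
\det M_n = (\det C) \left( \frac{(1+X)^{p^n}-1}{p^{n+1}X} \right)^{d_+} = (\det C)\left( \frac{1}{pX}\cdot\frac{(1+X)^{p^n}-1}{p^n} \right)^{d_+}.
$$
To finish, I would pass to the limit: by \cite[Proposition 2.5]{BL17} the entries of $M_n$ converge in $\mathcal{H}(\Gamma^{\mathrm{cyc}})$ to those of $M_T$, and since $\det$ is a polynomial expression in the entries and the ring operations on $\mathcal{H}(\Gamma^{\mathrm{cyc}})$ are continuous, $\det M_n \to \det M_T$. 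Combined with the standard fact that $\tfrac{(1+X)^{p^n}-1}{p^n} \to \log(1+X)$ in $\mathcal{H}(\Gamma^{\mathrm{cyc}})$ — expand $(1+X)^{p^n}-1 = \exp\!\bigl(p^n\log(1+X)\bigr) - 1$ and observe that the terms beyond the linear one carry positive powers of $p^n$ — this gives $\det M_T = (\det C)\bigl(\tfrac{\log(1+X)}{pX}\bigr)^{d_+}$, and $\det C \in \mathbf{Z}_p^\times$ yields the claim.

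I do not expect a real obstacle here: the computation is routine, and the only points needing a modicum of care are the interchange of the determinant with the entrywise limit and the convergence of $\tfrac{(1+X)^{p^n}-1}{p^n}$ to $\log(1+X)$, both of which amount to unwinding the topology on $\mathcal{H}(\Gamma^{\mathrm{cyc}})$ recalled in Section 2.
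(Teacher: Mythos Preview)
Your computation is correct and is exactly the natural argument: take determinants of the block factors, telescope the product of cyclotomic polynomials, and pass to the limit using continuity of $\det$ and the standard convergence $\tfrac{(1+X)^{p^n}-1}{p^n}\to\log(1+X)$. The paper itself does not give an argument but simply cites \cite[Proposition~2.5]{BL17}, where this computation is carried out; so your proposal supplies precisely the details that the paper defers to that reference.
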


\begin{proof}
See \cite[Proposition 2.5]{BL17}.
\end{proof}

Let $h^1_{\mathrm{Iw},T}: \mathbb{N}_F(T)^{\psi=1} \xrightarrow{\sim} H^1_\mathrm{Iw}(F(\mu_{p^\infty}),T)$ be Fontaine's isomorphism \cite[Proposition I.8]{Be03}. Let 
$$
\mathcal{L}_{T,F}:H^1_\mathrm{Iw}(F(\mu_{p^\infty}),T) \to \mathcal{H}(\Gamma_{F,0}^\text{cyc})\otimes_{\Zp} \Dcris(F,T)
$$
be Perrin-Riou's $p$-adic regulator defined as the composition $(\mathfrak{M}^{-1}\otimes 1)\circ (1-\varphi)\circ (h^1_{\mathrm{Iw},T})^{-1}$ where $\mathfrak{M}$ is the Mellin transform \cite[Definition 3.4]{LLZ11}. Then, by \cite[Theorem 2.13]{BL17} there exists a unique $\mathcal{O}_F \otimes_{\Zp}\Lambda(\Gamma_{F,0}^\text{cyc})$-homomorphism
$$
\col_{T,F}: H^1_\mathrm{Iw}(F(\mu_{p^\infty}),T) \to \mathcal{O}_F \otimes_{\Zp}\Lambda(\Gamma_{F,0}^\text{cyc})^{\oplus d}
$$
such that for all $z \in H^1_\mathrm{Iw}(F(\mu_{p^\infty}),T)$ we have the decomposition \cite[Corollary 2.14]{BL17}
\begin{equation}\label{decomposition une variable}
\mathcal{L}_{T,F}(z) = (v_1 \cdots v_d)\cdot M_T \cdot \col_{T,F}(z).
\end{equation}

As discussed in \cite[Section 2.7]{LZ14}, $h^1_{\mathrm{Iw},T}$ commutes with the action of the larger group $\Gal (F(\mu_{p^\infty})/\Qp)$. One can also see that $1-\varphi$ and $\mathfrak{M}^{-1}\otimes 1$ also commute with this action making $\mathcal{L}_{T,F}$ into a homomorphism of $\mathcal{O}_F \otimes_{\Zp}\Lambda ( \Gal (F(\mu_{p^\infty})/\Qp))$-modules. Thus, $\col_{T,F}$ also commutes with the action of $\Gal (F(\mu_{p^\infty})/\Qp)$.

\begin{remark}
\cite[Theorem 2.13]{BL17} gives a decomposition with respect to a $\Zp$-basis of $\Dcris (F,T)$. However, one may follow the same proof to show that the decomposition \eqref{decomposition une variable} with respect to our chosen $\mathcal{O}_F$-basis holds. To do this, replace $\Lambda_n$ by $\mathcal{O}_F \otimes_{\Zp} \Lambda_n$ in the argument of \textit{loc. cit.} and remark that, since $C_\varphi$ is defined over $\Qp$, the matrix representation of the powers $\varphi^{-n}$ of the semilinear operator $\varphi$ is the usual matrix product $C_\varphi^{-n}$.
\end{remark}

\begin{remark}
The construction of the logarithmic matrix $M_T$ and the Coleman map $\col_{T,F}$ depend on the choice of a Hodge-compatible $\Zp$-basis $\{v_1,\ldots,v_d\}$ of $\Dcris (T)$.
\end{remark}

Suppose that \bigbreak
\noindent \textbf{(H.F)} the group $H^0(F(\mu_{p^\infty}), T^\ast(1)^\dagger)$ is finite. \bigbreak
In particular, the groups $H^0(F(\mu_{p^n}),T)$ are trivial for all $n\geq 1$ since $T$ is isomorphic to $\varprojlim_n T^\ast(1)^\dagger [p^n]$. By \textbf{(H.F)} and inflation-restriction, we have the family of isomorphisms
$$
H^1(F^n,T) \cong H^1(F(\mu_{p^n}),T)^\Delta.
$$
Thus,
$$
H^1_\mathrm{Iw}(F^\mathrm{cyc},T) \cong H^1_\mathrm{Iw}(F(\mu_{p^\infty}),T)^\Delta.
$$

\begin{notation}
By abuse of notation, we also write the restriction $\mathcal{L}_{T,F} \lvert_{H^1_\mathrm{Iw}(F(\mu_{p^\infty}),T)^\Delta}$ as $\mathcal{L}_{T,F}$. With this convention, we get a regulator map
$$
\mathcal{L}_{T,F}:H^1_\mathrm{Iw}(F^\mathrm{cyc},T) \to \mathcal{H}(\Gamma_F^\text{cyc})\otimes_{\Zp} \Dcris(F,T)
$$
and Coleman map
$$
\col_{T,F}: H^1_\mathrm{Iw}(F^\mathrm{cyc},T) \to \mathcal{O}_F \otimes_{\Zp}\Lambda(\Gamma_F^\text{cyc})^{\oplus d}.
$$
For the rest of the paper, we will only consider the isotypic component of the trivial character of $\Delta$ for all $\Lambda(\Gamma^\mathrm{cyc}_{F,0})$-modules involved.
\end{notation}


We now extend this construction to the two-variable setting following \cite{DR21}. Fix $T$ a $G_{\Qp}$-stable $\mathbf{Z}_p$-lattice inside the $p$-adic realization $\mathcal{M}_p$ of a motive $\mathcal{M}_{/K}$. Let $S_{F_\infty / \mathbf{Q}_p} \subseteq \mathcal{O}_{\widehat{F_\infty}}\llbracket U \rrbracket$ be the Yager module in the sense of Loeffler--Zerbes \cite[Section 3.2]{LZ14}. It is a free $\Lambda( U )$-module of rank $1$. Fix $\{\Omega_{\mathbf{Q}_p}\}$ a basis of $S_{F_\infty / \mathbf{Q}_p}$. Define $\mathbb{N}_{F_\infty}(T)$ as the completed tensor product of the Wach module of $T$ with the Yager module: $\mathbb{N}(T)\widehat{\otimes}_{\Zp} S_{F_\infty/\Qp}$. Then, there is an isomorphism \cite[Proposition 4.5]{LZ14}
$$
h^1_{\infty,T}:\mathbb{N}_{F_\infty}(T)^{\psi=1} \to H^1_\mathrm{Iw}(F_\infty(\mu_{p^\infty}),T).
$$
Let $\mathcal{L}_{T,k_\infty}$ be the two-variable big logarithm map of Loeffler--Zerbes (also called the two-variable $p$-adic regulator) \cite[Definition 4.6]{LZ14}
$$
\mathcal{L}_{T,k_\infty}: H^1_\mathrm{Iw}(F_\infty(\mu_{p^\infty}),T) \to \Omega_{\mathbf{Q}_p} \cdot \left( \mathcal{H}(\Gamma_0^\text{cyc})\widehat{\otimes}\Lambda(U) \right) \otimes_{\mathbf{Z}_p} \mathbb{D}_\text{cris}(T)
$$
defined as the composition of $$(\varphi^\ast \mathbb{N}(T))^{\psi=0}\widehat{\otimes}_{\Zp}S_{F_\infty/\Qp} \to \Omega_{\mathbf{Q}_p} \cdot \left( \mathcal{H}(\Gamma_0^\text{cyc})\widehat{\otimes}\Lambda(U) \right) \otimes_{\mathbf{Z}_p} \mathbb{D}_\text{cris}(T)$$ with $(1-\varphi)\circ (h^1_{\infty,T})^{-1}$. Again, we take the isotypic component of the trivial character and see $\mathcal{L}_{T,k_\infty}$ as a map
$$
H^1_\mathrm{Iw}(k_\infty,T)\to \Omega_{\mathbf{Q}_p} \cdot \left( \mathcal{H}(\Gamma^\text{cyc})\widehat{\otimes}\Lambda(U) \right) \otimes_{\mathbf{Z}_p} \mathbb{D}_\text{cris}(T).
$$
Consider the extension $F_m \colonequals F_\infty^{U^{p^m}}$ between $\Qp$ and $k_\infty$. Let $G_m^\prime \colonequals \Gal (F_m^\text{cyc}/\Qp)$. Then, we have the projection map $\mathrm{proj}_m:\mathcal{H}_{\widehat{F}_\infty}(\Gamma_p) \to \mathcal{H}_{\widehat{F}_\infty}(G_m^\prime)$. For $z \in H^1_\mathrm{Iw}(k_\infty,T)$, let $z_m$ be its image under the corestriction map $H^1_\mathrm{Iw}(k_\infty,T) \to H^1_\mathrm{Iw}(F_m^\text{cyc},T)$.

\begin{proposition}\label{regulator inverse limit}
Let $z \in H^1_\mathrm{Iw}(k_\infty,T)$. We have $\mathrm{proj}_m \circ \mathcal{L}_{T,k_\infty}(z) = \mathcal{L}_{T,F_m}^{G_m^\prime}(z_m)$ where $\mathcal{L}_{T,F_m}^{G_m^\prime}$ is defined by
$$
\mathcal{L}_{T,F_m}^{G_m^\prime}(x) \colonequals \sum_{\sigma \in \Gal(F_m/\Qp)}[\sigma] \cdot \mathcal{L}_{T,F_m}(\sigma^{-1}\cdot x).
$$
\end{proposition}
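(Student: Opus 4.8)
The plan is to prove Proposition \ref{regulator inverse limit} by unwinding the definitions of $\mathcal{L}_{T,k_\infty}$ and $\mathcal{L}_{T,F_m}$ and reducing the asserted identity to the defining interpolation property of the Yager module $S_{F_\infty/\Qp}$ from \cite[Section 3.2]{LZ14}, together with the Galois-equivariance of Fontaine's isomorphisms. Recall that $\mathcal{L}_{T,k_\infty}$ is the composite of the theta map $\Theta_\infty\colon (\varphi^\ast\mathbb{N}(T))^{\psi=0}\widehat{\otimes}_{\Zp}S_{F_\infty/\Qp}\to \Omega_{\Qp}\cdot\bigl(\mathcal{H}(\Gamma^\mathrm{cyc})\widehat{\otimes}\Lambda(U)\bigr)\otimes_{\Zp}\Dcris(T)$ with $(1-\varphi)\circ(h^1_{\infty,T})^{-1}$, whereas $\mathcal{L}_{T,F_m}$ is the composite $(\mathfrak{M}^{-1}\otimes 1)\circ(1-\varphi)\circ(h^1_{\mathrm{Iw},T})^{-1}$ carried out over $F_m$. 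Since $1-\varphi$ commutes with all the Galois actions and with $\mathrm{proj}_m$, the statement will follow once one matches, first, Fontaine's two-variable isomorphism $h^1_{\infty,T}$ with the collection of one-variable isomorphisms $h^1_{\mathrm{Iw},T}$ over the layers $F_m$, and, second, the two-variable theta map $\Theta_\infty$ with the one-variable inverse Mellin transform $\mathfrak{M}^{-1}$, both under $\mathrm{proj}_m$ combined with the specialization of the Yager module at level $m$.

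For the first point, I would use the defining property of the Yager module: under the map $\mathcal{O}_{\widehat{F}_\infty}\llbracket U\rrbracket\to\mathcal{O}_{F_m}[\Gal(F_m/\Qp)]$ induced by $\Lambda(U)\to\Zp[\Gal(F_m/\Qp)]$, the generator $\Omega_{\Qp}$ of $S_{F_\infty/\Qp}$ is sent to $\sum_{\sigma\in\Gal(F_m/\Qp)}\sigma(a_m)\cdot[\sigma]$, where the $a_m$ form a norm-compatible system of normal integral basis generators of $\mathcal{O}_{F_m}/\Zp$ (such a system exists because $F_\infty/\Qp$ is unramified). Consequently $\mathbb{N}_{F_\infty}(T)=\mathbb{N}(T)\widehat{\otimes}_{\Zp}S_{F_\infty/\Qp}$ maps onto $\mathbb{N}_{F_m}(T)\otimes_{\Zp}\Zp[\Gal(F_m/\Qp)]$, and \cite[Proposition 4.5]{LZ14}, together with the $\Gal(F_\infty(\mu_{p^\infty})/\Qp)$-equivariance of $h^1_{\infty,T}$ and of $h^1_{\mathrm{Iw},T}$, yields a commutative square: after this projection, $(h^1_{\infty,T})^{-1}$ becomes $z\mapsto\sum_{\sigma\in\Gal(F_m/\Qp)}[\sigma]\otimes(h^1_{\mathrm{Iw},T})^{-1}(\sigma^{-1}\cdot\mathrm{cores}(z))$, the cohomological vertical map being the corestriction $H^1_\mathrm{Iw}(F_\infty(\mu_{p^\infty}),T)\to H^1_\mathrm{Iw}(F_m(\mu_{p^\infty}),T)$. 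Passing to the trivial-$\Delta$-isotypic components, which is legitimate by hypothesis \textbf{(H.F)} over each $F_m$ and identifies $H^1_\mathrm{Iw}(F_m^\mathrm{cyc},T)$ with $H^1_\mathrm{Iw}(F_m(\mu_{p^\infty}),T)^\Delta$, this replaces $z$ by $z_m=\mathrm{cores}(z)$ and its Galois translates.

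The second point is the combinatorial statement that $\mathrm{proj}_m\circ\Theta_\infty$, applied to an element of the form $\Omega_{\Qp}\otimes(\cdot)$, recovers $\mathfrak{M}^{-1}$ applied to the $F_m$-component once $\Omega_{\Qp}$ is replaced by $\sum_{\sigma}\sigma(a_m)\cdot[\sigma]$; this is where the weights $[\sigma]$ and the twist $\sigma^{-1}\cdot x$ in the definition of $\mathcal{L}_{T,F_m}^{G_m^\prime}$ are produced, because the semilinear $\Gal(F_m/\Qp)$-action on the coefficient ring $\mathcal{O}_{\widehat{F}_\infty}$ forces the corresponding Galois translate on the cohomology side. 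Granting both points, the proof is a diagram chase: starting from $z\in H^1_\mathrm{Iw}(k_\infty,T)$, write $(h^1_{\infty,T})^{-1}(z)$ in $\mathbb{N}(T)\widehat{\otimes}_{\Zp}S_{F_\infty/\Qp}$, project its Yager factor to level $m$ to obtain $\sum_{\sigma}[\sigma]\otimes(h^1_{\mathrm{Iw},T})^{-1}(\sigma^{-1}z_m)$ over $F_m$, apply $1-\varphi$ and $\Theta_\infty$, and finally $\mathrm{proj}_m$; collecting terms gives exactly $\sum_{\sigma\in\Gal(F_m/\Qp)}[\sigma]\cdot\mathcal{L}_{T,F_m}(\sigma^{-1}z_m)=\mathcal{L}_{T,F_m}^{G_m^\prime}(z_m)$. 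In substance this is the interpolation property of the two-variable regulator of \cite{LZ14}, transported to the intermediate cyclotomic layers $F_m^\mathrm{cyc}$ rather than $k_\infty$ itself.

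The step I expect to be the main obstacle is the first point: making the compatibility between $h^1_{\infty,T}$ of \cite[Proposition 4.5]{LZ14} and the finite-layer isomorphisms $h^1_{\mathrm{Iw},T}$ precise, while carefully bookkeeping the semilinear $\Gal(F_m/\Qp)$-action on $\mathcal{O}_{\widehat{F}_\infty}$ as it passes through the completed tensor products $\mathbb{N}(T)\widehat{\otimes}_{\Zp}S_{F_\infty/\Qp}$ and $(\varphi^\ast\mathbb{N}(T))^{\psi=0}\widehat{\otimes}_{\Zp}S_{F_\infty/\Qp}$. In particular one must normalize $\Omega_{\Qp}$ and the generators $a_m$ so that no spurious power of $\varphi$ or cyclotomic-twist factor is introduced; once that normalization is fixed, the remaining verifications are formal.
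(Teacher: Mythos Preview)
Your sketch is substantively correct, but the paper does not actually prove this proposition: its entire proof is a one-line citation, ``See \cite[Theorem 4.7]{LZ14}.'' What you have written is essentially a reconstruction of the argument behind that theorem of Loeffler--Zerbes, so in that sense you are doing strictly more work than the paper does, not less.

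As a comparison: your approach correctly identifies the two ingredients (compatibility of $h^1_{\infty,T}$ with the finite-level Fontaine isomorphisms through the Yager specialization, and compatibility of the two-variable theta map with the one-variable Mellin transform), and your bookkeeping of the $\Gal(F_m/\Qp)$-twist producing the $[\sigma]\cdot(\sigma^{-1}\cdot x)$ shape is the right mechanism. This is indeed how the result is proved in \cite{LZ14}. If you intend to keep an independent argument, the only point requiring real care is exactly the one you flag at the end---the semilinear normalizations in the Yager module and the match between $\Omega_{\Qp}$ and the $a_m$---but for the purposes of this paper it suffices to cite \cite[Theorem 4.7]{LZ14} directly.
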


\begin{proof}
See \cite[Theorem 4.7]{LZ14}.
\end{proof}

Proposition \ref{regulator inverse limit} tells us that the maps $\mathcal{L}_{T,F_m}^{G_m^\prime}$ form a compatible system along the unramified tower $F_\infty$ and that their inverse limit is given by the two-variable regulator $\mathcal{L}_{T,k_\infty}$. 

For $x \in \mathcal{O}_{F_m}$, define \cite[Definition 3.4]{LZ14}
$$
y_{F_m/\Qp}(x) \colonequals \sum_{\sigma \in \Gal (F_m/\Qp)} \sigma^{-1}\cdot x [\sigma] \in \mathcal{O}_{F_m}[\Gal (F_m/\Qp)].
$$
Write $S_{F_m/\Qp}$ for the image of $y_{F_m/\Qp}$ inside $\mathcal{O}_{F_m}[\Gal (F_m/\Qp)]$ and extend $y_{F_m/\Qp}$ to $\mathcal{O}_{F_m}\otimes \Lambda( \Gamma_{F_m}^\text{cyc})$ by acting only on $\mathcal{O}_{F_m}$.

\begin{definition}
Define the map $\col_{T,F_m}^{G_m^\prime}:H^1_\mathrm{Iw}(F_m^\text{cyc},T) \to \mathcal{O}_{F_m} \otimes_{\Zp} \Lambda(G_m^\prime)^{\oplus d}$ by
$$
\col_{T,F_m}^{G_m^\prime}(z) \colonequals y_{F_m/\Qp}( \col_{T,F_m}(z)) = \sum_{\sigma \in \Gal (F_m/\Qp)} [\sigma] \cdot \col_{T,F_m}(\sigma^{-1}\cdot z).
$$
\end{definition}

For $m\geq 1$, let $\mathrm{proj}_{m/m-1}: \mathcal{H}_{F_m}( G_m^\prime)\to \mathcal{H}_{F_m}(G_{m-1}^\prime)$ be the projection map induced by the surjection $G_m^\prime \twoheadrightarrow G_{m-1}^\prime$. It induces by restriction a map $\mathcal{O}_{F_m}\llbracket G_m^\prime \rrbracket \to \mathcal{O}_{F_m}\llbracket G_{m-1}^\prime \rrbracket$ which we also denote $\mathrm{proj}_{m/m-1}$. It follows from  \cite[Proposition 3.5]{LZ14} that the image of $y_{F_m/\Qp}$ under $\mathrm{proj}_{m/m-1}$ is contained in $\mathcal{O}_{F_{m-1}}\llbracket G_{m-1}^\prime\rrbracket$.

\begin{lemma}\label{Coleman inverse limit}
The Coleman maps $\col_{T,F_m}^{G_m^\prime}$ are compatible in the unramified direction

$$
\begin{tikzcd}
H^1_\mathrm{Iw}(F_m^\mathrm{cyc},T) \arrow{r}{\col_{T,F_m}^{G_m^\prime}} \arrow[swap]{d}{\mathrm{cor}} & \mathcal{O}_{F_m} \llbracket G_m^\prime \rrbracket^{\oplus d} \arrow{d}{\mathrm{proj}_{m/m-1
}} \\
H^1_\mathrm{Iw}(F_{m-1}^\mathrm{cyc},T) \arrow{r}{\col_{T,F_{m-1}}^{G_{m-1}^\prime}} & \mathcal{O}_{F_{m-1}}\llbracket G_{m-1}^\prime \rrbracket^{\oplus d}
\end{tikzcd}
$$
\end{lemma}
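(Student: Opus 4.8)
The strategy is to reduce the compatibility of the two-variable-style maps $\col_{T,F_m}^{G_m^\prime}$ to the known compatibility of the one-variable maps $\col_{T,F_m}$ under corestriction along the cyclotomic tower, together with the compatibility of the Yager-type averaging operators $y_{F_m/\Qp}$ in the unramified direction (which is \cite[Proposition 3.5]{LZ14}, as recalled just before the statement). First I would fix $z \in H^1_\mathrm{Iw}(F_m^\mathrm{cyc},T)$ and unwind both composites. Going right-then-down gives
$$
\mathrm{proj}_{m/m-1}\big(\col_{T,F_m}^{G_m^\prime}(z)\big) = \mathrm{proj}_{m/m-1}\Big(\sum_{\sigma \in \Gal(F_m/\Qp)}[\sigma]\cdot \col_{T,F_m}(\sigma^{-1}\cdot z)\Big),
$$
while going down-then-right gives $\col_{T,F_{m-1}}^{G_{m-1}^\prime}(\mathrm{cor}(z)) = \sum_{\tau \in \Gal(F_{m-1}/\Qp)}[\tau]\cdot \col_{T,F_{m-1}}(\tau^{-1}\cdot \mathrm{cor}(z))$. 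The plan is to show these agree by matching the image of each $[\sigma]$-coefficient: the projection $G_m^\prime \twoheadrightarrow G_{m-1}^\prime$ sends $[\sigma]$ to $[\bar\sigma]$ where $\bar\sigma$ is the image of $\sigma$ in $\Gal(F_{m-1}/\Qp)$, and corestriction in the unramified direction intertwines $\sigma^{-1}\cdot z$ with $\bar\sigma^{-1}\cdot\mathrm{cor}(z)$ in the appropriate sense. The bookkeeping here is exactly the content of \cite[Proposition 3.5]{LZ14} applied to the coefficient system $\col_{T,F_m}(z)$ rather than to a pure scalar in $\mathcal{O}_{F_m}$; since $y_{F_m/\Qp}$ was defined to act only on the $\mathcal{O}_{F_m}$-factor, one has to be slightly careful that the $\Lambda(\Gamma^\mathrm{cyc})$-part is carried along untouched.

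The essential analytic input is that $\col_{T,F_m}$ itself is compatible under corestriction along the cyclotomic $\Zp$-extension, i.e. that $\mathrm{proj}^{\mathrm{cyc}}\circ \col_{T,F_m} = \col_{T,F_{m}}\circ \mathrm{cor}^{\mathrm{cyc}}$ at each finite level, which follows from the defining decomposition \eqref{decomposition une variable}: Perrin-Riou's regulator $\mathcal{L}_{T,F_m}$ is norm-compatible by construction (this is built into Fontaine's isomorphism $h^1_{\mathrm{Iw},T}$ and the $(1-\varphi)$ step), the logarithmic matrix $M_T$ is the same matrix at every level (it lives over $\mathcal{H}(\Gamma^\mathrm{cyc})$ and does not depend on $m$), and $M_T$ is injective by Proposition \ref{determinant}, so the uniqueness of $\col_{T,F_m}$ forces the compatibility. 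I would record this as a preliminary observation — essentially the statement that $\col_{T,F_m}$ was already shown to be well-behaved in the unramified direction in the one-variable theory of \cite{BL17} combined with base change $\Dcris(F_m,T)=\Dcris(T)\otimes_{\Zp}\mathcal O_{F_m}$ — and then feed it into the averaging computation above.

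Combining the two: apply $\mathrm{proj}_{m/m-1}$ to $\sum_\sigma [\sigma]\cdot \col_{T,F_m}(\sigma^{-1} z)$, group the $\sigma$ according to their image $\bar\sigma$ in $\Gal(F_{m-1}/\Qp)$, use that $[\sigma]\mapsto [\bar\sigma]$ and that summing $\col_{T,F_m}(\sigma^{-1} z)$ over each fiber reproduces (after applying the cyclotomic-level projection and using the preliminary compatibility) the term $\col_{T,F_{m-1}}(\bar\sigma^{-1}\mathrm{cor}(z))$. This is precisely the mechanism by which \cite[Proposition 3.5]{LZ14} guarantees the image of $y_{F_m/\Qp}$ lands in $\mathcal O_{F_{m-1}}\llbracket G_{m-1}^\prime\rrbracket$; here the same trace-compatibility of the Yager elements makes the diagram commute. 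The main obstacle I anticipate is the one flagged in the acknowledgements — getting the interaction between the $\mathcal O_{F_m}$-semilinear structure and the $\varphi$-powers exactly right, since $\col_{T,F_m}^{G_m^\prime}$ mixes an $\mathcal O_{F_m}$-linear averaging with a $\varphi$-semilinear Coleman map, and a naive identification of $\col_{T,F_m}(\sigma^{-1}z)$ with a $\sigma$-twist of $\col_{T,F_m}(z)$ is not quite correct because $\col_{T,F_m}$ is only $\Lambda(\Gamma_{F_m,0}^\mathrm{cyc})$-linear and does not commute with $\Gal(F_m/\Qp)$ on the nose — rather, as noted after \eqref{decomposition une variable}, it commutes with $\Gal(F_m(\mu_{p^\infty})/\Qp)$, and one must use exactly that larger equivariance to push the $\sigma$-sum through. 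Once that equivariance is invoked correctly the diagram commutes on the nose, so no pseudo-isomorphism fudge is needed.
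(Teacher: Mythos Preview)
Your plan has the right ingredients but misidentifies the key compatibility. The projection $\mathrm{proj}_{m/m-1}: \mathcal{O}_{F_m}\llbracket G_m^\prime\rrbracket \to \mathcal{O}_{F_{m-1}}\llbracket G_{m-1}^\prime\rrbracket$ is induced by the quotient $G_m^\prime \twoheadrightarrow G_{m-1}^\prime$ whose kernel is $\Gal(F_m^\mathrm{cyc}/F_{m-1}^\mathrm{cyc}) \cong \Gal(F_m/F_{m-1})$, i.e.\ the \emph{unramified} direction; the cyclotomic part $\Gamma_{F_m}^\mathrm{cyc}$ maps isomorphically onto $\Gamma_{F_{m-1}}^\mathrm{cyc}$. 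So your ``preliminary observation'' that $\col_{T,F_m}$ is norm-compatible along the cyclotomic tower is irrelevant here --- there is no cyclotomic-level projection in the diagram. After invoking the $\Gal(F_m/\Qp)$-equivariance of $\col_{T,F_m}$ and the Yager compatibility \cite[Proposition 3.5]{LZ14} as you propose, the computation reduces to the identity
\[
\mathrm{Tr}_{F_m/F_{m-1}}\bigl(\col_{T,F_m}(z)\bigr) \;=\; \col_{T,F_{m-1}}\bigl(\mathrm{cor}(z)\bigr)
\]
in $\mathcal{O}_{F_{m-1}}\otimes\Lambda(\Gamma^\mathrm{cyc})^{\oplus d}$, which is a trace compatibility of the one-variable Coleman map in the \emph{unramified} direction. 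Nothing in your plan establishes this; in particular your ``essential analytic input'' (cyclotomic compatibility of $\col_{T,F_m}$ with itself) says nothing about how $\col_{T,F_m}$ and $\col_{T,F_{m-1}}$ are related.

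The paper sidesteps this by working one level up. Rather than unwinding to $\col_{T,F_m}$, it uses directly that the \emph{averaged} regulators $\mathcal{L}_{T,F_m}^{G_m^\prime}$ are compatible under $\mathrm{proj}_{m/m-1}$ --- this is Proposition~\ref{regulator inverse limit}, i.e.\ \cite[Theorem 4.7]{LZ14}, which already packages the Yager bookkeeping together with the unramified trace compatibility of Perrin-Riou's map. Since $\mathcal{L}_{T,F_m}^{G_m^\prime} = (v_1,\ldots,v_d)\, M_T\, \col_{T,F_m}^{G_m^\prime}$ with $M_T$ defined over $\mathcal{H}(\Gamma^\mathrm{cyc})$ (hence fixed by $\mathrm{proj}_{m/m-1}$), one simply inverts $M_T$ using Proposition~\ref{determinant} to transfer the compatibility from $\mathcal{L}^{G_m^\prime}$ to $\col^{G_m^\prime}$. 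This is a two-line argument once Proposition~\ref{regulator inverse limit} is in hand; your route would need that same proposition anyway (applied via \eqref{decomposition une variable} to supply the missing trace identity above), so it is strictly longer without gaining anything.
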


\begin{proof}
Let $(z_m) \in \varprojlim_m H^1_{\mathrm{Iw}}(F_m^\text{cyc},T)$. By proposition \ref{regulator inverse limit}, we know that 
\begin{equation}\label{eqn:projection}
\mathrm{proj}_{m/m-1}(\mathcal{L}_{T,F_m}^{G_m^\prime}(z_m)) = \mathcal{L}_{T,F_{m-1}}^{G_{m-1}^\prime}(z_{m-1}).
\end{equation}
Furthermore, the decomposition \eqref{decomposition une variable} implies 
$$
\mathcal{L}_{T,F_m}^{G_m^\prime} =(v_1,\ldots,v_d)\cdot M_T \cdot \col_{T,F_m}^{G_m^\prime}.
$$
Since $M_T$ has entries in $\Lambda(\Gamma^\text{cyc})$, $\mathrm{proj}_{m/m-1}(M_T)=M_T$ for any $m$. By inverting $M_T$ in \eqref{eqn:projection}, we find $\mathrm{proj}_{m/m-1}(\col_{T,F_m}^{G_m^\prime}(z_m))=\col_{T,F_{m-1}}^{G_{m-1}^\prime}(z_{m-1})$. 
\end{proof}

The image of $\mathcal{O}_{F_m}\otimes \Lambda(\Gamma_{F_m}^\text{cyc})$ under $y_{F_m/\Qp}$ is $$S_{F_m/\Qp}\otimes \Lambda(\Gamma_{F_m}^\text{cyc}) \subseteq \mathcal{O}_{F_m}[\Gal (F_m/\Qp)] \otimes \Lambda(\Gamma_{F_m}^\text{cyc})=\mathcal{O}_{F_m}\llbracket G_m^\prime \rrbracket.$$ Taking limits, we find that the inverse limit of the $\col_{T,F_m}^{G_m^\prime}$ takes value in $$S_{F_\infty/\Qp} \widehat{\otimes} \Lambda (\Gal (F_\infty^\text{cyc}/F_\infty))^{\oplus d} \cong \Omega_{\Qp}\cdot \Lambda (U) \widehat{\otimes} \Lambda(\Gamma^\text{cyc})^{\oplus d} \cong \Omega_{\Qp}\cdot \Lambda(\Gamma_p)^{\oplus d}$$
where $S_{F_\infty/\Qp}=\varprojlim_m S_{F_m/\Qp}$ \cite[Equation 3.1]{LZ14}.


\begin{definition}\label{def:two-variable Coleman maps}
Define the two-variable Coleman map by
$$
\col_{T}^{k_\infty}(z) \colonequals \varprojlim_m \sum_{\sigma \in \Gal(F_m/\Qp)}[\sigma] \cdot \col_{T,F_m}(\sigma^{-1}\cdot z_m) : H^1_\mathrm{Iw}(k_\infty,T) \to \Omega_{\mathbf{Q}_p} \cdot \Lambda(\Gamma_p)^{\oplus d}.
$$
\end{definition}

\begin{proposition}\label{coleman inverse limit}
We have the decomposition
$$
\mathcal{L}_{T,k_\infty} = (v_1 \cdots v_d)\cdot M_T \cdot \col_T^{k_\infty}.
$$
\end{proposition}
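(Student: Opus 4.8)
The plan is to deduce the two-variable identity from the one-variable decomposition \eqref{decomposition une variable} by first establishing the analogous identity for the $G_m^\prime$-equivariant maps at every finite layer $F_m$ of the unramified tower $F_\infty$, and then passing to the inverse limit over $m$, invoking Proposition \ref{regulator inverse limit} on the left-hand side and Definition \ref{def:two-variable Coleman maps} on the right-hand side.

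First I would record the finite-layer decomposition
$$
\mathcal{L}_{T,F_m}^{G_m^\prime} = (v_1 \cdots v_d)\cdot M_T \cdot \col_{T,F_m}^{G_m^\prime},
$$
which is already used (without detailed justification) in the proof of Lemma \ref{Coleman inverse limit}. For $x \in H^1_\mathrm{Iw}(F_m^\mathrm{cyc},T)$, one applies \eqref{decomposition une variable} to $\sigma^{-1}\cdot x$ for each $\sigma \in \Gal(F_m/\Qp)$ and sums against $[\sigma]$:
$$
\mathcal{L}_{T,F_m}^{G_m^\prime}(x) = \sum_{\sigma} [\sigma]\cdot \mathcal{L}_{T,F_m}(\sigma^{-1}x) = \sum_{\sigma} [\sigma]\cdot\bigl( (v_1\cdots v_d)\,M_T\,\col_{T,F_m}(\sigma^{-1}x)\bigr).
$$
Here the row vector $(v_1\cdots v_d)$ is the fixed Hodge-compatible $\Zp$-basis of $\Dcris(T)$, sitting inside $\Dcris(F_m,T) = \Dcris(T)\otimes_{\Zp}\mathcal{O}_{F_m}$ as a family of $\Gal(F_m/\Qp)$-invariant vectors, while the entries of $M_T$ depend only on the cyclotomic variable $X$; since $G_m^\prime$ is abelian, multiplication by these entries commutes with multiplication by $[\sigma]$, and $[\sigma]$ fixes the $v_i\otimes 1$. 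Thus $(v_1\cdots v_d)$ and $M_T$ may be pulled out of the sum, yielding $(v_1\cdots v_d)\,M_T\,\sum_\sigma [\sigma]\cdot\col_{T,F_m}(\sigma^{-1}x) = (v_1\cdots v_d)\,M_T\,\col_{T,F_m}^{G_m^\prime}(x)$ by definition.

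Then I would pass to the limit. For $z \in H^1_\mathrm{Iw}(k_\infty,T)$ with images $z_m \in H^1_\mathrm{Iw}(F_m^\mathrm{cyc},T)$, Proposition \ref{regulator inverse limit} gives $\mathrm{proj}_m(\mathcal{L}_{T,k_\infty}(z)) = \mathcal{L}_{T,F_m}^{G_m^\prime}(z_m)$, whence $\mathcal{L}_{T,k_\infty}(z) = \varprojlim_m \mathcal{L}_{T,F_m}^{G_m^\prime}(z_m) = \varprojlim_m (v_1\cdots v_d)\,M_T\,\col_{T,F_m}^{G_m^\prime}(z_m)$ by the previous step. Since the entries of $M_T$ involve only $X$, they are fixed by every $\mathrm{proj}_{m/m-1}$ (exactly the observation already used in the proof of Lemma \ref{Coleman inverse limit}), so multiplication by the fixed data $(v_1\cdots v_d)M_T$ is compatible with the transition maps and commutes with $\varprojlim_m$. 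Using the identification of $\varprojlim_m \mathcal{O}_{F_m}\llbracket G_m^\prime\rrbracket^{\oplus d}$ with $\Omega_{\Qp}\cdot\Lambda(\Gamma_p)^{\oplus d}$ recorded just before Definition \ref{def:two-variable Coleman maps}, this gives $\mathcal{L}_{T,k_\infty}(z) = (v_1\cdots v_d)\,M_T\,\bigl(\varprojlim_m \col_{T,F_m}^{G_m^\prime}(z_m)\bigr) = (v_1\cdots v_d)\,M_T\,\col_T^{k_\infty}(z)$, the last equality being Definition \ref{def:two-variable Coleman maps}. As $z$ is arbitrary, this is the claimed decomposition.

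The step I expect to require the most care is the interchange of multiplication by $M_T$ with the inverse limit: this is legitimate precisely because $M_T$ has coefficients depending only on the cyclotomic variable, so it is a ``constant'' of the unramified tower, invariant under all the $\mathrm{proj}_{m/m-1}$, and therefore defines a single operator on $\Omega_{\Qp}\cdot(\mathcal{H}(\Gamma^\mathrm{cyc})\widehat{\otimes}\Lambda(U))\otimes_{\Zp}\Dcris(T)$ compatible with the maps $\mathrm{proj}_m$. The remaining verifications — keeping track of the trivial $\Delta$-isotypic components per the running convention, using $\Dcris(F_m,T)=\Dcris(T)\otimes_{\Zp}\mathcal{O}_{F_m}$ to ensure $(v_1\cdots v_d)$ is a basis at every layer, and matching up the various completed-tensor-product targets — are routine, since the relevant structures have all been set up earlier in this section.
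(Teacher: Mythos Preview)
Your proposal is correct and follows essentially the same approach as the paper: express $\mathcal{L}_{T,k_\infty}$ as the inverse limit of the $\mathcal{L}_{T,F_m}^{G_m^\prime}$ via Proposition \ref{regulator inverse limit}, apply the one-variable decomposition \eqref{decomposition une variable} at each finite layer, and pull the constant data $(v_1\cdots v_d)\,M_T$ through the limit using that $M_T$ has entries only in the cyclotomic variable. Your write-up is in fact more detailed than the paper's, which compresses these steps into two displayed equations.
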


\begin{proof}
By proposition \ref{regulator inverse limit}, we have
$$
\mathcal{L}_{T,k_\infty}(z) = \varprojlim_m \sum_{\sigma \in \Gal(F_m/\Qp)}[\sigma] \cdot \mathcal{L}_{T,F_m}(\sigma^{-1}\cdot z_m).
$$
Moreover, the decomposition \eqref{decomposition une variable} induces the decomposition
$$
\mathcal{L}_{T,k_\infty}(z) = (v_1 \cdots v_d)\cdot M_T \cdot \varprojlim_m \sum_{\sigma \in \Gal(F_m/\Qp)}[\sigma]\cdot \col_{T,F_m}(\sigma^{-1}\cdot z_m).
$$
\end{proof}

For $1\leq i \leq d$, let $\col_{T,i}^{k_\infty}$ be the $i$th component of the vector $\col_T^{k_\infty}$. By identifying $\Omega_{\mathbf{Q}_p} \cdot \Lambda(\Gamma_p)$ with $\Lambda(\Gamma_p)$, we omit $\Omega_{\mathbf{Q}_p}$ from the notation and see $\text{Col}_{T,i}^{k_\infty}$ as taking value in $\Lambda(\Gamma_p)$.

For simplicity, we also suppose that\bigbreak
\noindent \textbf{(Ram)} $p$ does not divide $h_K$, the class number of $K$.

\begin{lemma}
There is a unique prime above $\gq$ in $K_\infty$.
\end{lemma}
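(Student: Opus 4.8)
The plan is to show that $\mathfrak q$ is totally ramified in $K_\infty/K$, so that there is exactly one prime of $K_\infty$ above it. First I would recall that $K_\infty/K$ is a $\Zp^2$-extension, so every prime of $K$ above $p$ decomposes in $K_\infty$ according to its decomposition group inside $\Gamma$; the number of primes above $\mathfrak q$ in $K_\infty$ is the index of the decomposition group $\Gamma_{\mathfrak q}\subseteq\Gamma$. Thus it suffices to prove $\Gamma_{\mathfrak q}=\Gamma$, i.e.\ that $\mathfrak q$ does not split at all in $K_\infty$. Equivalently, writing $K^{\mathrm{nr}}_{\mathfrak q}$ for the maximal subextension of $K_\infty/K$ in which $\mathfrak q$ is unramified, I must show $K^{\mathrm{nr}}_{\mathfrak q}=K$.

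\medskip

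Next I would use the hypothesis \textbf{(Ram)} that $p\nmid h_K$. The key point is that the maximal abelian pro-$p$ extension of $K$ that is unramified everywhere has degree dividing $h_K$ (it is the $p$-part of the Hilbert class field), hence is trivial by \textbf{(Ram)}. More precisely, an extension of $K$ inside $K_\infty$ that is unramified at $\mathfrak q$ is still allowed to ramify at $\mathfrak q^c$; so the cleaner statement is: if a $\Zp$-subextension $L/K$ of $K_\infty/K$ is unramified at $\mathfrak q$, then combining with genus theory / class field theory for the ray class group of conductor a power of $\mathfrak q^c$, one sees that $L$ is contained in the compositum of $K$ with an unramified-everywhere $p$-extension, forcing $L=K$. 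The cleanest route: a $\Zp$-extension of $K$ unramified at $\mathfrak q$ must be ramified only at $\mathfrak q^c$ (and archimedean places, but $K$ is imaginary quadratic so those contribute nothing to pro-$p$), and the compositum of all $\Zp$-extensions of $K$ ramified only at $\mathfrak q^c$ has Galois group over $K$ whose rank is governed by the $\mathfrak q^c$-ramified Iwasawa module; under $p\nmid h_K$ this rank is exactly $1$, with the unique such $\Zp$-extension being ramified at $\mathfrak q^c$. Symmetrically the unique $\Zp$-extension unramified at $\mathfrak q^c$ is ramified at $\mathfrak q$. Since these two $\Zp$-extensions are distinct and together generate $K_\infty$, any prime of $K_\infty$ above $\mathfrak q$ has decomposition group of index at most... — and then the ramification forces index exactly $1$.

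\medskip

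Concretely, here is the argument I would write out. Let $I_{\mathfrak q}\subseteq\Gamma$ be the inertia subgroup at a prime of $K_\infty$ above $\mathfrak q$; since $K_\infty/K$ is pro-$p$ abelian and $\mathfrak q$ is tamely... no — $\mathfrak q$ lies above $p$, so this is wild ramification and $I_{\mathfrak q}$ can be as large as $\Gamma$. The local extension $K_{\infty,\mathfrak q}/K_{\mathfrak q}$ is obtained by completing, and $\Gamma_{\mathfrak q}=\Gal(K_{\infty,\mathfrak q}/K_{\mathfrak q})$. By local class field theory, $\Gamma_{\mathfrak q}$ is a quotient of $\widehat{K_{\mathfrak q}^\times}$ (the pro-$p$ completion), and since $K_{\mathfrak q}=\Qp$ has $\Zp$-rank $1+1=2$ in its pro-$p$ unit-times-uniformizer group (i.e.\ $\Qp^\times\otimes\Zp\cong\Zp^2$ via $v_p$ and $1+p\Zp$), the image $\Gamma_{\mathfrak q}$ has $\Zp$-rank at most $2$; but also $\Gamma_{\mathfrak q}\subseteq\Gamma\cong\Zp^2$, so nothing is gained yet. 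The decisive input is global: the quotient $\Gamma/\Gamma_{\mathfrak q}$ is the Galois group of the maximal subextension of $K_\infty$ in which $\mathfrak q$ splits completely, hence is unramified at $\mathfrak q$; and a subextension of $K_\infty/K$ unramified at $\mathfrak q$ is ramified at most at $\mathfrak q^c$. Now invoke \textbf{(Ram)}: by \cite{Gre89}-style standard class field theory, the maximal abelian pro-$p$ extension of $K$ unramified outside $\mathfrak q^c$ has Galois group of $\Zp$-corank equal to $1 + \delta$ where $\delta$ is the $\Zp$-rank of a unit-related module which vanishes when $p\nmid h_K$ (here Leopoldt is automatic for imaginary quadratic fields), so this corank is $1$. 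Therefore $\Gamma/\Gamma_{\mathfrak q}$ has $\Zp$-rank $\le 1$; symmetrically, using that $\mathfrak q^c$ cannot split in the whole of $K_\infty$ either, one gets that $\Gamma_{\mathfrak q}$ has $\Zp$-rank $\ge 1$ and in fact the two ``axes'' (the $\mathfrak q$-ramified and $\mathfrak q^c$-ramified $\Zp$-extensions) are independent, giving $\Gamma/\Gamma_{\mathfrak q}$ finite. Finally, finiteness plus the fact that $\Gamma/\Gamma_{\mathfrak q}$ is a quotient of $\Zp^2$ (hence torsion-free) forces $\Gamma/\Gamma_{\mathfrak q}=1$, i.e.\ $\Gamma_{\mathfrak q}=\Gamma$, so $\mathfrak q$ is totally nonsplit — with trivial residue and totally ramified — and there is a unique prime above $\mathfrak q$ in $K_\infty$.

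\medskip

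I expect the main obstacle to be pinning down exactly which class-field-theoretic statement does the work and citing it cleanly: the clean fact is that \emph{for an imaginary quadratic field $K$ in which $p$ splits, each of the two primes above $p$ is totally ramified in the $\Zp^2$-extension $K_\infty/K$ provided $p\nmid h_K$}, which follows because the local uniformizer at $\mathfrak q$ must have nontrivial image in $\Gamma$ under the Artin map (otherwise one manufactures an everywhere-unramified $p$-extension, contradicting $p\nmid h_K$), and the local character group at $\mathfrak q=\Qp$ surjects onto a $\Zp$-line of $\Gamma$; doing this at both $\mathfrak q$ and $\mathfrak q^c$ and noting the two lines span $\Gamma$ gives total ramification at each. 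I would phrase the write-up so that the only external fact invoked is the triviality of the $p$-Hilbert class field under \textbf{(Ram)}, keeping everything else self-contained.
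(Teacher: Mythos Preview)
Your proposal has two genuine problems. First, there is a persistent conflation of ``totally ramified'' (inertia $I_{\gq}=\Gamma$) with ``non-split'' (decomposition group $D_{\gq}=\Gamma$). You open by declaring the plan is to show $\gq$ is totally ramified in $K_\infty/K$, and you rephrase the goal as $K^{\mathrm{nr}}_{\gq}=K$; but this is simply false. The inertia $I_{\gq}$ is the image of $\mathcal{O}_{K_{\gq}}^\times=\Zp^\times$ under the Artin map, hence has $\Zp$-rank at most $1$, while $\Gamma\cong\Zp^2$. Indeed the paper later identifies $K_{\infty,\gq}$ with $k_\infty=\Qp^{\mathrm{cyc}}\cdot F_\infty$, which has a nontrivial \emph{unramified} $\Zp$-direction over $\Qp$. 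The lemma only requires $D_{\gq}=\Gamma$, and your closing line ``totally nonsplit --- with trivial residue and totally ramified'' asserts the stronger, false statement. Second, even if one reads the middle of your argument as aiming at $D_{\gq}=\Gamma$, the endgame fails: the step ``the two axes are independent, giving $\Gamma/\Gamma_{\gq}$ finite'' is asserted but not proved (independence of $I_{\gp}$ and $I_{\gp^c}$ says nothing directly about $D_{\gq}$), and the clincher ``$\Gamma/\Gamma_{\gq}$ is a quotient of $\Zp^2$, hence torsion-free'' is false in general --- e.g.\ $\Zp^2/\Zp\!\cdot\!(p,0)\cong \Z/p\Z\times\Zp$.

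For comparison, the paper's argument is much shorter and runs along a different axis: from $p\nmid h_K$ one gets $K_\infty\cap K(1)=K$ (with $K(1)$ the Hilbert class field), and since complex conjugation identifies the inertia subgroups at $\gp$ and $\gp^c$ inside $\Gal(K^{\mathrm{ac}}/K)$, this forces $\gq$ to be totally ramified in the anticyclotomic $\Zp$-extension $K^{\mathrm{ac}}$. The paper then combines this with the fact that $\gq$ does not split in the cyclotomic $\Zp$-extension $K^{\mathrm{cyc}}$ to conclude. So rather than bounding $\Gamma/D_{\gq}$ via the $\gq^c$-ramified class field theory as you attempt, the paper works through the decomposition $K_\infty=K^{\mathrm{ac}}\cdot K^{\mathrm{cyc}}$.
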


\begin{proof}
Since $\Gamma$ is pro-$p$, \textbf{(Ram)} implies that $K_\infty \cap K(1) = K$. Thus, every prime above $p$ in $K^\text{ac}$ is totally ramified. The result follows since $\gq$ does not split in $K^\text{cyc}$. 
\end{proof}

Since $p$ is split in $K$, the completion of $K$ at $\mathfrak{q}$, denoted $K_\mathfrak{q}$, is isomorphic to $\mathbf{Q}_p$. By \textbf{(Ram)}, there is a unique prime above $\gq$ in $K_\infty$. By abuse of notation, we will also denote by $\mathfrak{p}$ and $\mathfrak{p}^c$ the unique prime above $\gp$ and $\gp^c$ respectively in $K_\infty$. Moreover, we have $K_{\infty,\mathfrak{q}} \cong k_\infty$. Let $\mathbb{D}_{\cris,\mathfrak{q}}(T)$ be the crystalline module of $T$ viewed as a representation of $G_{K_\mathfrak{q}}$. Choose $\{v_{\gp,1},\ldots, v_{\gp,d} \}$ a $\mathbf{Z}_p$-basis of $\mathbb{D}_{\cris,\gp}(T)$ such that $\{v_{\gp,1},\ldots, v_{\gp,d_+} \}$ is a $\mathbf{Z}_p$-basis of $\text{Fil}^0 \mathbb{D}_{\cris,\gp}(T)$ and choose $\{v_{\gp^c,1},\ldots, v_{\gp^c,d} \}$ a $\mathbf{Z}_p$-basis of $\mathbb{D}_{\cris,\gp^c}(T)$ such that $\{v_{\gp^c,1},\ldots, v_{\gp^c,d_+} \}$ is a $\mathbf{Z}_p$-basis of $\text{Fil}^0 \mathbb{D}_{\cris,\gp^c}(T)$. We will also denote by 
\begin{align*}
\mathcal{L}_{T,\gq} : H^1_\text{Iw}(K_{\infty,\gq},T) &\to \Omega_{\mathbf{Q}_p} \cdot \left( \mathcal{H}(\Gamma^\text{cyc})\widehat{\otimes}\Lambda(U) \right) \otimes_{\mathbf{Z}_p} \mathbb{D}_{\cris,\gq}(T) \\
& \cong \mathcal{H}_{\widehat{F}_\infty}(\Gamma_p)\otimes_{\mathbf{Z}_p} \mathbb{D}_{\cris,\gq}(T)
\end{align*}
the local regulator at $\gq$. Let $C_{\varphi,\gq}$ denotes the matrix of the Frobenius operator on $\mathbb{D}_{\cris,\gq}$ and let $M_{T,\gq} = \lim_{n \to \infty}(C_{\varphi,\gq})^{n+1}C_n\cdots C_1$. Then, proposition \ref{coleman inverse limit} induces the decomposition
$$
\mathcal{L}_{T,\gq} = (v_1,\ldots,v_d)M_{T,\gq}\begin{bmatrix} \text{Col}_{T,1}^\gq \\ \vdots \\ \text{Col}_{T,d}^\gq\end{bmatrix}
$$
where $\text{Col}_{T,i}^\gq:H^1_{\mathrm{Iw}}(K_{\infty,\gq},T) \to \Lambda(\Gamma_p)$ is the $v_{\gq,i}$-component of the Coleman map given by definition \ref{def:two-variable Coleman maps}.
Let $\{v_{\gq,1}^\prime,\ldots,v_{\gq,d}^\prime \}$ denote the dual basis of $\mathbb{D}_{\cris,\gq}(T^\ast (1))$. Denote by $C_{\varphi,\gq}^\ast$ the matrix of the Frobenius on $\mathbb{D}_{\cris,\gq}(T^\ast (1))$. Since $p^{-1}\varphi$ is the adjoint of the Frobenius acting on $\mathbb{D}_{\cris,\gq} (T^\ast (1))$ by \eqref{dual operator} and that we have chosen a dual basis, we get
$$
C_{\varphi,\gq}^\ast = \frac{1}{p}\cdot (C_{\varphi,\gq}^{-1})^t.
$$
As we did for $T$, we define 
$$
M_{T^\ast (1),\gq}=\lim_{n\to \infty} (C_{\varphi,\gq}^\ast)^{n+1}C^\ast_{n,\gq} \cdots C^\ast_{1,\gq}
$$
where 
\begin{equation}\label{matrice duale}
C^\ast_{n,\gq} = \left[
\begin{array}{c|c}
\Phi_{p^n}(1+X) I_{d_-} & 0 \\
\hline
0 & I_{d_+}
\end{array}
\right] C_\gq^t.
\end{equation}
We get a similar decomposition for $\mathcal{L}_{T^\ast (1),\gq}$:
\begin{equation}\label{regulator decomposition}
\mathcal{L}_{T^\ast (1),\gq} = (v_{\gq,1}^\prime,\ldots, v_{\gq,d}^\prime) M_{T^\ast (1),\gq}\begin{bmatrix} \text{Col}_{T^\ast (1),1}^\gq \\ \vdots \\ \text{Col}_{T^\ast (1),d}^\gq\end{bmatrix}
\end{equation}
where $\text{Col}_{T^\ast (1),i}^\gq:H^1_{\mathrm{Iw}}(K_{\infty,\gq},T^\ast (1)) \to \Lambda(\Gamma_p)$ is the $v^\prime_{\gq,i}$-component of the Coleman map given by definition \ref{def:two-variable Coleman maps} for $T^\ast (1)$. 
For $I_\gq \subseteq \{1,\ldots, d\}$, we let $\text{Col}_{T,I_\gq}^\gq := \bigoplus_{i \in I_\gq} \text{Col}_{T,i}^\gq$. For a fixed $I_\gq \subseteq \{1,\ldots, d\}$ with $\#I_\gq=d_+$, we define $H^1_{I_\gq}(K_{\infty,\gq},T^\dagger)$ to be the orthogonal complement of $\ker \text{Col}_{T,I_\gq}^\gq$ under local Tate duality
$$
H^1_\text{Iw}(K_{\infty,\gq},T) \times H^1(K_{\infty,\gq},T^\dagger) \to \mathbf{Q}_p /\mathbf{Z}_p.
$$
Let $\Sigma$ be a finite set of places of $K$ containing the prime above $p$, the archimedean places and the prime of ramification of $T^\dagger$. Let $K_\Sigma$ be the maximal extension of $K$ unramified outside $\Sigma$. Let $\Sigma^\prime$ the set of places of $K_\infty$ lying above the places in $\Sigma$. If $M$ is any $\Gal(K_\Sigma /K_\infty)$-module, we denote $H^1(K_\Sigma/K_\infty,M)$ by $H^1_\Sigma(K_\infty,M)$. Let $\underline{I}\colonequals (I_\gp,I_{\gp^c})$ a choice of subsets as above. Let
$$
\mathcal{P}_{\Sigma,\underline{I}}(T^\dagger/K_\infty) \colonequals \bigoplus_{v \in \Sigma^\prime, v \nmid p} \frac{H^1(K_{\infty,v},T^\dagger)}{H^1_f(K_{\infty,v},T^\dagger)} \bigoplus_{\gq \in \{\gp,\gp^c\}} \frac{H^1(K_{\infty,\gq},T^\dagger)}{H^1_{I_\gq}(K_{\infty,\gq},T^\dagger)}
$$
where $H^1_f(K_{\infty,v},T^\dagger)$ is the unramified subgroup of $H^1(K_{\infty,v},T^\dagger)$. Then, $\Sel_{\underline{I}}(T^\dagger / K_\infty)$ is defined to be 
$$
\ker \left( H^1_\Sigma(K_\infty, T^\dagger) \to \mathcal{P}_{\Sigma,\underline{I}}(T^\dagger/K_\infty) \right).
$$
We define $\text{Col}_{T^\ast (1),I_\gq}^\gq$, $H_I^1(K_{\infty,\gq},T^\ast (1))$, $H_f^1(K_{\infty,v},T^\ast (1)^\dagger)$ and $\Sel_{\underline{I}}(T^\ast (1)^\dagger /K_\infty)$ in the obvious way.

\subsection{Orthogonality of local conditions}\label{sec:orthogonality}

Next, we show that the local conditions at $p$ defining $\Sel_{\underline{I}}(T^\dagger / K_\infty)$ and $\Sel_{\underline{I}}(T^\ast (1)^\dagger /K_\infty)$ are the orthogonal complement of each other under Perrin-Riou's pairing.



\begin{lemma}\label{rank}
For any $I_\gq \subseteq \{1,\ldots,d\}$, the $\Lambda(\Gamma_p)$-module $\ker (\text{Col}_{T,I_\gq}^\gq)$ is 
of rank $d-\#I_\gq$.
\end{lemma}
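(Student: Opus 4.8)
The plan is to reduce the computation of $\rank_{\Lambda(\Gamma_p)} \ker(\col_{T,I_\gq}^\gq)$ to a rank count for the full Coleman map $\col_T^\gq = (\col_{T,1}^\gq, \ldots, \col_{T,d}^\gq)$, and then to extract that from the decomposition $\mathcal{L}_{T,\gq} = (v_1 \cdots v_d) M_{T,\gq} \col_T^\gq$ of Proposition~\ref{coleman inverse limit} together with the determinant computation of Proposition~\ref{determinant}. First I would recall that $H^1_{\mathrm{Iw}}(K_{\infty,\gq},T)$, being identified with $H^1_{\mathrm{Iw}}(k_\infty,T)$, is a $\Lambda(\Gamma_p)$-module of rank $d$; this is the standard Iwasawa-cohomology rank computation over the two-variable unramified-times-cyclotomic $\Zp^2$-extension (local Euler characteristic, using that $H^0$ and $H^2$ are torsion under our running hypotheses, in particular \textbf{(H.F)}). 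Since $\det M_{T,\gq}$ equals $\bigl( \tfrac{\log(1+X)}{pX} \bigr)^{d_+}$ up to a unit (Proposition~\ref{determinant}), the matrix $M_{T,\gq}$ is invertible over the total ring of fractions of $\Lambda(\Gamma_p)$, hence over $\mathrm{Frac}(\Lambda(\Gamma_p))$ the regulator $\mathcal{L}_{T,\gq}$ and the Coleman map $\col_T^\gq$ have the same rank. Perrin-Riou's regulator $\mathcal{L}_{T,\gq}$ is known to be injective (its kernel is the maximal $\Lambda$-submodule on which $\Gamma_p$ acts through a finite-order character times a power of the cyclotomic character, which vanishes here because $H^1_{\mathrm{Iw}}$ has no such nonzero submodule — again a consequence of \textbf{(H.F)} and the absence of trivial subquotients), so $\col_T^\gq$ has rank $d$, i.e. is injective after inverting a nonzerodivisor.

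Granting injectivity of $\col_T^\gq$ up to pseudo-null error, I would then argue as follows. The map $\col_{T,I_\gq}^\gq = \bigoplus_{i \in I_\gq} \col_{T,i}^\gq$ is the composition of $\col_T^\gq : H^1_{\mathrm{Iw}}(K_{\infty,\gq},T) \to \Lambda(\Gamma_p)^{\oplus d}$ with the coordinate projection $\mathrm{pr}_{I_\gq} : \Lambda(\Gamma_p)^{\oplus d} \to \Lambda(\Gamma_p)^{\oplus I_\gq}$ onto the coordinates in $I_\gq$. Since $\col_T^\gq$ becomes an isomorphism onto its image after tensoring with $\mathrm{Frac}(\Lambda(\Gamma_p))$ — indeed, because $\det M_{T,\gq} \neq 0$, the image spans a rank-$d$ subspace, which over the fraction field is all of $\mathrm{Frac}(\Lambda(\Gamma_p))^{\oplus d}$ — the kernel of $\mathrm{pr}_{I_\gq} \circ \col_T^\gq$ over the fraction field is the preimage of $\ker \mathrm{pr}_{I_\gq}$, a space of dimension $d - \#I_\gq$. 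As $\ker(\col_{T,I_\gq}^\gq)$ is a $\Lambda(\Gamma_p)$-submodule of the rank-$d$ torsion-free-up-to-pseudo-null module $H^1_{\mathrm{Iw}}(K_{\infty,\gq},T)$, its rank is exactly this fraction-field dimension, namely $d - \#I_\gq$.

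The main obstacle is the injectivity (equivalently, the rank-$d$ statement) for $\mathcal{L}_{T,\gq}$ and hence for $\col_T^\gq$: one must rule out a nonzero kernel, and a priori $\mathcal{L}_{T,\gq}$ is only known to have a kernel supported on eigenspaces of $\Gamma_p$ of a very restricted type. I would handle this by invoking the characterization of $\ker \mathcal{L}_{T,\gq}$ from Loeffler--Zerbes (the kernel is the submodule of $H^1_{\mathrm{Iw}}$ on which an open subgroup of $\Gamma_p$ acts via $\chi^j$ for suitable $j$) and noting that such a submodule is contained in $\varprojlim_n H^0$ of twists of $T$, which is trivial (or at worst pseudo-null) by \textbf{(H.F)} and the fact that $1$ is not an eigenvalue of Frobenius (\textbf{(H.Frob)}). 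Alternatively, and perhaps more cleanly, I would bypass injectivity of the regulator entirely: it suffices that $\col_T^\gq$ have rank $d$, and this follows because $H^1_{\mathrm{Iw}}(K_{\infty,\gq},T)$ has rank $d$ and $\col_T^\gq$ cannot have smaller rank without $\det M_{T,\gq}$ forcing a relation contradicting Proposition~\ref{determinant} — that is, if the image of $\col_T^\gq$ had rank $<d$, then composing with $(v_1 \cdots v_d) M_{T,\gq}$ would make $\mathcal{L}_{T,\gq}$ of rank $<d$, contradicting the surjectivity of $\mathcal{L}_{T,\gq}$ onto a rank-$d$ space which itself follows from the nonvanishing of $\det M_{T,\gq}$ and the rank-$d$ computation for $H^1_{\mathrm{Iw}}$.
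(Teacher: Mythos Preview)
Your main line of argument is correct and reaches the right conclusion, but the route differs from the paper's. You work directly in two variables: establish that $H^1_{\mathrm{Iw}}(K_{\infty,\gq},T)$ has $\Lambda(\Gamma_p)$-rank $d$, argue that the two-variable regulator $\mathcal{L}_{T,\gq}$ is injective (via the Loeffler--Zerbes description of its kernel together with \textbf{(H.F)}/\textbf{(H.Frob)}), deduce from the nonvanishing of $\det M_{T,\gq}$ that $\col_T^\gq$ has rank $d$, and then read off the rank of $\ker(\mathrm{pr}_{I_\gq}\circ\col_T^\gq)$ over the fraction field. The paper instead reduces to the one-variable situation: it quotes \cite[Corollary~2.22]{BL17} for the rank of the image of the cyclotomic Coleman map $\col_{T,\Qp,I_\gq}$, then uses the identity $\im(\mathcal{L}_{T,\gq}) = \im(\mathcal{L}_{T,\Qp})\,\widehat{\otimes}_{\Zp}\,S_{F_\infty/\Qp}$ to transport this to $\im(\col_{T,I_\gq}^\gq) = \im(\col_{T,\Qp,I_\gq})\,\widehat{\otimes}_{\Zp}\,S_{F_\infty/\Qp}$, and concludes by rank--nullity. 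Your approach is more self-contained (it avoids the external input from \cite{BL17}) but requires the two-variable injectivity of the regulator as a black box; the paper's approach is shorter because the one-variable image computation is already available off the shelf.

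One genuine issue: the ``alternative'' you sketch in the final paragraph is circular. You propose to deduce that $\col_T^\gq$ has rank $d$ from the statement that $\mathcal{L}_{T,\gq}$ has rank $d$, and then justify the latter by ``the nonvanishing of $\det M_{T,\gq}$ and the rank-$d$ computation for $H^1_{\mathrm{Iw}}$''. But $\mathcal{L}_{T,\gq} = (v_1\cdots v_d)\,M_{T,\gq}\cdot \col_T^\gq$, so knowing only that the domain has rank $d$ and that $M_{T,\gq}$ is invertible over the fraction field tells you nothing about the rank of $\mathcal{L}_{T,\gq}$ without already knowing the rank of $\col_T^\gq$. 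You must use the injectivity (or torsion-kernel) statement for the regulator --- there is no shortcut here. Drop that alternative and keep the first argument.
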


\begin{proof}
By \cite[Theorem A.2]{LZ14}, $H^1_\text{Iw}(K_{\infty,\gq},T)$ is a finitely generated $\Lambda(\Gamma_p)$-module of rank $d$. Let $\col_{T,\Qp,i}$ be the $i$th component of the one-variable Coleman map $\col_{T,\Qp}$. By \cite[Corollary 2.22]{BL17}, $\text{Im}(\col_{T,\Qp,I_{\mathfrak{q}}})$ is contained in a free $\Lambda(\Gamma^\text{cyc})$-module of rank $d-\#I_\mathfrak{q}$, with finite index. By the definition of the regulator map \cite[Definition 4.6]{LZ14},
$$
\im (\mathcal{L}_{T,\gq}) = \im (\mathcal{L}_{T,\Qp}) \widehat{\otimes}_{\Zp} S_{F_\infty/\Qp}.
$$
By the decomposition of proposition \ref{coleman inverse limit} and of \eqref{decomposition une variable}, 
$$
(v_1 \cdots v_d)\cdot M_T \cdot \im (\col_T^\gq) = (v_1 \cdots v_d)\cdot M_T \cdot \im (\col_{T,\Qp}) \widehat{\otimes}_{\Zp} S_{F_\infty/\Qp}.
$$
After inverting $M_T$, we get that $\im (\col_{T,I_\mathfrak{q}}^\gq ) = \im (\col_{T,\Qp,I_\mathfrak{q}})\widehat{\otimes}_{\Zp} S_{F_\infty/\Qp}$ is contained in a free $\Lambda(\Gamma_p)$-module of rank $d-\#I_\mathfrak{q}$, with finite index. It follows that $\ker (\col_{T,I_\gq}^\gq)$ is of rank $d-\# I_\gq$ over $\Lambda(\Gamma_p)$.
\end{proof}

Let $\langle \sim,\sim \rangle_n$ be the local Tate pairing $H^1(K_{n,\gq},T)\times H^1(K_{n,\gq},T^\ast (1)) \to \mathbf{Z}_p$. Consider Perrin-Riou's pairing 
\begin{align*}
\langle \sim,\sim \rangle : H^1_\text{Iw}(K_{\infty,\gq},T) \times H^1_\text{Iw}(K_{\infty,\gq},T^\ast (1)) &\to \Lambda(\Gamma_p) \\
( (x_n),(y_n) ) &\mapsto \varprojlim_n \sum_{\sigma \in \Gamma_{p,n}} \langle x_n, y_n^\sigma \rangle_n\cdot \sigma \in \varprojlim_n \mathbf{Z}_p[\Gamma_{p,n}]
\end{align*}
and the crystalline pairing
$$
[\sim,\sim]: \mathbb{D}_{\cris,\gq}(T) \times \mathbb{D}_{\cris,\gq}(T^\ast (1)) \to \mathbf{Z}_p.
$$
A version of Perrin-Riou's explicit reciprocity law for $\mathbf{Z}_p^2$-extensions was proved by Loeffler--Zerbes \cite[Theorem 4.17]{LZ14}. If $x \in H^1_\text{Iw}(K_{\infty,\gq},T)$ and $y \in H^1_\text{Iw}(K_{\infty,\gq},T^\ast (1))$, then 
\begin{equation}\label{reciprocity}
[\mathcal{L}_{T,\gq}(x),\mathcal{L}_{T^\ast (1),\gq}(y)] = -\sigma_{-1}\cdot \ell_0 \cdot \langle x,y \rangle
\end{equation}
where $\ell_0 = \frac{\log \gamma}{\log \chi (\gamma)}$ for any $\gamma \in \Gamma^\text{cyc}$. The element $\sigma_{-1}$ is the unique element of the inertia subgroup of $\Gamma_p$ such that $\chi(\sigma_{-1})=-1$.

\begin{lemma}\label{matrices product}
Let $x \in H^1_\text{Iw}(K_{\infty,\gq},T)$ and $y \in H^1_\text{Iw}(K_{\infty,\gq},T^\ast (1))$, then
$$
[\mathcal{L}_{T,\gq}(x),\mathcal{L}_{T^\ast (1),\gq}(y)] = \frac{\log (1+X)}{pX}\cdot \text{Col}_T^\gq(x)^t \cdot \text{Col}_{T^\ast (1)}^\gq(y).
$$
\end{lemma}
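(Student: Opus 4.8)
The plan is to substitute the two decompositions
$$
\mathcal{L}_{T,\gq} = (v_{\gq,1}\cdots v_{\gq,d})\, M_{T,\gq}\, \text{Col}_T^\gq, \qquad
\mathcal{L}_{T^\ast(1),\gq} = (v_{\gq,1}^\prime\cdots v_{\gq,d}^\prime)\, M_{T^\ast(1),\gq}\, \text{Col}_{T^\ast(1)}^\gq
$$
into the crystalline pairing $[\sim,\sim]$ and expand, using $\Lambda(\Gamma_p)$-bilinearity. Since $\{v_{\gq,i}^\prime\}$ is the dual basis of $\{v_{\gq,i}\}$ with respect to the crystalline pairing $[\sim,\sim]\colon \mathbb{D}_{\cris,\gq}(T)\times \mathbb{D}_{\cris,\gq}(T^\ast(1))\to \mathbf{Z}_p$, the pairing of $(v_{\gq,1}\cdots v_{\gq,d})M_{T,\gq}$ against $(v_{\gq,1}^\prime\cdots v_{\gq,d}^\prime)M_{T^\ast(1),\gq}$ collapses to the scalar matrix product $M_{T,\gq}^t\, M_{T^\ast(1),\gq}$, so that
$$
[\mathcal{L}_{T,\gq}(x),\mathcal{L}_{T^\ast(1),\gq}(y)] = \text{Col}_T^\gq(x)^t \cdot \big(M_{T,\gq}^t\, M_{T^\ast(1),\gq}\big) \cdot \text{Col}_{T^\ast(1)}^\gq(y).
$$
Thus the lemma reduces to the matrix identity $M_{T,\gq}^t\, M_{T^\ast(1),\gq} = \frac{\log(1+X)}{pX}\, I_d$, up to the interference of the semilinear Frobenius with the transpose, which I will need to track carefully.

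The second step is to prove that matrix identity. Recall $M_{T,\gq} = \lim_n (C_{\varphi,\gq})^{n+1}C_n\cdots C_1$ and $M_{T^\ast(1),\gq} = \lim_n (C_{\varphi,\gq}^\ast)^{n+1}C_{n,\gq}^\ast\cdots C_{1,\gq}^\ast$, where $C_{\varphi,\gq}^\ast = \frac{1}{p}(C_{\varphi,\gq}^{-1})^t$ and $C_{n,\gq}^\ast$ is the matrix in \eqref{matrice duale}. I would first compute $C_n^t\, C_{n,\gq}^\ast$: writing $C_n = \mathrm{diag}(I_{d_-},\Phi_{p^n}(1+X)I_{d_+})C_\gq^{-1}$, its transpose is $(C_\gq^{-1})^t\,\mathrm{diag}(I_{d_-},\Phi_{p^n}(1+X)I_{d_+})$, and multiplying by $C_{n,\gq}^\ast = \mathrm{diag}(\Phi_{p^n}(1+X)I_{d_-},I_{d_+})\,C_\gq^t$ gives $(C_\gq^{-1})^t\cdot \Phi_{p^n}(1+X)I_d \cdot C_\gq^t = \Phi_{p^n}(1+X)\, I_d$ after the block-diagonal factors combine to $\Phi_{p^n}(1+X)I_d$. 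Hence $(C_n\cdots C_1)^t(C_{n,\gq}^\ast\cdots C_{1,\gq}^\ast)$ telescopes — with the caveat that transposing reverses order, so I must take the product in the correct order and use that the $C_{n,\gq}^\ast$'s are being multiplied against $C_n^t,\dots,C_1^t$ — yielding $\prod_{j=1}^n \Phi_{p^j}(1+X)\, I_d$. For the Frobenius factors, $\big((C_{\varphi,\gq})^{n+1}\big)^t (C_{\varphi,\gq}^\ast)^{n+1} = \big((C_{\varphi,\gq})^t\big)^{n+1}\big(\tfrac1p (C_{\varphi,\gq}^{-1})^t\big)^{n+1} = p^{-(n+1)} I_d$. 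Combining, $M_{T,\gq}^t M_{T^\ast(1),\gq} = \lim_n p^{-(n+1)}\prod_{j=1}^n \Phi_{p^j}(1+X)\, I_d$, and the classical identity $\prod_{j\ge 1}\frac{\Phi_{p^j}(1+X)}{p} = \frac{\log(1+X)}{pX}$ (equivalently $\prod_{j=1}^n \Phi_{p^j}(1+X) = \frac{(1+X)^{p^n}-1}{X}$, divided by $p^{n+1}$ and taking the limit) gives the claimed scalar.

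I expect the main obstacle to be the careful bookkeeping in the first step: $\varphi$ on the Dieudonné modules is only $\mathcal{O}_F$-semilinear (for $\gq$ the relevant field is $k_\infty$, where $\varphi$ acts on $\widehat{F}_\infty$), so when I write $\mathcal{L}_{T,\gq}$ as a row vector times $M_{T,\gq}$ times the Coleman vector, the coefficients of $M_{T,\gq}$ and the way the crystalline pairing interacts with $\varphi^{-1}$ as the adjoint of $p\varphi$ — formula \eqref{dual operator} — must be threaded through correctly so that the transpose appearing in the answer is the honest matrix transpose and not a $\varphi$-twisted one. Concretely, I will verify that because $C_{\varphi,\gq}$ is defined over $\mathbf{Q}_p$ (as noted in the remark after \eqref{decomposition une variable}), all the matrices $C_{\varphi,\gq}, C_n, C_{n,\gq}^\ast, C_\gq$ have $\mathbf{Q}_p$- or $\Lambda(\Gamma^{\mathrm{cyc}})$-coefficients, the semilinearity is invisible at the level of these matrix products, and the crystalline pairing is genuinely $\mathbf{Z}_p$-bilinear on the chosen bases — so the dual-basis collapse is legitimate. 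Once that is checked, the rest is the routine telescoping computation sketched above, and the factor $\frac{\log(1+X)}{pX}$ (note Proposition \ref{determinant} already identifies $\det M_T$ with a power of this same quantity, which is a consistency check: $\det(M_{T,\gq}^t M_{T^\ast(1),\gq}) = \big(\frac{\log(1+X)}{pX}\big)^{d_+ + d_-} = \big(\frac{\log(1+X)}{pX}\big)^d$) emerges from the infinite product.
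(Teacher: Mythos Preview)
Your proposal is correct and is essentially the argument the paper has in mind: the paper's proof simply cites \cite[Lemma 3.1]{LP17}, and what you have written is precisely that computation carried over to the two-variable setting --- expand the pairing via the decompositions, reduce to $M_{T,\gq}^t M_{T^\ast(1),\gq}$, and telescope using $C_n^t C_{n,\gq}^\ast = \Phi_{p^n}(1+X) I_d$ together with $(C_{\varphi,\gq}^t)^{n+1}(C_{\varphi,\gq}^\ast)^{n+1}=p^{-(n+1)}I_d$. Your check that the semilinearity is harmless because $C_{\varphi,\gq}$ has $\mathbf{Q}_p$-coefficients is the right observation and matches the paper's earlier remark after \eqref{decomposition une variable}.
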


\begin{proof}
The proof is the same as \cite[Lemma 3.1]{LP17}.
\end{proof}

\begin{lemma}\label{orthogonal}
Let $I_\gq \subseteq \{1,\ldots,d\}$ and $I_\gq^c$ its complement. Then $\ker \text{Col}_{T^\ast (1), I_\gq^c}^\gq$ is the orthogonal complement of $\ker \text{Col}_{T,I_\gq}^\gq$ with respect to the pairing $\langle \sim,\sim \rangle$.
\end{lemma}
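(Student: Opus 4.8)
The plan is to deduce Lemma \ref{orthogonal} from the reciprocity law \eqref{reciprocity} together with the factorization in Lemma \ref{matrices product}. First I would observe that by Lemma \ref{matrices product}, for $x \in H^1_\text{Iw}(K_{\infty,\gq},T)$ and $y \in H^1_\text{Iw}(K_{\infty,\gq},T^\ast(1))$ we have
$$
[\mathcal{L}_{T,\gq}(x),\mathcal{L}_{T^\ast(1),\gq}(y)] = \frac{\log(1+X)}{pX}\sum_{i=1}^d \text{Col}_{T,i}^\gq(x)\cdot \text{Col}_{T^\ast(1),i}^\gq(y),
$$
and combining this with \eqref{reciprocity} gives, since $\frac{\log(1+X)}{pX}$ and $\ell_0$ are non-zero-divisors in $\mathcal{H}_{\widehat{F}_\infty}(\Gamma_p)$ (or after an appropriate normalization of $\ell_0$), an identity of the shape
$$
\langle x,y\rangle = u \cdot \sum_{i=1}^d \text{Col}_{T,i}^\gq(x)\cdot \text{Col}_{T^\ast(1),i}^\gq(y)
$$
for a unit $u$ (up to $\sigma_{-1}$), inside the fraction field, hence after clearing the ``transcendental'' factors. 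The point is that the Tate pairing $\langle\sim,\sim\rangle$ decomposes as a sum over $i \in \{1,\dots,d\}$ of the products of the $i$th components of the two Coleman maps.

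Granting this, I would argue as follows. If $x \in \ker \text{Col}_{T,I_\gq}^\gq$, then $\text{Col}_{T,i}^\gq(x)=0$ for all $i \in I_\gq$; and if $y \in \ker \text{Col}_{T^\ast(1),I_\gq^c}^\gq$, then $\text{Col}_{T^\ast(1),i}^\gq(y)=0$ for all $i \in I_\gq^c$. In the sum $\sum_{i=1}^d \text{Col}_{T,i}^\gq(x)\cdot \text{Col}_{T^\ast(1),i}^\gq(y)$, each index $i$ lies in $I_\gq$ or in $I_\gq^c$, so every term vanishes; hence $\langle x,y\rangle = 0$. This shows $\ker \text{Col}_{T^\ast(1),I_\gq^c}^\gq$ is contained in the orthogonal complement of $\ker \text{Col}_{T,I_\gq}^\gq$. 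For the reverse inclusion, I would use a rank count: by Lemma \ref{rank}, $\ker \text{Col}_{T,I_\gq}^\gq$ has rank $d - \#I_\gq$ over $\Lambda(\Gamma_p)$, so its orthogonal complement inside $H^1_\text{Iw}(K_{\infty,\gq},T^\ast(1))$ (a module of rank $d$, again by \cite[Theorem A.2]{LZ14}) has rank $d - (d-\#I_\gq) = \#I_\gq$; meanwhile $\ker \text{Col}_{T^\ast(1),I_\gq^c}^\gq$ has rank $d - \#I_\gq^c = \#I_\gq$. Since the two $\Lambda(\Gamma_p)$-modules have the same rank and one is contained in the other, they agree up to pseudo-null (pseudo-isomorphism); to upgrade this to an equality I would check that the orthogonal complement is saturated (a direct summand, or at least that the quotient is torsion-free), which follows because $\langle\sim,\sim\rangle$ induces a perfect pairing of the relevant quotients or because $H^1_\text{Iw}(K_{\infty,\gq},T^\ast(1))$ has no nontrivial pseudo-null submodules in this setting.

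The main obstacle I expect is the careful bookkeeping at the level of the perfect-pairing/saturation argument: making sure that ``orthogonal complement'' behaves well over the two-variable Iwasawa algebra $\Lambda(\Gamma_p)$, which is a regular local ring of dimension $3$ but not a PID, so that an inclusion of modules of equal rank need not be an equality. This is exactly the kind of issue handled in \cite{LP17} and \cite{Pon20} in the cyclotomic case, and I would follow their treatment, invoking the non-existence of nonzero pseudo-null submodules of the Iwasawa cohomology (or working with the maximal divisible/saturated submodule) to pass from equality of ranks plus one inclusion to the desired equality. Everything else—the reciprocity law, the determinant computation, the diagonal structure of the Coleman factorization—is available from the results already recorded above, so the proof is essentially a matter of assembling \eqref{reciprocity}, Lemma \ref{matrices product}, and Lemma \ref{rank}.
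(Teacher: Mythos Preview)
Your first inclusion $\ker\text{Col}_{T^\ast(1),I_\gq^c}^\gq\subseteq(\ker\text{Col}_{T,I_\gq}^\gq)^\perp$ matches the paper exactly. The divergence is in the reverse inclusion: you propose a rank-count-plus-saturation argument, whereas the paper (following \cite[Lemma~3.2]{LP17}, the very reference you invoke) gives a direct element-wise argument. Applying Lemma~\ref{rank} with the index set $\{1,\dots,d\}\setminus\{k\}$ for each fixed $k$, one produces an element $x_k\in H^1_\text{Iw}(K_{\infty,\gq},T)$ with $\text{Col}_{T,j}^\gq(x_k)=0$ for all $j\neq k$ but $\text{Col}_{T,k}^\gq(x_k)\neq 0$. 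When $k\in I_\gq^c$ this $x_k$ lies in $\ker\text{Col}_{T,I_\gq}^\gq$; pairing any $y\in(\ker\text{Col}_{T,I_\gq}^\gq)^\perp$ against it and using the factorization of Lemma~\ref{matrices product} then forces $\text{Col}_{T^\ast(1),k}^\gq(y)=0$. Running over all $k\in I_\gq^c$ yields $y\in\ker\text{Col}_{T^\ast(1),I_\gq^c}^\gq$ directly, with no appeal to saturation or pseudo-nullity.

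Your route is not unworkable, but the assertion that the orthogonal complement of a rank-$(d-\#I_\gq)$ submodule has rank exactly $\#I_\gq$ requires the Perrin-Riou pairing to be nondegenerate over $\mathrm{Frac}(\Lambda(\Gamma_p))$, and you have not justified this. The saturation step you sketch is fine (both Coleman kernels and any $M^\perp$ are saturated, for the reasons you indicate), so once the nondegeneracy is supplied your argument would close. The paper's test-element construction is simply shorter and uses nothing about the pairing beyond the reciprocity formula \eqref{reciprocity} already in hand.
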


\begin{proof}
Again, we can follow the exact same arguments as in \cite[Lemma 3.2]{LP17}. Let $x \in H^1_\text{Iw}(K_{\infty,\gq},T)$ and $y \in H^1_\text{Iw}(K_{\infty,\gq},T^\ast (1))$. By the reciprocity law \eqref{reciprocity} and lemma \ref{matrices product}, $\langle x,y\rangle =0$ if and only if $[\mathcal{L}_{T,\gq}(x),\mathcal{L}_{T^\ast (1),\gq}(y)]=0$ which happens if and only if $\text{Col}_T^\gq(x)^t \cdot \text{Col}_{T^\ast (1)}^\gq(y)=0$. Thus, if $x \in \ker \text{Col}_{T,I_\gq}^\gq$, 
\begin{equation}\label{reci}
\langle x,y\rangle = 0 \Leftrightarrow \sum_{k \not\in I_\gq} \text{Col}_{T,k}^\gq(x) \cdot \text{Col}_{T^\ast (1),k}^\gq(y) =0.
\end{equation}
So, $\ker \text{Col}_{T^\ast (1),I_\gq^c}^\gq \subseteq \left( \ker \text{Col}_{T,I_\gq}^\gq \right)^\perp$. Lemma \ref{rank} implies that for all $k \in \{1,\ldots,d\}$, there exists $x_k$ such that
$$
\text{Col}_{T,j}^\gq(x_k) \begin{cases} \text{$=0$ if $j\in \{1,\ldots,d\}\setminus \{k\}$,} \\ \text{$\neq 0$ if $j=k$.} \end{cases}
$$
In particular, if $k \not\in I_\gq$, then such $x_k\in \ker \text{Col}_{T,I_\gq}^\gq$. If $y \in \left( \ker \text{Col}_{T,I_\gq}^\gq \right)^\perp$, then $\langle x_k,y\rangle =0$. Therefore, \eqref{reci} tells us that $\text{Col}_{T^\ast (1),k}^\gq(y)=0$. Since this is true for all $k \in I_\gq^c$, we have $y \in \ker \text{Col}_{T^\ast (1),I_\gq^c}^\gq$ as required.
\end{proof}

\subsection{Strict Selmer groups}

Fix an indexing set $I_\gq \subseteq \{1,\ldots,d\}$. Our next goal is to describe the local condition $H^1_{I_\gq}(K_{\infty,\gq},T^\dagger)$ at the level of $K_{n,\gq}$. Let us make the following assumption:\bigbreak
\noindent \textbf{(Tors)} The Galois cohomology groups $H^0(K_\gq, T/pT)$ and $H^2(K_\gq, T/pT)$ are trivial.

\begin{remark}
Since $T/pT \cong T^\ast(1)^\dagger [p]$, $H^0(K_\gq, T/pT)=0$ is equivalent to $$H^0(K_\gq,T^\ast(1)^\dagger [p])=0.$$ Furthermore, by local Tate duality, $H^2(K_\gq, T/pT)$ is trivial if and only if $H^0(K_\gq, T^\dagger [p])$ is trivial.
\end{remark}
Since $\Gal( K_{\infty,\gq}/K_\gq)$ is a pro-$p$ group, \textbf{(Tors)} combined with the orbit-stabilizer theorem gives that $H^0(K_{\infty,\gq},T^\dagger)=0$. Then, by the inflation-restriction exact sequence, $H^1(K_{\infty,\gq},T^\dagger)^{\Gamma_{p,n}}\cong H^1(K_{n,\gq},T^\dagger)$.
Consider the short exact sequence
$$
0 \to T^\ast(1) \to \mathcal{M}_p^\ast(1) \to T^\dagger \to 0
$$
where the first arrow is the inclusion map and the second arrow is the projection $\mathcal{M}_p^\ast(1) \to \mathcal{M}_p^\ast(1)/T^\ast(1) =T^\ast(1)\otimes \Qp/\Zp \cong T^\dagger$. Note that if $T$ satisfies \textbf{(Tors)}, its Tate dual $T^\ast (1)$ also satisfies it. Thus, we get the attached short exact sequence in cohomology
$$
0\to H^1(K_{n,\gq},T^\ast(1)) \xrightarrow{i_n} H^1(K_{n,\gq},\mathcal{M}_p^\ast(1)) \xrightarrow{\pi_n} H^1(K_{n,\gq},T^\dagger) \to 0.
$$
Define $H^1_{I_\gq}(K_{n,\gq},T^\dagger) \colonequals H^1_{I_\gq}(K_{\infty,\gq},T^\dagger)^{\Gamma_{p,n}} \subseteq H^1(K_{n,\gq},T^\dagger)$. For $n\geq 0$, define the submodule $(\ker \text{Col}_{T^\ast(1),I_\gq^c}^\gq)_n$ to be the image of $\ker \text{Col}_{T^\ast(1),I_\gq^c}^\gq$ under the natural map $$H^1_\text{Iw}(K_{\infty,\gq},T^\ast(1)) \to H^1(K_{n,\gq},T^\ast(1)).$$ The image of $(\ker \text{Col}_{T^\ast(1),I_\gq^c}^\gq)_n$ under the map $i_n$
generates a $\mathbf{Q}_p$-vector space inside $H^1(K_{n,\gq},\mathcal{M}_p^\ast(1))$. We denote its image under $\pi_n$ by $\overline{(\ker \text{Col}_{T^\ast(1),I_\gq^c}^\gq)_n}$. Let $(\ker \text{Col}_{T,I_\gq}^\gq)_n$ and $\overline{(\ker \text{Col}_{T,I_\gq}^\gq)_n}$ be defined in the same way by using the exact sequence
$$
0\to H^1(K_{n,\gq},T) \to H^1(K_{n,\gq},\mathcal{M}_p) \to H^1(K_{n,\gq},T^\ast(1)^\dagger) \to 0
$$
instead.
\begin{lemma}\label{ortho col}
For $n\geq 0$, $\overline{(\ker \text{Col}_{T^\ast (1),I_\gq^c}^\gq)_n}$ is the orthogonal complement of $(\ker \text{Col}_{T,I_\gq}^\gq)_n$ under Tate's local pairing
$$
H^1(K_{n,\gq},T) \times H^1(K_{n,\gq}, T^\ast (1)^\dagger) \to \mathbf{Q}_p/\mathbf{Z}_p.
$$
Furthermore, We have $$\overline{(\ker \text{Col}_{T^\ast(1),I_\gq^c}^\gq)_n} = H^1_{I_\gq}(K_{n,\gq},T^\dagger)\quad \text{and}\quad \overline{(\ker \text{Col}_{T,I_\gq}^\gq)_n} = H^1_{I_\gq}(K_{n,\gq},T^\ast(1)^\dagger).$$
\end{lemma}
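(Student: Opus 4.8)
The plan is to deduce Lemma \ref{ortho col} from the Iwasawa-theoretic orthogonality already established in Lemma \ref{orthogonal} by descending along the Galois group $\Gamma_{p,n}$, and then identifying the resulting orthogonal complement with the local condition $H^1_{I_\gq}(K_{n,\gq},T^\dagger)$ by a rank/dimension count together with the definition of $H^1_{I_\gq}(K_{\infty,\gq},T^\dagger)$ as an orthogonal complement at the infinite level. First I would set up the comparison between Perrin-Riou's $\Lambda(\Gamma_p)$-valued pairing $\langle\sim,\sim\rangle$ and Tate's local pairing at finite level: by the very formula defining $\langle\sim,\sim\rangle$, the image of $\langle x,y\rangle$ under the augmentation-type map $\Lambda(\Gamma_p)\to\Zp[\Gamma_{p}/\Gamma_{p,n}]$ recovers $\sum_{\sigma}\langle x_n,y_n^\sigma\rangle_n\sigma$. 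Combined with \textbf{(Tors)} (which, as noted, forces $H^0(K_{\infty,\gq},T^\dagger)=0$ and hence $H^1(K_{\infty,\gq},T^\dagger)^{\Gamma_{p,n}}\cong H^1(K_{n,\gq},T^\dagger)$, and likewise for $T^\ast(1)$ and $\mathcal{M}_p$), this lets one pass orthogonality statements between the two levels. The key input is that $x\in\ker\text{Col}_{T,I_\gq}^\gq$ is paired trivially (in $\Lambda(\Gamma_p)$) against all of $\ker\text{Col}_{T^\ast(1),I_\gq^c}^\gq$ by Lemma \ref{orthogonal}, so the finite-level pairing of $(\ker\text{Col}_{T,I_\gq}^\gq)_n$ against $(\ker\text{Col}_{T^\ast(1),I_\gq^c}^\gq)_n$ vanishes; passing to the quotient $T^\dagger$ via $\pi_n$ (and using that $i_n$ of $(\ker\text{Col}_{T^\ast(1),I_\gq^c}^\gq)_n$ lands in the appropriate $\Qp$-subspace) shows $\overline{(\ker\text{Col}_{T^\ast(1),I_\gq^c}^\gq)_n}\subseteq\big((\ker\text{Col}_{T,I_\gq}^\gq)_n\big)^\perp$.

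For the reverse inclusion — that this is the \emph{full} orthogonal complement — I would argue by counting coranks/dimensions. Lemma \ref{rank} gives that $\ker\text{Col}_{T,I_\gq}^\gq$ has $\Lambda(\Gamma_p)$-rank $d-\#I_\gq$ and $\ker\text{Col}_{T^\ast(1),I_\gq^c}^\gq$ has rank $d-\#I_\gq^c=d-(d-\#I_\gq)=\#I_\gq$; since the ambient Iwasawa cohomology has rank $d$, these two ranks sum to $d$, which is exactly the numerology of a perfect pairing with orthogonal complements. Specializing at level $n$ — using that $H^1(K_{n,\gq},T)$ and $H^1(K_{n,\gq},T^\ast(1)^\dagger)$ have the expected $\Zp$-coranks (which follows from the local Euler characteristic formula together with \textbf{(Tors)}, the $H^0$ and $H^2$ terms vanishing) — one sees that the submodule and its candidate complement have complementary coranks, and since one is contained in the orthogonal complement of the other and the Tate pairing at level $n$ is perfect, the containment must be an equality (after checking there is no issue with finite torsion, e.g. by working rationally first and then controlling the integral structure, exactly as in the passage from $H^1(K_{n,\gq},T^\ast(1))$ to $H^1(K_{n,\gq},T^\dagger)$ via the snake lemma).

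Finally, for the identification with $H^1_{I_\gq}(K_{n,\gq},T^\dagger)$: by definition $H^1_{I_\gq}(K_{n,\gq},T^\dagger)=H^1_{I_\gq}(K_{\infty,\gq},T^\dagger)^{\Gamma_{p,n}}$, and $H^1_{I_\gq}(K_{\infty,\gq},T^\dagger)$ is the orthogonal complement of $\ker\text{Col}_{T,I_\gq}^\gq$ under local Tate duality over $K_{\infty,\gq}$. Taking $\Gamma_{p,n}$-invariants and using the inflation-restriction identification $H^1(K_{\infty,\gq},T^\dagger)^{\Gamma_{p,n}}\cong H^1(K_{n,\gq},T^\dagger)$, together with compatibility of the Iwasawa-theoretic Tate pairing with the finite-level one, shows $H^1_{I_\gq}(K_{n,\gq},T^\dagger)$ equals the orthogonal complement of $(\ker\text{Col}_{T,I_\gq}^\gq)_n$, which by the first part is $\overline{(\ker\text{Col}_{T^\ast(1),I_\gq^c}^\gq)_n}$. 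The symmetric statement for $T^\ast(1)^\dagger$ follows by interchanging the roles of $T$ and $T^\ast(1)$ and of $I_\gq$ and $I_\gq^c$, since $(T^\ast(1))^\ast(1)=T$.

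I expect the main obstacle to be the \emph{bookkeeping of integral versus rational structures} when pushing local conditions through the exact sequence $0\to T^\ast(1)\to\mathcal{M}_p^\ast(1)\to T^\dagger\to 0$: the images $(\ker\text{Col}_{\cdots})_n$ are only defined up to the $\Qp$-span before projecting, so verifying that $\overline{(\ker\text{Col}_{T^\ast(1),I_\gq^c}^\gq)_n}$ is genuinely the full (and not merely a finite-index sub-) orthogonal complement requires care — this is where one leans on the perfectness of Tate's pairing and the precise corank computations, rather than any soft argument. The analogous delicacy is handled in \cite{LP17} and \cite{Pon20} in the one-variable case, and the two-variable modifications are formal once Proposition \ref{coleman inverse limit} and Lemma \ref{orthogonal} are in place.
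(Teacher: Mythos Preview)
Your proposal is correct and follows essentially the same approach as the paper: the paper's proof simply observes that, once Lemma~\ref{orthogonal} supplies the Iwasawa-level orthogonality, the one-variable argument of \cite[Lemma~1.6]{Pon20} carries over without change to the two-variable setting. Your outline is a faithful unpacking of that cited argument---descending the orthogonality via the compatibility of Perrin-Riou's pairing with finite-level Tate pairings, using the rank computation of Lemma~\ref{rank} together with \textbf{(Tors)} to upgrade the inclusion to an equality, and identifying the result with $H^1_{I_\gq}(K_{n,\gq},T^\dagger)$ via inflation-restriction---and you correctly flag the integral/rational bookkeeping as the point requiring care.
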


\begin{proof}
Since the orthogonality conditions are satisfied by lemma \ref{orthogonal}, the arguments used in the proof of \cite[Lemma 1.6]{Pon20} for one-variable Coleman maps can be applied without modification to our two-variable Coleman maps.
\end{proof}
 
Let $\Sigma_n$ be the set of places above those in $\Sigma$ in $K_n$. Let
$$
\mathcal{P}_{\Sigma_n,\underline{I}}(T^\dagger/K_n) \colonequals \bigoplus_{v \in \Sigma_n, v \nmid p} \frac{H^1(K_{n,v},T^\dagger)}{H^1_f(K_{n,v},T^\dagger)} \bigoplus_{\gq \in \{\gp,\gp^c\}} \frac{H^1(K_{n,\gq},T^\dagger)}{H^1_{I_\gq}(K_{n,\gq},T^\dagger)}
$$
where $H^1_f(K_{n,v},T^\dagger)$ is again the unramified subgroup of $H^1(K_{n,v},T^\dagger)$. We define the strict multi-signed Selmer group at level $n$ by using the local condition $H^1_{I_\gq}(K_{n,\gq},T^\dagger)$,
$$
\Sel_{\underline{I}}^\text{str}(T^\dagger /K_{n}) := \ker \left( H^1_{\Sigma_n}(K_n, T^\dagger) \to \mathcal{P}_{\Sigma_n,\underline{I}}(T^\dagger/K_n) \right).
$$
We define 
$$\text{Sel}_{\underline{I}}^\text{str}(T^\dagger/K_{\infty}) \colonequals \varinjlim_n \text{Sel}_{\underline{I}}^\text{str}(T^\dagger /K_{n})$$ 
where the transition maps are those induced by restriction on cohomology.

\begin{remark}
Here, we mean strict Selmer groups in the sense of Greenberg \cite{Gre89} and not as what is sometime called fine Selmer groups.
\end{remark}

\begin{proposition}\label{strict}
Suppose that \textbf{(Tors)} is valid. The two $\Lambda(\Gamma)$-modules $\Sel_{\underline{I}}(T^\dagger / K_\infty)$ and $\Sel_{\underline{I}}^\text{str}(T^\dagger/K_{\infty})$ are isomorphic.
\end{proposition}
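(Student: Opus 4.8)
The plan is to exhibit $\Sel_{\underline{I}}^\text{str}(T^\dagger/K_\infty)$ as a direct limit of the finite-level defining sequences and match it, summand by summand, with the sequence cutting out $\Sel_{\underline{I}}(T^\dagger/K_\infty)$. For each $n$ we have the left-exact sequence
$$
0 \to \Sel_{\underline{I}}^\text{str}(T^\dagger/K_n) \to H^1_{\Sigma_n}(K_n,T^\dagger) \to \mathcal{P}_{\Sigma_n,\underline{I}}(T^\dagger/K_n),
$$
and since forming $\Sel^\text{str}$ at level $n$ is compatible with the restriction (transition) maps and with localization, applying the exact functor $\varinjlim_n$ gives
$$
0 \to \Sel_{\underline{I}}^\text{str}(T^\dagger/K_\infty) \to \varinjlim_n H^1_{\Sigma_n}(K_n,T^\dagger) \to \varinjlim_n \mathcal{P}_{\Sigma_n,\underline{I}}(T^\dagger/K_n).
$$
Thus it suffices to produce canonical isomorphisms $\varinjlim_n H^1_{\Sigma_n}(K_n,T^\dagger)\cong H^1_\Sigma(K_\infty,T^\dagger)$ and $\varinjlim_n \mathcal{P}_{\Sigma_n,\underline{I}}(T^\dagger/K_n)\cong \mathcal{P}_{\Sigma,\underline{I}}(T^\dagger/K_\infty)$ intertwining the two localization maps, for then $\varinjlim_n\Sel^\text{str}$ is identified with $\ker\bigl(H^1_\Sigma(K_\infty,T^\dagger)\to\mathcal{P}_{\Sigma,\underline{I}}(T^\dagger/K_\infty)\bigr)=\Sel_{\underline{I}}(T^\dagger/K_\infty)$; the resulting identification is $\Gal(K_\infty/K)$-equivariant, hence $\Lambda(\Gamma)$-linear.

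The isomorphism on $H^1_\Sigma$ and on the prime-to-$p$ parts of $\mathcal{P}$ is formal: continuous Galois cohomology with discrete torsion coefficients commutes with the direct limit over the tower $K\subseteq K_1\subseteq\cdots\subseteq K_\infty$ (a continuous cochain valued in the discrete module $T^\dagger$ is locally constant, hence defined over some $K_n$), and the same applies to each local group $H^1(K_{n,v},T^\dagger)$ together with its unramified subgroup $H^1_f(K_{n,v},T^\dagger)$ for $v\nmid p$ — here $K_{\infty,v}/K_v$ is unramified since $K_\infty/K$ is unramified outside $p$ — so the summands of $\mathcal{P}_{\Sigma_n,\underline{I}}$ indexed by $v\nmid p$ pass correctly to the limit.

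The one place where \textbf{(Tors)} enters is the comparison of the summands at $\gq\in\{\gp,\gp^c\}$. By \textbf{(Tors)} we have $H^0(K_{\infty,\gq},T^\dagger)=0$, so inflation--restriction yields $H^1(K_{\infty,\gq},T^\dagger)^{\Gamma_{p,n}}\cong H^1(K_{n,\gq},T^\dagger)$ (both facts are recorded above), while by definition $H^1_{I_\gq}(K_{n,\gq},T^\dagger)=H^1_{I_\gq}(K_{\infty,\gq},T^\dagger)^{\Gamma_{p,n}}=H^1_{I_\gq}(K_{\infty,\gq},T^\dagger)\cap H^1(K_{\infty,\gq},T^\dagger)^{\Gamma_{p,n}}$. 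Consequently the natural map
$$
\frac{H^1(K_{n,\gq},T^\dagger)}{H^1_{I_\gq}(K_{n,\gq},T^\dagger)}\;\cong\;\frac{H^1(K_{\infty,\gq},T^\dagger)^{\Gamma_{p,n}}}{H^1_{I_\gq}(K_{\infty,\gq},T^\dagger)^{\Gamma_{p,n}}}\;\hookrightarrow\;\frac{H^1(K_{\infty,\gq},T^\dagger)}{H^1_{I_\gq}(K_{\infty,\gq},T^\dagger)}
$$
is injective, and since $H^1(K_{\infty,\gq},T^\dagger)$ and its $\Gamma_p$-stable submodule $H^1_{I_\gq}(K_{\infty,\gq},T^\dagger)$ (a $\Lambda(\Gamma_p)$-submodule, being an orthogonal complement of one) are discrete $\Gamma_p$-modules, applying $\varinjlim_n$ to the three-term exact sequences of $\Gamma_{p,n}$-invariants recovers
$$
0\to H^1_{I_\gq}(K_{\infty,\gq},T^\dagger)\to H^1(K_{\infty,\gq},T^\dagger)\to H^1(K_{\infty,\gq},T^\dagger)/H^1_{I_\gq}(K_{\infty,\gq},T^\dagger)\to 0.
$$
This gives the last required identification, and assembling all summands finishes the proof.

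I do not expect a serious obstacle; the only mild care needed is tracking compatibility of the localization maps under the limit and the discreteness of $H^1(K_{\infty,\gq},T^\dagger)$ as a $\Gamma_p$-module. Alternatively, one can argue directly that restriction defines a map $\varinjlim_n\Sel_{\underline{I}}^\text{str}(T^\dagger/K_n)\to\Sel_{\underline{I}}(T^\dagger/K_\infty)$, which is injective because $H^0(K_\infty,T^\dagger)\subseteq H^0(K_{\infty,\gq},T^\dagger)=0$ and surjective by lifting a given class to finite level (using that cohomology commutes with the direct limit over the tower) and checking, via the isomorphism $H^1(K_{\infty,\gq},T^\dagger)^{\Gamma_{p,n}}\cong H^1(K_{n,\gq},T^\dagger)$, that it already satisfies the strict local conditions at that level.
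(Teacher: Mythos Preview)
Your proof is correct and follows essentially the same approach as the paper: both arguments identify the two Selmer groups by passing the defining exact sequence to the direct limit, using exactness of $\varinjlim_n$, the discreteness of $H^1_{I_\gq}(K_{\infty,\gq},T^\dagger)$ as a $\Gamma_p$-module to recover it as $\varinjlim_n$ of its $\Gamma_{p,n}$-invariants, and the definition $H^1_{I_\gq}(K_{n,\gq},T^\dagger)=H^1_{I_\gq}(K_{\infty,\gq},T^\dagger)^{\Gamma_{p,n}}$. Your write-up is a bit more explicit about the global $H^1_\Sigma$ and prime-to-$p$ steps (which the paper absorbs into ``by definition''), but there is no substantive difference.
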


\begin{proof}
Since $H^1_{I_\gq}(K_{\infty,\gq},T^\dagger)$ is a discrete $\Gamma_p$-module, \cite[Proposition 1.1.8]{Neu13} gives
$$
H^1_{I_\gq}(K_{\infty,\gq},T^\dagger) = \varinjlim_n H^1_{I_\gq}(K_{\infty,\gq},T^\dagger)^{\Gamma_{p,n}}.
$$
By the exactness of $\varinjlim_n$, 
$$
\varinjlim_n \frac{H^1(K_{n,\gq},T^\dagger)}{H^1_{I_\gq}(K_{n,\gq},T^\dagger)} \cong \frac{\varinjlim_n H^1(K_{n,\gq},T^\dagger)}{\varinjlim_n H^1_{I_\gq}(K_{n,\gq},T^\dagger)} = \frac{H^1(K_{\infty,\gq},T^\dagger)}{H^1_{I_\gq}(K_{\infty,\gq},T^\dagger)}.
$$
Also, $\varinjlim_n H^1_f(K_{n,v},T^\dagger) = H^1_f(K_{\infty,v},T^\dagger)$ by definition. Thus, the two Selmer groups $\Sel_{\underline{I}}(T^\dagger / K_\infty)$ and $\Sel_{\underline{I}}^\text{str}(T^\dagger/K_{\infty})$ are identified.
\end{proof}
We define $\Sel_{\underline{I}}^\text{str}((T^\ast(1))^\dagger/K_{n})$ with the local condition $H^1_{I_\gq}(K_{n,\gq},(T^\ast (1))^\dagger)$ and let $$\Sel_{\underline{I}}^\text{str}((T^\ast (1))^\dagger/K_{\infty}):= \varinjlim_n \Sel_{\underline{I}}^\text{str}((T^\ast (1))^\dagger/K_{n}).$$ By replacing $T$ with $T^\ast (1)$ in proposition \ref{strict}, we get $$\Sel_{\underline{I}}^\text{str}((T^\ast(1))^\dagger/K_{\infty}) \cong \Sel_{\underline{I}}((T^\ast (1))^\dagger/K_{\infty}).$$

\subsection{Flach's pairing}
In \cite{Fla90}, Flach gives a generalization of the Cassel--Tate pairing for motives over number fields. For $A$ an abelian group, denote by $A_{/\text{div}}$ the quotient of $A$ by its maximal divisible subgroup. The main result of Flach implies the following:

\begin{proposition}
Suppose that \textbf{(Tors)} is valid. There is a perfect pairing 
$$
\text{Sel}_{\underline{I}}^{\text{str}}(T^\dagger/K_{n})_{/\text{div}} \times \text{Sel}_{\underline{I}^c}^{\text{str}}((T^\ast (1))^\dagger/K_{n})_{/\text{div}} \to \mathbf{Q}_p/\mathbf{Z}_p.
$$
\end{proposition}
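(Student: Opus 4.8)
The plan is to deduce this directly from Flach's generalised Cassels--Tate pairing \cite{Fla90}. Recall the shape of that result: for a number field $K_n$, a finite set $\Sigma_n$ of places, a $p$-divisible discrete $G_{K_n}$-module $A$ of finite corank unramified outside $\Sigma_n$, its Tate dual $\widehat A := \Hom(T_p A,\mu_{p^\infty})$, and, for each $v \in \Sigma_n$, a choice of local subgroup $L_v \subseteq H^1(K_{n,v},A)$ with exact orthogonal complement $L_v^\perp \subseteq H^1(K_{n,v},\widehat A)$ under local Tate duality, Flach produces a canonical pairing between the Selmer group of $A$ cut out by the $L_v$ and the Selmer group of $\widehat A$ cut out by the $L_v^\perp$, which becomes perfect after quotienting each side by its maximal divisible subgroup. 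I would apply this with $A = T^\dagger$ (so $\widehat A = (T^\ast(1))^\dagger$), with $\Sigma_n$ the set of places of $K_n$ above $\Sigma$, with $L_v = H^1_f(K_{n,v},T^\dagger)$ for $v \nmid p$, and with $L_\gq = H^1_{I_\gq}(K_{n,\gq},T^\dagger)$ for $\gq \mid p$; the Selmer group cut out by these local conditions is by definition $\Sel_{\underline I}^{\mathrm{str}}(T^\dagger/K_n)$.

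The content of the argument is then the identification of the dual conditions $L_v^\perp$ with the local conditions defining $\Sel_{\underline{I}^c}^{\mathrm{str}}((T^\ast(1))^\dagger/K_n)$. For $v \in \Sigma_n$ with $v \nmid p$ I would invoke the classical fact that the unramified subgroup is its own exact orthogonal complement under local Tate duality at a place of residue characteristic $\neq p$. For $\gq \mid p$ the required equality $H^1_{I_\gq}(K_{n,\gq},T^\dagger)^\perp = H^1_{I_\gq^c}(K_{n,\gq},(T^\ast(1))^\dagger)$ is exactly Lemma \ref{ortho col}: unwinding the definitions, the first group is $\overline{(\ker\col_{T^\ast(1),I_\gq^c}^\gq)_n}$ and the second is $\overline{(\ker\col_{T,I_\gq}^\gq)_n}$, and Lemma \ref{ortho col} --- applied to $T$, and symmetrically to $T^\ast(1)$ --- displays each of these as the exact orthogonal complement of the other. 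Here the hypothesis \textbf{(Tors)} is what makes the strict Selmer groups the correct objects to pair: it forces the relevant $H^0$'s to vanish and yields the identifications $H^1_{I_\gq}(K_{\infty,\gq},-)^{\Gamma_{p,n}}\cong H^1_{I_\gq}(K_{n,\gq},-)$ used implicitly above, and it also guarantees that both Selmer groups are cofinitely generated $\Zp$-modules, so that the quotients by their maximal divisible subgroups are finite and ``perfect pairing'' means a perfect pairing of finite groups.

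The only genuinely delicate point, I expect, is the bookkeeping rather than any new idea: one must keep track of which of the several local pairings is being used at each stage --- the Iwasawa-theoretic Perrin--Riou pairing of Lemma \ref{orthogonal}, the level-$n$ Tate pairing between a lattice and its divisible dual appearing in Lemma \ref{ortho col}, and the level-$n$ Tate pairing between the two divisible modules $T^\dagger$ and $(T^\ast(1))^\dagger$ that Flach's construction uses --- and one must follow the index swap $I_\gq \leftrightarrow I_\gq^c$ that occurs upon passing from $T$ to its Tate dual $T^\ast(1)$, which is precisely the source of the $\underline I$ versus $\underline{I}^c$ asymmetry in the statement. Once these identifications are carried out, the proposition is an immediate consequence of the main theorem of \cite{Fla90}.
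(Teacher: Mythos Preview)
Your proposal is correct and follows essentially the same approach as the paper: verify that the local conditions at each place are mutual orthogonal complements (at $\gq\mid p$ via Lemma~\ref{ortho col}, at $v\nmid p$ via the standard fact that unramified conditions are self-dual, for which the paper cites \cite[Proposition~3.8]{BK90}), and then invoke \cite[Theorem~1]{Fla90}. Your additional remarks on the role of \textbf{(Tors)} and the $I_\gq\leftrightarrow I_\gq^c$ bookkeeping are accurate elaborations of what the paper leaves implicit.
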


\begin{proof}
For $v$ a place above $p$ in $K_n$, lemma \ref{ortho col} shows that the local condition defining $\Sel_{\underline{I}}^{\text{str}}(T^\dagger/K_n)$ is the orthogonal complement of the one defining $\Sel_{\underline{I}^c}^{\text{str}}(T^\ast (1)^\dagger/K_n)$. For $v \nmid p$,  \cite[Proposition 3.8]{BK90} shows that $H^1_f(K_{n,v},T^\dagger)^\perp = H^1_f(K_{n,v},T^\ast (1)^\dagger)$. So we can apply \cite[Theorem 1]{Fla90}.
\end{proof}

\section{\texorpdfstring{$\Gamma$-systems}{Gamma-systems}}\label{gamma-systems}

We recall the theory of $\Gamma$-systems from \cite{LLTT}. For this section only, $\Gamma$ will be any abelian $p$-adic Lie group isomorphic to $\mathbf{Z}_p^d$ for some $d \geq 1$. Let $\Lambda(\Gamma) = \mathbf{Z}_p \llbracket T_1,\ldots,T_d \rrbracket$ be the associated Iwasawa algebra. Put $\Gamma_n := \Gamma /\Gamma^{p^n}$. Consider a collection
$$
\mathfrak{A} = \{ \mathfrak{a}_n, \mathfrak{b}_n, \langle \sim,\sim \rangle_n ,r_m^n, c_m^n : n,m \in \mathbf{Z}_{\geq 0}, n \geq m \}
$$
where \\
($\Gamma$-1) $\mathfrak{a}_n$ and $\mathfrak{b}_n$ are finite abelian groups with an action of $\Lambda(\Gamma)$ factoring through $\mathbf{Z}_p[\Gamma_n]$. \\
($\Gamma$-2) For $n \geq m$,
$$
r_m^n: \mathfrak{a}_m \times \mathfrak{b}_m \to \mathfrak{a}_n \times \mathfrak{b}_n,
$$
$$
c_m^n: \mathfrak{a}_n \times \mathfrak{b}_n \to \mathfrak{a}_m \times \mathfrak{b}_m
$$
are $\Gamma$-morphisms such that $r_m^n(\mathfrak{a}_m) \subseteq \mathfrak{a}_n$, $r_m^n(\mathfrak{b}_m) \subseteq \mathfrak{b}_n$, $c_m^n(\mathfrak{a}_n) \subseteq \mathfrak{a}_m$, $c_m^n(\mathfrak{b}_n) \subseteq \mathfrak{b}_m$ and $c_n^n=r_n^n=\text{id}$. Also, $\{ \mathfrak{a}_n \times \mathfrak{b}_n, r_m^n \}_n$ form an inductive system and $\{ \mathfrak{a}_n \times \mathfrak{b}_n, c_m^n \}_n$ form a projective system. \\
($\Gamma$-3) We have 
$$r_m^n \circ c_m^n = N_{\Gamma_n /\Gamma_m}: \mathfrak{a}_n \times \mathfrak{b}_n \to \mathfrak{a}_n \times \mathfrak{b}_n
$$
(where $N_{\Gamma_n /\Gamma_m}:=\sum_{\sigma \in \ker (\Gamma_n \to \Gamma_m)}\sigma$ is the norm associated with $\Gamma_n \twoheadrightarrow \Gamma_m$) and 
$$
c_m^n \circ r_m^n = p^{d(n-m)}\cdot \text{id} : \mathfrak{a}_m \times \mathfrak{b}_m \to \mathfrak{a}_m \times \mathfrak{b}_m.
$$
($\Gamma$-4) For each $n$, $\langle \sim, \sim \rangle_n : \mathfrak{a}_n \times \mathfrak{b}_n \to \mathbf{Q}_p / \mathbf{Z}_p$ is a perfect pairing respecting $\Gamma$-action as well as the morphisms $c_m^n$ and $r_m^n$ in the sense that
$$
\langle \gamma\cdot a , \gamma \cdot b \rangle_n = \langle a,b \rangle_n \ \forall \gamma \in \Gamma,
$$
$$
\langle a, r_m^n(b) \rangle_n = \langle c_m^n (a),b \rangle_m
$$
and
$$
\langle r_m^n (a),b \rangle_n = \langle a,c_m^n(b) \rangle_m.
$$
Write $\mathfrak{a} := \varprojlim_{n}\mathfrak{a}_n$ and $\mathfrak{b}:=\varprojlim_n \mathfrak{b}_n$. If condition ($\Gamma$-1) through ($\Gamma$-4) hold and $\mathfrak{a}$ and $\mathfrak{b}$ are both finitely generated torsion $\Lambda(\Gamma)$-module, we call $\mathfrak{A}$ a $\Gamma$-system. \bigbreak

Suppose that $N$ is a finitely generated torsion $\Lambda(\Gamma)$-module. There is a pseudo-isomorphism
$$
N \to \bigoplus_{i=1}^m \Lambda(\Gamma)/\xi_i^{r_i} \Lambda(\Gamma)
$$
where each $\xi_i$ is irreducible and $r_i$ are non-negative integers. The characteristic ideal of $N$ is $\chi(N)\colonequals \prod_{i=1}^m \xi_i^{r_i}$ and we let 
$$
[N] \colonequals \bigoplus_{i=1}^m \Lambda(\Gamma)/\xi_i^{r_i} \Lambda(\Gamma).
$$
We say that $f \in \Lambda(\Gamma)$ is a simple element if there exists $\gamma \in \Gamma \setminus \Gamma^{p}$ and $\zeta \in \mu_{p^\infty}$ such that $f=f_{\gamma,\zeta}$, where
$$
f_{\gamma,\zeta}\colonequals \prod_{\sigma \in \text{Gal}(\mathbf{Q}_p(\zeta)/\mathbf{Q}_p)} (\gamma - \sigma(\zeta)).
$$
Define $[N]_\text{si}$ as the sum over the $\xi_i$ which are simple element and $[N]_\text{ns}$ as its complement. We get a decomposition $[N]=[N]_\text{si} \oplus [N]_\text{ns}$. Let $\iota$ be the involution on $\Lambda(\Gamma)$ induced by the map $\sigma \mapsto \sigma^{-1}$ on $\Gamma$. Let $\lambda \in \Lambda(\Gamma)$ and $x \in N$. The $\Lambda(\Gamma)$-module with the same underlying group as $N$ but with twisted action $\lambda \cdot x = \iota(\lambda) x$ will be denoted by $N^\iota$. 

\begin{lemma}\label{involution simple}
We have $[N]_\text{si}^\iota = [N]_\text{si}$.
\end{lemma}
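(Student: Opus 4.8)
The plan is to show that the simple part of the characteristic module of a finitely generated torsion $\Lambda(\Gamma)$-module $N$ is invariant under the involution $\iota$. The key observation is that it suffices to check this on the level of simple elements themselves: if $f = f_{\gamma,\zeta}$ is a simple element, I claim that $\iota(f)$ is, up to a unit, again a simple element of the same shape, and moreover that $\iota(f)$ generates the same ideal as $f_{\gamma',\zeta}$ for a suitable $\gamma' \in \Gamma \setminus \Gamma^p$. Granting this, the decomposition $N \to \bigoplus_i \Lambda(\Gamma)/\xi_i^{r_i}\Lambda(\Gamma)$ induces a pseudo-isomorphism $N^\iota \to \bigoplus_i \Lambda(\Gamma)/\iota(\xi_i)^{r_i}\Lambda(\Gamma)$, so that $[N^\iota] = \bigoplus_i \Lambda(\Gamma)/\iota(\xi_i)^{r_i}\Lambda(\Gamma)$; since applying $\iota$ takes simple elements to simple elements and non-simple irreducibles to non-simple irreducibles (because $\iota$ is a ring automorphism, it preserves irreducibility, and the two preceding claims show it preserves the subclass of simple elements), we get $[N]_{\mathrm{si}}^\iota = [N^\iota]_{\mathrm{si}}$; but the statement to prove is about $[N]_{\mathrm{si}}$ itself, so I will actually argue that $[N]_{\mathrm{si}}$, as an abstract $\Lambda(\Gamma)$-module equipped with the $\iota$-twist, is isomorphic to $[N]_{\mathrm{si}}$ — equivalently that the multiset $\{\xi_i^{r_i} : \xi_i \text{ simple}\}$ is stable under $\iota$ up to units.

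Concretely, the main computation is to analyze $\iota(f_{\gamma,\zeta})$. Writing $f_{\gamma,\zeta} = \prod_{\sigma \in \Gal(\Qp(\zeta)/\Qp)} (\gamma - \sigma(\zeta))$, one has $\iota(\gamma - \sigma(\zeta)) = \gamma^{-1} - \sigma(\zeta) = -\sigma(\zeta)\gamma^{-1}(\gamma - \sigma(\zeta)^{-1})$. Now $\sigma(\zeta)^{-1}$ runs over the same Galois orbit as $\sigma(\zeta)$ (the set of primitive roots of unity of the same order is Galois-stable and closed under inversion), so the product $\prod_\sigma (\gamma - \sigma(\zeta)^{-1})$ equals $\prod_\sigma(\gamma - \sigma(\zeta)) = f_{\gamma,\zeta}$ up to reindexing — wait, one must be careful: $f_{\gamma,\zeta}$ was defined using $\gamma$, not $\gamma^{-1}$. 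So in fact $\iota(f_{\gamma,\zeta}) = \prod_\sigma(\gamma^{-1} - \sigma(\zeta))$, and the right-hand side is, by the inversion-of-orbit argument, $u \cdot \prod_\sigma (\gamma^{-1} - \sigma(\zeta))$ where after the substitution $\sigma(\zeta) \mapsto \sigma(\zeta)^{-1}$ one sees this is $f_{\gamma^{-1},\zeta}$ up to a unit $u \in \Lambda(\Gamma)^\times$ (a product of $-\sigma(\zeta)\gamma^{-1}$, each factor a unit since $\sigma(\zeta) \in \mu_{p^\infty}$ and $\gamma$ is a group element). Since $\gamma \in \Gamma \setminus \Gamma^p \iff \gamma^{-1} \in \Gamma \setminus \Gamma^p$, the element $f_{\gamma^{-1},\zeta}$ is again simple. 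Thus $\iota$ maps the class of simple elements (up to units) bijectively to itself.

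With that in hand, I would finish as follows. Fix the pseudo-isomorphism $N \sim \bigoplus_{i} \Lambda(\Gamma)/\xi_i^{r_i}\Lambda(\Gamma)$ defining $[N]$, and split the index set into $S$ (those $i$ with $\xi_i$ simple) and its complement, so $[N]_{\mathrm{si}} = \bigoplus_{i \in S}\Lambda(\Gamma)/\xi_i^{r_i}\Lambda(\Gamma)$. Applying $\iota$ termwise, $([N]_{\mathrm{si}})^\iota \cong \bigoplus_{i \in S}\Lambda(\Gamma)/\iota(\xi_i)^{r_i}\Lambda(\Gamma)$, using that $\Lambda(\Gamma)/\mathfrak{c})^\iota \cong \Lambda(\Gamma)/\iota(\mathfrak{c})$ for any ideal $\mathfrak{c}$. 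By the previous paragraph each $\iota(\xi_i)$ with $i \in S$ is, up to a unit, a simple element, hence generates an ideal of the form $\xi_{j}\Lambda(\Gamma)$ for some simple $\xi_j$; since $\iota$ is an automorphism it permutes the irreducible factors of $\chi(N)$, and because it preserves the simple/non-simple dichotomy it restricts to a permutation of $S$ (preserving the exponents $r_i$, as $\iota$ is multiplicative and the factorization into prime powers is unique). Therefore $\bigoplus_{i \in S}\Lambda(\Gamma)/\iota(\xi_i)^{r_i}\Lambda(\Gamma) \cong \bigoplus_{i \in S}\Lambda(\Gamma)/\xi_i^{r_i}\Lambda(\Gamma) = [N]_{\mathrm{si}}$, which gives $[N]_{\mathrm{si}}^\iota = [N]_{\mathrm{si}}$ as desired.

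The main obstacle is the bookkeeping in the unit-adjustment step: one must verify carefully that $\prod_{\sigma}(-\sigma(\zeta)\gamma^{-1})$ is genuinely a unit in $\Lambda(\Gamma)$ (it is, since each $\sigma(\zeta)$ is a root of unity, hence a unit, and $\gamma^{-1} \in \Gamma \subseteq \Lambda(\Gamma)^\times$), and that the reindexing $\sigma(\zeta) \mapsto \sigma(\zeta)^{-1}$ really is a bijection of the Galois orbit fixing its cardinality (clear, since $x \mapsto x^{-1}$ commutes with the $\Gal(\Qp(\zeta)/\Qp)$-action and preserves multiplicative order). Everything else is formal manipulation of the structure theorem for finitely generated torsion $\Lambda(\Gamma)$-modules together with the fact that $\iota$ is a ring automorphism.
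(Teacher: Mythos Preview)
The paper does not give its own proof here; it simply cites \cite[Section 2.2, equation (9)]{LLTT}. Your write-up supplies the argument behind that citation, and the core computation is correct: from $\iota(\gamma - \sigma(\zeta)) = -\sigma(\zeta)\gamma^{-1}(\gamma - \sigma(\zeta)^{-1})$ together with the fact that $\zeta \mapsto \zeta^{-1}$ is a bijection of the Galois orbit, you get $\iota(f_{\gamma,\zeta}) = u \cdot f_{\gamma,\zeta}$ with $u = \prod_\sigma(-\sigma(\zeta))\cdot \gamma^{-[\Qp(\zeta):\Qp]} \in \Lambda(\Gamma)^\times$. That is exactly what is needed: each simple ideal $(\xi_i)$ is \emph{fixed} by $\iota$, so $(\Lambda/\xi_i^{r_i})^\iota \cong \Lambda/\xi_i^{r_i}$ term by term.

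Where your exposition goes off the rails is the ``wait, one must be careful'' detour and the final paragraph. After correctly establishing $\iota(f_{\gamma,\zeta}) \sim f_{\gamma,\zeta}$, you retreat to the weaker statement $\iota(f_{\gamma,\zeta}) \sim f_{\gamma^{-1},\zeta}$ (merely ``simple again''), and then try to close with a permutation argument: ``$\iota$ permutes the irreducible factors of $\chi(N)$ \ldots restricts to a permutation of $S$.'' This step is not justified. A ring automorphism permutes \emph{all} irreducibles of $\Lambda(\Gamma)$ up to associates, but there is no reason it should permute the particular finite set $\{\xi_i : i \in S\}$ occurring in $N$ unless you already know $\iota(\xi_i) \sim \xi_j$ for some $j \in S$; knowing only that $\iota(\xi_i)$ is simple does not force it to appear among the factors of $\chi(N)$. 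The fix is to drop the detour and the permutation argument entirely and use the stronger fact you proved first: $(\iota(\xi_i)) = (\xi_i)$ for each simple $\xi_i$, hence $[N]_{\mathrm{si}}^\iota = \bigoplus_{i\in S}\Lambda/\iota(\xi_i)^{r_i} = \bigoplus_{i\in S}\Lambda/\xi_i^{r_i} = [N]_{\mathrm{si}}$ directly.
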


\begin{proof}
This is \cite[Section 2.2 equation (9)]{LLTT}.
\end{proof}

To get a similar result for the non-simple part, one need to build up a new system from the $\Gamma$-system $\mathfrak{A}$. Denote by $r_n$ the natural morphism $\mathfrak{a}_n \to \varinjlim_m \mathfrak{a}_m$ and its kernel by $\mathfrak{a}_n^0$. The module $\mathfrak{b}_n^0$ is defined similarly. Write $\mathfrak{a}_n^1$ (resp. $\mathfrak{b}_n^1$) for the annihilator of $\mathfrak{b}_n^0$ (resp. $\mathfrak{a}_n^0$) with respect to the perfect pairing $\langle \sim,\sim \rangle_n$. Define $\mathfrak{a}_n^\prime$ to be the image of $\mathfrak{a}_n^1$ under the quotient map $\mathfrak{a}_n \to \mathfrak{a}_n/\mathfrak{a}_n^0$. The module $\mathfrak{b}_n^\prime$ is defined similarly. Let $\mathfrak{a}^i \colonequals \varprojlim_n \mathfrak{a}_n^i$, $\mathfrak{b}^i \colonequals \varprojlim_n \mathfrak{b}_n^i$ ($i=0,1$), $\mathfrak{a}^\prime \colonequals \varprojlim_n \mathfrak{a}_n^\prime$ and $\mathfrak{b}^\prime \colonequals \varprojlim_n \mathfrak{b}_n^\prime$ where the transition maps are induced by $c_m^n$.

\begin{lemma}\label{prime}
The following statements are valid. \bigbreak
(a) We have isomorphisms $\varinjlim_n \mathfrak{b}_n/\mathfrak{b}_n^0 \cong (\mathfrak{a}^1)^\vee$ and $\varinjlim_n \mathfrak{a}_n/\mathfrak{a}_n^0 \cong (\mathfrak{b}^1)^\vee$ where $()^\vee$ is the Pontryagin dual. \bigbreak
(b) There are short exact sequence of $\Lambda(\Gamma)$-modules
$$
0\to \mathfrak{a}_n^0 \to \mathfrak{a} \to \mathfrak{a}^\prime \to 0,
$$
$$
0\to \mathfrak{b}_n^0 \to \mathfrak{b} \to \mathfrak{b}^\prime \to 0.
$$
\end{lemma}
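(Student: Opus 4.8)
The plan is to prove both parts with the same two ingredients: the annihilator duality for perfect pairings of finite abelian groups (the annihilator of a subgroup is canonically the Pontryagin dual of the corresponding quotient), and the Mittag--Leffler property of inverse systems of finite groups, so that $\varprojlim^1$ vanishes and $\varprojlim$ is exact on them.

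For part (a), I would first record that for any subgroup $H\leq\mathfrak b_n$ the perfectness of $\langle\sim,\sim\rangle_n$ makes the induced pairing $H^\perp\times(\mathfrak b_n/H)\to\mathbf{Q}_p/\mathbf{Z}_p$ perfect, so $a\mapsto\langle a,\sim\rangle_n$ is an isomorphism $H^\perp\xrightarrow{\sim}(\mathfrak b_n/H)^\vee$; taking $H=\mathfrak b_n^0$ gives $\mathfrak a_n^1\cong(\mathfrak b_n/\mathfrak b_n^0)^\vee$, and it is $\Gamma$-equivariant for the contragredient action on the dual because $\langle\gamma a,\gamma b\rangle_n=\langle a,b\rangle_n$. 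Then I would check compatibility with the structure maps: $r_n\circ r_m^n=r_m$ gives $r_m^n(\mathfrak b_m^0)\subseteq\mathfrak b_n^0$, hence an induced $\overline{r}_m^n\colon\mathfrak b_m/\mathfrak b_m^0\to\mathfrak b_n/\mathfrak b_n^0$, and the adjunction $\langle c_m^n(a),b\rangle_m=\langle a,r_m^n(b)\rangle_n$ of ($\Gamma$-4) shows at once that $c_m^n(\mathfrak a_n^1)\subseteq\mathfrak a_m^1$ and that the displayed isomorphisms intertwine $c_m^n$ with $(\overline{r}_m^n)^\vee$. Passing to the inverse limit over $c_m^n$, and using that Pontryagin duality converts the filtered colimit $\varinjlim_n\mathfrak b_n/\mathfrak b_n^0$ into $\varprojlim_n(\mathfrak b_n/\mathfrak b_n^0)^\vee$, we get $\mathfrak a^1\cong(\varinjlim_n\mathfrak b_n/\mathfrak b_n^0)^\vee$; dualizing once more (the colimit is a discrete torsion group, so biduality is canonical) yields $(\mathfrak a^1)^\vee\cong\varinjlim_n\mathfrak b_n/\mathfrak b_n^0$ as $\Lambda(\Gamma)$-modules. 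The second isomorphism is the same argument with $\mathfrak a$ and $\mathfrak b$ interchanged.

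For part (b) --- reading the left-hand term as $\mathfrak a^0\colonequals\varprojlim_n\mathfrak a_n^0$ --- I would first check that $r_m^n$ and $c_m^n$ carry each of $\mathfrak a_n^0,\mathfrak a_n^1,\mathfrak b_n^0,\mathfrak b_n^1$ into the corresponding object one level over, so all the relevant limits are defined: $r_m^n(\mathfrak a_m^0)\subseteq\mathfrak a_n^0$ is again $r_n\circ r_m^n=r_m$; $c_m^n(\mathfrak a_n^0)\subseteq\mathfrak a_m^0$ follows from ($\Gamma$-3), since for $a\in\mathfrak a_n^0$ one has $r_m(c_m^n a)=r_n(r_m^n c_m^n a)=r_n(N_{\Gamma_n/\Gamma_m}a)=N_{\Gamma_n/\Gamma_m}r_n(a)=0$ by $\Gamma$-equivariance of $r_n$; and $c_m^n(\mathfrak a_n^1)\subseteq\mathfrak a_m^1$ was seen in (a). Now apply the Milnor $\varprojlim$--$\varprojlim^1$ exact sequence to $0\to\mathfrak a_n^0\to\mathfrak a_n\to\mathfrak a_n/\mathfrak a_n^0\to0$; the $\mathfrak a_n^0$ being finite, $\varprojlim^1\mathfrak a_n^0=0$, so $0\to\mathfrak a^0\to\mathfrak a\to\varprojlim_n(\mathfrak a_n/\mathfrak a_n^0)\to0$ is exact, and it remains to identify $\varprojlim_n(\mathfrak a_n/\mathfrak a_n^0)$ with $\mathfrak a'=\varprojlim_n\mathfrak a_n'$. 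For this, the annihilator dictionary gives $\mathfrak a_n^1=(\mathfrak b_n^0)^\perp$ and $\mathfrak a_n^0=(\mathfrak b_n^1)^\perp$ (double annihilator), hence $\mathfrak a_n^0+\mathfrak a_n^1=(\mathfrak b_n^0\cap\mathfrak b_n^1)^\perp$ (using $H^\perp+K^\perp=(H\cap K)^\perp$) and $\mathfrak a_n/(\mathfrak a_n^0+\mathfrak a_n^1)\cong(\mathfrak b_n^0\cap\mathfrak b_n^1)^\vee$; since $\mathfrak b_n^0=\ker r_n$, every element of $\mathfrak b_n^0$ is killed by some $r_n^N$, so $\varinjlim_n\mathfrak b_n^0=0$, a fortiori $\varinjlim_n(\mathfrak b_n^0\cap\mathfrak b_n^1)=0$, and therefore $\varprojlim_n\mathfrak a_n/(\mathfrak a_n^0+\mathfrak a_n^1)=0$. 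Plugging $0\to\mathfrak a_n'\to\mathfrak a_n/\mathfrak a_n^0\to\mathfrak a_n/(\mathfrak a_n^0+\mathfrak a_n^1)\to0$ into $\varprojlim$ (with $\varprojlim^1\mathfrak a_n'=0$, again by finiteness) identifies $\mathfrak a'\xrightarrow{\sim}\varprojlim_n(\mathfrak a_n/\mathfrak a_n^0)$, so $0\to\mathfrak a^0\to\mathfrak a\to\mathfrak a'\to0$ is exact; it is $\Lambda(\Gamma)$-linear because every map in sight is. The sequence for $\mathfrak b$ follows by symmetry.

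I expect the main difficulty to be bookkeeping rather than conceptual: keeping track of which of the adjunctions ($\Gamma$-3)--($\Gamma$-4) pushes each of $\mathfrak a_n^0,\mathfrak a_n^1,\mathfrak b_n^0,\mathfrak b_n^1$ (and their sums and intersections) along $r_m^n$ and $c_m^n$, and getting the contragredient $\Gamma$-action right so that the duality isomorphisms are genuinely $\Lambda(\Gamma)$-linear. The one genuinely delicate point in (b) is that $\mathfrak a_n'$ is in general a proper submodule of $\mathfrak a_n/\mathfrak a_n^0$, so the surjection $\mathfrak a\twoheadrightarrow\mathfrak a'$ is invisible level by level; it appears only because the defect $\mathfrak a_n/(\mathfrak a_n^0+\mathfrak a_n^1)$ is Pontryagin-dual to a subgroup of $\ker r_n$, whose colimit --- hence whose inverse limit of duals --- vanishes.
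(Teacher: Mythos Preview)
Your argument is correct and complete. The paper itself does not give a proof here but simply refers to \cite[Lemma~4.4]{AL21}, which in turn draws on the general $\Gamma$-system formalism of \cite{LLTT}; your write-up spells out precisely the annihilator/duality and Mittag--Leffler argument that underlies those references, so there is no meaningful difference in approach to report. Two small remarks: you correctly read the left-hand term of the sequences in (b) as $\mathfrak a^0=\varprojlim_n\mathfrak a_n^0$ (the subscript $n$ in the paper's display is a typo), and your identification $\varprojlim_n(\mathfrak a_n/\mathfrak a_n^0)\cong\mathfrak a'$ via the vanishing of $\varinjlim_n(\mathfrak b_n^0\cap\mathfrak b_n^1)$ is exactly the point that makes part (b) work.
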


\begin{proof}
See \cite[Lemma 4.4]{AL21}.
\end{proof}

The upshot of this new system is that it is well behaved under the involution $\iota$.
\begin{proposition}\label{non simple}
Let $\mathfrak{A}$ be a $\Gamma$-system. Then we have 
$$
[\mathfrak{a}^\prime]^\iota_\text{ns} = [\mathfrak{b}^\prime]_\text{ns}.
$$
\end{proposition}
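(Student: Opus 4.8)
The plan is to exploit the self-duality of the refined system $\{\mathfrak{a}_n^\prime,\mathfrak{b}_n^\prime\}$ coming from the perfect pairings $\langle\sim,\sim\rangle_n$, and to reduce the claim to the behaviour of characteristic ideals under the norm maps in a $\mathbf{Z}_p^d$-tower. First I would check that the perfect pairing $\langle\sim,\sim\rangle_n$ on $\mathfrak{a}_n\times\mathfrak{b}_n$ descends to a perfect pairing $\mathfrak{a}_n^\prime\times\mathfrak{b}_n^\prime\to\mathbf{Q}_p/\mathbf{Z}_p$: by definition $\mathfrak{a}_n^1$ is the annihilator of $\mathfrak{b}_n^0$ and $\mathfrak{b}_n^1$ the annihilator of $\mathfrak{a}_n^0$, so the induced pairing between $\mathfrak{a}_n^1/\mathfrak{a}_n^0 = \mathfrak{a}_n^\prime$ and $\mathfrak{b}_n^1/\mathfrak{b}_n^0 = \mathfrak{b}_n^\prime$ is well-defined and nondegenerate on both sides. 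Combined with the compatibility $\langle r_m^n a, b\rangle_n = \langle a, c_m^n b\rangle_m$ from ($\Gamma$-4), this says precisely that $\mathfrak{b}^\prime = \varprojlim_n \mathfrak{b}_n^\prime$ is (up to the involution $\iota$) the Pontryagin dual of $\varinjlim_n \mathfrak{a}_n^\prime$ as a $\Lambda(\Gamma)$-module, and symmetrically.

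Next I would bring in the duality between projective and injective limits: for a $\Lambda(\Gamma)$-module that is cofinitely generated, $\big(\varinjlim_n M_n\big)^\vee = \varprojlim_n M_n^\vee$, and taking Pontryagin duals interchanges the $\Gamma$-action with its $\iota$-twist. So from the perfect pairing on the primed modules one gets a pseudo-isomorphism $\mathfrak{b}^\prime \sim (\mathfrak{a}^{\prime\vee})^\iota$ where $\mathfrak{a}^{\prime\vee} \colonequals (\varinjlim_n \mathfrak{a}_n^\prime)^\vee$. The remaining point is to identify $\mathfrak{a}^{\prime\vee}$ with $\mathfrak{a}^\prime$ up to pseudo-isomorphism — i.e.\ that the inverse limit $\varprojlim_n \mathfrak{a}_n^\prime$ along the $c_m^n$ and the Pontryagin dual of the direct limit $\varinjlim_n \mathfrak{a}_n^\prime$ along the $r_m^n$ have the same characteristic ideal. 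This is where the relations in ($\Gamma$-3), $r_m^n\circ c_m^n = N_{\Gamma_n/\Gamma_m}$ and $c_m^n\circ r_m^n = p^{d(n-m)}$, do the work: they force the difference between these two limits to be supported on the ``trivial-zero'' locus, and in particular $[\mathfrak{a}^\prime] = [\mathfrak{a}^{\prime\vee}]$ after discarding simple factors. At the level of characteristic ideals this is essentially the content of \cite[Lemma 4.4]{AL21} together with \cite[Section 2.2]{LLTT}; I would cite those and assemble the statement $[\mathfrak{a}^\prime]^\iota_\text{ns} = [\mathfrak{b}^\prime]_\text{ns}$, noting that passing to the non-simple part is exactly what kills the error terms from the norm maps (the simple elements $f_{\gamma,\zeta}$ are precisely those whose zero sets meet the relevant torsion locus).

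The main obstacle I anticipate is the last step: controlling the discrepancy between $\varprojlim \mathfrak{a}_n^\prime$ and $(\varinjlim \mathfrak{a}_n^\prime)^\vee$. One must show that although these two $\Lambda(\Gamma)$-modules need not be isomorphic — the norm maps $N_{\Gamma_n/\Gamma_m}$ are far from isomorphisms — their characteristic ideals differ only by a product of simple elements, so that the equality holds after projecting to the non-simple part. Concretely I would argue that any height-one prime of $\Lambda(\Gamma)$ not of the form $(f_{\gamma,\zeta})$ localizes the norm maps to isomorphisms (since $N_{\Gamma_n/\Gamma_m}$ becomes a unit away from the augmentation-type ideals and their conjugates), whence $\mathfrak{a}^\prime$ and $\mathfrak{a}^{\prime\vee}$ have the same localization there; applying Lemma \ref{involution simple} to discard the simple part of the discrepancy and combining with the duality $\mathfrak{b}^\prime \sim (\mathfrak{a}^{\prime\vee})^\iota$ then yields $[\mathfrak{a}^\prime]^\iota_\text{ns} = [\mathfrak{b}^\prime]_\text{ns}$. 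If the paper \cite{LLTT} already packages this into a statement about $\Gamma$-systems, the cleanest route is simply to invoke it; otherwise the localization-at-height-one-primes argument above is the honest path.
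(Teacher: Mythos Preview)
The paper does not prove this proposition: it simply cites \cite[Corollary 3.3.4]{LLTT}. Your final sentence anticipates exactly this, and that is indeed the cleanest route.

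Your reconstruction of how the argument in \cite{LLTT} ought to go is broadly on the right track --- the perfect pairings give a duality between limits of the refined system, and the passage from $(\varinjlim \mathfrak{a}_n^\prime)^\vee$ to $\varprojlim \mathfrak{a}_n^\prime$ is controlled by the norm relations in ($\Gamma$-3), with the discrepancy supported on simple primes. A couple of points where your sketch is imprecise: the identification $\mathfrak{a}_n^\prime = \mathfrak{a}_n^1/\mathfrak{a}_n^0$ assumes $\mathfrak{a}_n^0 \subseteq \mathfrak{a}_n^1$, which is not immediate from the axioms (the paper defines $\mathfrak{a}_n^\prime$ as the \emph{image} of $\mathfrak{a}_n^1$ in $\mathfrak{a}_n/\mathfrak{a}_n^0$, i.e.\ $\mathfrak{a}_n^1/(\mathfrak{a}_n^1\cap\mathfrak{a}_n^0)$); and the duality you want is not quite a pairing $\mathfrak{a}_n^\prime\times\mathfrak{b}_n^\prime\to\mathbf{Q}_p/\mathbf{Z}_p$ but rather the statement in Lemma~\ref{prime}(a), namely $\varinjlim_n \mathfrak{b}_n/\mathfrak{b}_n^0 \cong (\mathfrak{a}^1)^\vee$, which then has to be combined with the exact sequence in Lemma~\ref{prime}(b). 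These are the pieces that \cite{LLTT} assembles carefully; your localization-at-height-one argument for step three is the right heuristic, but the honest proof requires more bookkeeping than your sketch suggests.
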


\begin{proof}
See \cite[Corollary 3.3.4]{LLTT}.
\end{proof}

\section{Algebraic functional equation}

Let $X_{\underline{I}}(T^\dagger/K_\infty)$ (resp. $X_{\underline{I}^c}(T^\ast(1)^\dagger/K_\infty)$) be the Pontryagin dual of $\Sel_{\underline{I}}(T^\dagger/K_\infty)$ (resp. $\Sel_{\underline{I}^c}(T^\ast(1)^\dagger/K_\infty)$). To prove the functional equation $X_{\underline{I}}(T^\dagger/K_\infty)^\iota \sim X_{\underline{I}^c}(T^\ast(1)^\dagger/K_\infty)$, it suffices to show that $[X_{\underline{I}}(T^\dagger/K_\infty)]_\text{si}^\iota=[X_{\underline{I}^c}(T^\ast(1)^\dagger/K_\infty)]_\text{si}$ and $[X_{\underline{I}}(T^\dagger/K_\infty)]_\text{ns}^\iota=[X_{\underline{I}^c}(T^\ast(1)^\dagger/K_\infty)]_\text{ns}$. The assertion about non-simple parts will follow by using techniques from \cite{AL21} and \cite{LLTT}. However, for simple parts, we will need a careful analysis of bases of Dieudonné modules. 

\subsection{Non-simple parts}\label{sct:Non-simple}

We go back to the notation $\Gamma=\Gal(K_\infty/K)$. We start by defining the $\Gamma$-system underlying the argument. Let $\mathfrak{a}_n = \text{Sel}_{\underline{I}}^{\text{str}}(T^\dagger/K_{n})_{/\text{div}}$ and $\mathfrak{b}_n = \text{Sel}_{\underline{I}^c}^{\text{str}}((T^\ast (1))^\dagger/K_{n})_{/\text{div}}$. Let $r_m^n$ be the restriction maps on the multi-signed Selmer group induced by the restriction maps on cohomology. Let $c_m^n$ be the corestriction maps on the multi-signed Selmer group induced by the corestriction maps on cohomology. Let $\langle \sim,\sim \rangle_n$ be Flach's pairing. Note that both $X_{\underline{I}}(T^\dagger /K_\infty)$ and $X_{\underline{I}^c}(T^\ast (1)^\dagger /K_\infty)$ are finitely generated over $\Lambda(\Gamma)$ since $H^1_{\mathrm{Iw},\Sigma}(K_\infty,T^\dagger)$ is \cite[Theorem A.4]{LZ14}.

\begin{lemma}\label{G-system}
Suppose that $X_{\underline{I}}(T^\dagger /K_\infty)$ and $X_{\underline{I}^c}(T^\ast (1)^\dagger /K_\infty)$ are torsion over $\Lambda(\Gamma)$. Then
$$
\mathfrak{A} = \{ \mathfrak{a}_n, \mathfrak{b}_n, \langle \sim,\sim \rangle_n ,r_m^n, c_m^n : n,m \in \mathbf{Z}_{\geq 0}, n \geq m \}
$$
is a $\Gamma$-system.
\end{lemma}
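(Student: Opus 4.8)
The finiteness in ($\Gamma$-1) is the only place the torsion hypothesis on $X_{\underline{I}}(T^\dagger/K_\infty)$ and $X_{\underline{I}^c}(T^\ast(1)^\dagger/K_\infty)$ enters essentially, together with the fact (recorded just before the statement) that these duals are finitely generated over $\Lambda(\Gamma)$. First I would observe that $\mathfrak{a}_n$ and $\mathfrak{b}_n$ are finite: by construction $\Sel_{\underline{I}}^\text{str}(T^\dagger/K_n)$ is a cofinitely generated $\Zp$-module (it sits inside $H^1_{\Sigma_n}(K_n, T^\dagger)$, which is cofinitely generated), so quotienting by its maximal divisible subgroup leaves a finite group; the $\Lambda(\Gamma)$-action factors through $\Zp[\Gamma_n]$ because $K_n = K_\infty^{\Gamma_n}$ and the Galois action on $\Sel_{\underline{I}}^\text{str}(T^\dagger/K_n)$ is the one of $\Gal(K_n/K) = \Gamma_n$.

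\textbf{Next I would treat ($\Gamma$-2) and ($\Gamma$-3), which are essentially formal.} The maps $r_m^n$ are the restriction maps $\Sel_{\underline{I}}^\text{str}(T^\dagger/K_m) \to \Sel_{\underline{I}}^\text{str}(T^\dagger/K_n)$ and $c_m^n$ the corestriction maps in the other direction, each descending to the $/\text{div}$ quotients; that these respect the local conditions defining the strict Selmer groups follows because $H^1_{I_\gq}(K_{n,\gq},T^\dagger) = H^1_{I_\gq}(K_{\infty,\gq},T^\dagger)^{\Gamma_{p,n}}$ by definition, so restriction and corestriction carry one local condition into another, and likewise for the unramified conditions $H^1_f$ at $v \nmid p$. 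The inductive/projective system properties and $c_n^n = r_n^n = \text{id}$ are immediate. For ($\Gamma$-3), the identities $r_m^n \circ c_m^n = N_{\Gamma_n/\Gamma_m}$ and $c_m^n \circ r_m^n = p^{d(n-m)}\cdot\text{id}$ (here $d = 2$ since $\Gamma \cong \Zp^2$) are the standard corestriction–restriction relations in Galois cohomology, valid after passing to the $/\text{div}$ quotients since those quotients are functorial.

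\textbf{The only real content is ($\Gamma$-4): that Flach's pairing is perfect, $\Gamma$-equivariant, and compatible with $r_m^n$ and $c_m^n$.} Perfectness is exactly the Proposition on Flach's pairing proved in the previous subsection, whose hypothesis \textbf{(Tors)} we are assuming throughout; that Proposition gives a perfect pairing $\Sel_{\underline{I}}^\text{str}(T^\dagger/K_n)_{/\text{div}} \times \Sel_{\underline{I}^c}^\text{str}((T^\ast(1))^\dagger/K_n)_{/\text{div}} \to \Qp/\Zp$, which is precisely $\langle \sim,\sim\rangle_n$. The $\Gamma$-invariance $\langle \gamma a, \gamma b\rangle_n = \langle a,b\rangle_n$ and the adjunction formulas $\langle a, r_m^n(b)\rangle_n = \langle c_m^n(a), b\rangle_m$ and $\langle r_m^n(a), b\rangle_n = \langle a, c_m^n(b)\rangle_m$ follow from the functoriality of the Cassels–Tate/Flach pairing with respect to restriction and corestriction — this is the content of the relevant functoriality statements in \cite{Fla90} (the pairing is built from cup products and the local invariant maps, both of which intertwine restriction and corestriction with the norm). \textbf{I expect the compatibility of Flach's pairing with the transition maps to be the part requiring the most care}, since one has to track through Flach's construction that restriction on one factor corresponds to corestriction on the other; but this is a known naturality property rather than a new computation. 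Finally, $\mathfrak{a} = \varprojlim_n \mathfrak{a}_n$ and $\mathfrak{b} = \varprojlim_n \mathfrak{b}_n$ are finitely generated torsion over $\Lambda(\Gamma)$: by the perfect pairings $\langle\sim,\sim\rangle_n$ one identifies $\mathfrak{a}$ with the Pontryagin dual of $\varinjlim_n \mathfrak{b}_n = \Sel_{\underline{I}^c}^\text{str}((T^\ast(1))^\dagger/K_\infty)_{/\text{div}}$, hence with a quotient of $X_{\underline{I}^c}(T^\ast(1)^\dagger/K_\infty)$ up to the divisible part, which is finitely generated torsion by hypothesis; symmetrically for $\mathfrak{b}$. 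This verifies all the axioms and concludes the proof.
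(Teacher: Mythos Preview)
Your proposal is correct and follows essentially the same route as the paper's proof: brief verification of $(\Gamma\text{-}1)$--$(\Gamma\text{-}4)$ via cofiniteness of the Selmer groups, standard restriction/corestriction identities, and Flach's pairing, followed by the identification $\mathfrak{a}\cong(\varinjlim_n\mathfrak{b}_n)^\vee$ (the paper writes the dual identification $\mathfrak{b}\cong(\varinjlim_n\mathfrak{a}_n)^\vee$) to deduce the torsion property from the hypothesis on $X_{\underline{I}}$ and $X_{\underline{I}^c}$. Two small slips worth correcting: your opening sentence says the torsion hypothesis enters in $(\Gamma\text{-}1)$, but in fact your own argument for $(\Gamma\text{-}1)$ (and the paper's) uses only cofinite generation --- the torsion hypothesis is used exactly where you invoke it at the end; and in that final step, the dual of a \emph{quotient} of $\Sel_{\underline{I}^c}(T^\ast(1)^\dagger/K_\infty)$ is a \emph{submodule} of $X_{\underline{I}^c}(T^\ast(1)^\dagger/K_\infty)$, not a quotient, though of course either suffices for the torsion conclusion.
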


\begin{proof}
One can check that the condition ($\Gamma$-1)-($\Gamma$-4) are satisfied (for ($\Gamma$-1), notice that $H^1_{\Sigma_n}(K_n,T^\dagger)^\vee$ is a finitely generated $\Zp$-module, thus $\Sel_{\underline{I}}^\text{str}(T^\dagger/K_n)$ is of the form $(\Qp/\Zp)^s \times P$ where $s\geq 0$ and $P$ is a finite group). Write $\Sel_{\underline{I}}^{\text{str}}(T^\dagger/K_{n})_{\text{div}}$ for the divisible part of $\Sel_{\underline{I}}^{\text{str}}(T^\dagger/K_{n})$ and $\Sel_{\underline{I}}^{\text{str}}(T^\dagger/K_{\infty})_{\text{div}}$ for the direct limit $$\varinjlim_n \Sel_{\underline{I}}^{\text{str}}(T^\dagger/K_{n})_{\text{div}}.$$ By definition, the module $\varinjlim_n \mathfrak{a}_n$ fits into the short exact sequence
$$
0\to \Sel_{\underline{I}}^{\text{str}}(T^\dagger/K_{\infty})_{\text{div}} \to \Sel_{\underline{I}}(T^\dagger/K_{\infty})\to  \varinjlim_n \mathfrak{a}_n \to 0,
$$
where we have identified $\Sel_{\underline{I}}^{\text{str}}(T^\dagger/K_{\infty})$ with $\Sel_{\underline{I}}(T^\dagger/K_{\infty})$ via proposition \ref{strict}. Upon taking Pontryagin duals, we get the short exact sequence
\begin{equation} \label{main es}
0 \to (\varinjlim_n \mathfrak{a}_n)^\vee \to X_{\underline{I}}(T^\dagger /K_\infty) \to \Sel_{\underline{I}}^{\text{str}}(T^\dagger/K_{\infty})_{\text{div}}^\vee \to 0.
\end{equation}
But, by property ($\Gamma$-4), $(\varinjlim_n \mathfrak{a}_n)^\vee \cong \varprojlim_n \mathfrak{b}_n = \mathfrak{b}$. Thus, the submodule $\mathfrak{b}$ of the $\Lambda(\Gamma)$-torsion module $X_{\underline{I}}(T^\dagger /K_\infty)$ is also a torsion $\Lambda(\Gamma)$-module. We can show that $\mathfrak{a}$ is torsion in a similar way.
\end{proof}

\begin{remark}
See \cite[Remark 4.3]{DR21} for examples where the torsion hypothesis of lemma \ref{G-system} are satisfied.
\end{remark}

Put $Y_{\underline{I}}(T^\dagger/K_\infty)\colonequals \Sel_{\underline{I}}^{\text{str}}(T^\dagger/K_{\infty})_{\text{div}}^\vee$ and $Y_{\underline{I}^c}(T^\ast (1)^\dagger/K_\infty)\colonequals \Sel_{\underline{I}^c}^{\text{str}}(T^\ast (1)^\dagger/K_{\infty})_{\text{div}}^\vee$.

\begin{lemma}\label{restriction}
For every $n$, the restriction map
$$
\Sel_{\underline{I}}^\text{str}(T^\dagger /K_{n}) \to \Sel_{\underline{I}}(T^\dagger /K_\infty)^{\Gamma_n}
$$
is an injection.
\end{lemma}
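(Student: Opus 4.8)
The plan is to deduce the injectivity from the inflation--restriction sequence together with the vanishing $H^0(K_\infty,T^\dagger)=0$, which in turn follows from the local hypothesis \textbf{(Tors)} already in force.

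First I would record that the map in the statement is nothing but the restriction map $H^1_{\Sigma_n}(K_n,T^\dagger)\to H^1_\Sigma(K_\infty,T^\dagger)^{\Gamma_n}$ on the ambient cohomology groups, cut down to the Selmer subgroups. It is well defined because restriction is compatible with the local conditions: at $\gq\mid p$ this is immediate from the definition $H^1_{I_\gq}(K_{n,\gq},T^\dagger)=H^1_{I_\gq}(K_{\infty,\gq},T^\dagger)^{\Gamma_{p,n}}$ together with the identification $H^1(K_{\infty,\gq},T^\dagger)^{\Gamma_{p,n}}\cong H^1(K_{n,\gq},T^\dagger)$, while at $v\nmid p$ it holds because the restriction of an unramified class is unramified. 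Hence it is enough to prove that the global restriction map $H^1_{\Sigma_n}(K_n,T^\dagger)\to H^1_\Sigma(K_\infty,T^\dagger)^{\Gamma_n}$ is injective.

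Next I would apply the inflation--restriction exact sequence to the normal subgroup $\Gal(K_\Sigma/K_\infty)$ of $\Gal(K_\Sigma/K_n)$, with quotient $\Gamma_n=\Gal(K_\infty/K_n)$: the kernel of this restriction map is $H^1(\Gamma_n,H^0(K_\infty,T^\dagger))$. So everything reduces to checking $H^0(K_\infty,T^\dagger)=0$. This was essentially observed already in the discussion of strict Selmer groups: \textbf{(Tors)} forces $H^0(K_{\infty,\gq},T^\dagger)=0$, and since a place of $K_\infty$ above $\gq$ realizes $G_{K_{\infty,\gq}}$ as a decomposition subgroup of $G_{K_\infty}$, we get $H^0(K_\infty,T^\dagger)\subseteq H^0(K_{\infty,\gq},T^\dagger)=0$. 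Therefore the kernel vanishes and the map is injective.

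I do not anticipate any serious obstacle here; once $H^0(K_\infty,T^\dagger)=0$ is available the argument is purely formal inflation--restriction bookkeeping. The only point deserving a line of care is the very first one --- checking that restriction carries the Selmer condition at $K_n$ into the Selmer condition at $K_\infty$, so that the displayed map is even defined --- but this is built into the definitions of $H^1_{I_\gq}(K_{n,\gq},T^\dagger)$ and $H^1_f(K_{n,v},T^\dagger)$ recalled above.
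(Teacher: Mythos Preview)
Your argument is correct and follows essentially the same route as the paper: both reduce to showing that the ambient restriction map $H^1_{\Sigma_n}(K_n,T^\dagger)\to H^1_\Sigma(K_\infty,T^\dagger)^{\Gamma_n}$ is injective via inflation--restriction, and both obtain the needed vanishing $H^0(K_\infty,T^\dagger)=0$ from \textbf{(Tors)} through the local vanishing $H^0(K_{\infty,\gq},T^\dagger)=0$. The paper frames the first reduction via a commutative diagram and the snake lemma rather than your direct subgroup observation, but this is purely cosmetic.
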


\begin{proof}
For each place $v$ of $K_\infty$, let $J_v$ denote the local condition $$H^1(K_{\infty,v},T^\dagger)/H^1_f(K_{\infty,v},T^\dagger)$$ when $v\in \Sigma^\prime, v \nmid p$ and $H^1(K_{\infty,v},T^\dagger)/H^1_{I_v}(K_{\infty,v},T^\dagger)$ when $v | p$. Let also $J_v^n$ denote $H^1(K_{n,v},T^\dagger)/H^1_f(K_{n,v},T^\dagger)$ when $v\in \Sigma_n,v \nmid p$ and $H^1(K_{n,v},T^\dagger)/H^1_{I_v}(K_{n,p},T^\dagger)$ when $v | p$. Consider the commutative diagram with exact rows
\begin{equation*}
\begin{tikzcd}
0 \arrow{r} & \Sel_{\underline{I}}^\text{str}(T^\dagger /K_{n}) \arrow{r} \arrow[d, "\text{res}"] & H^1_{\Sigma_n}(K_n,T^\dagger) \arrow{r} \arrow[d, "\text{res}"] & \prod_v J_v^n \arrow{r} \arrow[d, "\text{res}"] & 0 \\
0 \arrow{r} & \Sel_{\underline{I}}(T^\dagger /K_\infty)^{\Gamma_n} \arrow{r} & H^1_{\Sigma^\prime}(K_\infty, T^\dagger)^{\Gamma_n} \arrow{r} & \left( \prod_v J_v \right)^{\Gamma_n} \arrow{r} & 0
\end{tikzcd}
\end{equation*}
By the snake lemma, the kernel of the restriction map $\Sel_{\underline{I}}^\text{str}(T^\dagger /K_{n}) \to \Sel_{\underline{I}}(T^\dagger /K_\infty)^{\Gamma_n}$ is contained in the kernel of the restriction map  $H^1_{\Sigma_n}(K_n,T^\dagger) \to H^1_{\Sigma^\prime}(K_\infty, T^\dagger)^{\Gamma_n}$. By inflation-restriction, this kernel is isomorphic to $H^1(\Gamma_n, H^0(K_\infty, T^\dagger))$. Since the Galois group of $K_{\infty,\gq}$ over $K_\gq$ is a pro-$p$ group, the orbit-stabilizer theorem together with \textbf{(Tors)} imply that $H^0(K_{\infty,\gq},T^\dagger)$ is trivial. We get $H^0(K_\infty, T^\dagger)=0$ and thus the restriction map of interest is an injection.
\end{proof}

\begin{lemma}\label{Y}
Suppose that $X_{\underline{I}}(T^\dagger /K_\infty)$ and $X_{\underline{I}^c}(T^\ast (1)^\dagger /K_\infty)$ are torsion over $\Lambda(\Gamma)$. Then we have $[Y_{\underline{I}}(T^\dagger/K_\infty)] = [Y_{\underline{I}}(T^\dagger/K_\infty)]_\text{si}$ and $[Y_{\underline{I}^c}(T^\ast (1)^\dagger/K_\infty)] = [Y_{\underline{I}^c}(T^\ast (1)^\dagger/K_\infty)]_\text{si}$.
\end{lemma}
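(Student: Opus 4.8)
The plan is to show that the "non-divisible" quotients $Y_{\underline{I}}(T^\dagger/K_\infty)$ and $Y_{\underline{I}^c}(T^\ast(1)^\dagger/K_\infty)$ have no non-simple part in their characteristic ideals, i.e.\ that $[Y_{\underline{I}}(T^\dagger/K_\infty)]_\text{ns}=0$ and likewise for $T^\ast(1)$. The mechanism I would invoke is the same as in \cite{AL21}: the module $\Sel_{\underline{I}}^{\text{str}}(T^\dagger/K_\infty)_{\text{div}}$ is, by construction, the direct limit of the \emph{divisible} parts of the finite-level strict Selmer groups, and by Lemma \ref{restriction} each $\Sel_{\underline{I}}^\text{str}(T^\dagger/K_n)$ injects into $\Sel_{\underline{I}}(T^\dagger/K_\infty)^{\Gamma_n}$. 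The key point is that a pseudo-null obstruction cannot arise from the divisible part: if a prime $\xi_i$ appearing in $[Y_{\underline{I}}(T^\dagger/K_\infty)]$ were non-simple, then $\Lambda(\Gamma)/\xi_i^{r_i}$ would have trivial $\Gamma_n$-coinvariants becoming large only along a thin set of $n$, which is incompatible with the divisibility and corestriction-surjectivity built into the tower of $\Sel_{\underline{I}}^{\text{str}}(T^\dagger/K_n)_{\text{div}}$.

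Concretely, the steps I would carry out are: (1) recall from the proof of Lemma \ref{G-system} the short exact sequence \eqref{main es}, so that $Y_{\underline{I}}(T^\dagger/K_\infty)=\Sel_{\underline{I}}^{\text{str}}(T^\dagger/K_\infty)_{\text{div}}^\vee$ is a torsion $\Lambda(\Gamma)$-module sitting as a quotient of $X_{\underline{I}}(T^\dagger/K_\infty)$; (2) observe that $\Sel_{\underline{I}}^{\text{str}}(T^\dagger/K_\infty)_{\text{div}}=\varinjlim_n \Sel_{\underline{I}}^{\text{str}}(T^\dagger/K_n)_{\text{div}}$ is a union of divisible groups, hence itself divisible, so its Pontryagin dual $Y_{\underline{I}}(T^\dagger/K_\infty)$ is $\Zp$-torsion-free; (3) invoke the structural dichotomy for $\Gamma$-systems — this is precisely the content of \cite[Proposition 3.3.3]{LLTT} or the analysis preceding \cite[Corollary 3.3.4]{LLTT}, which identifies $[\mathfrak{a}]_\text{si}$ with the contribution coming from the kernel $\mathfrak{a}^0_n$ of the maps into the direct limit, equivalently from $Y$; (4) conclude that $[Y_{\underline{I}}(T^\dagger/K_\infty)]=[Y_{\underline{I}}(T^\dagger/K_\infty)]_\text{si}$, and run the identical argument with $T$ replaced by $T^\ast(1)$ and $\underline{I}$ by $\underline{I}^c$, using that $T^\ast(1)$ also satisfies \textbf{(Tors)} and that Lemma \ref{restriction} applies verbatim to it.

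I expect the main obstacle to be step (3): pinning down exactly why a module that is $\Zp$-torsion-free and arises as the dual of the divisible part can only have simple elements in its characteristic ideal. The honest way to see this is to trace through the $\Gamma$-system formalism of \cite{LLTT}: the decomposition $[\mathfrak{a}]=[\mathfrak{a}^0]\oplus[\mathfrak{a}']$ (implicit in Lemma \ref{prime}(b)) together with the fact, proved in \cite{LLTT}, that $[\mathfrak{a}^0]$ — which is where $Y$ lives after dualizing — is supported only at simple primes because divisibility forces the relevant $\Gamma_n$-Euler characteristics to be controlled by roots of unity. I would cite \cite[Lemma 3.3.2 and Proposition 3.3.3]{LLTT} for this rather than reprove it, and spell out only the translation between their notation ($\mathfrak{a}^0$, the kernel of $r_n$) and ours ($Y_{\underline{I}}(T^\dagger/K_\infty)=(\varinjlim_n\Sel^{\text{str}}_{\underline{I}}(T^\dagger/K_n)_{\text{div}})^\vee$), which is exactly the content of the exact sequence \eqref{main es} and Lemma \ref{restriction}.
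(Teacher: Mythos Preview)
Your proposal has a genuine gap in step (3), and it rests on a misidentification of the modules involved.

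First, the translation you propose between $Y_{\underline{I}}(T^\dagger/K_\infty)$ and $\mathfrak{a}^0$ is not correct. By definition, $Y_{\underline{I}}(T^\dagger/K_\infty)=\bigl(\varinjlim_n \Sel_{\underline{I}}^{\text{str}}(T^\dagger/K_n)_{\text{div}}\bigr)^\vee$ is the dual of the direct limit of the \emph{divisible} parts, whereas $\mathfrak{a}^0=\varprojlim_n\mathfrak{a}_n^0$ is built from the \emph{non-divisible} quotients $\mathfrak{a}_n=\Sel_{\underline{I}}^{\text{str}}(T^\dagger/K_n)_{/\text{div}}$. These are different objects; neither the exact sequence \eqref{main es} nor Lemma~\ref{prime}(b) identifies them. (In the later proof of Theorem~\ref{non-simple parts} one shows $\mathfrak{a}_n^0$ embeds into a quotient built from divisible parts, but this goes the other way and in any case uses the very annihilation fact you are trying to establish.)

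Second, the results you cite from \cite[\S3.3]{LLTT} do not say what you need. Proposition~3.3.3 there (our Proposition~\ref{non simple}) asserts $[\mathfrak{a}']^\iota_{\text{ns}}=[\mathfrak{b}']_{\text{ns}}$; it does not assert that $[\mathfrak{a}^0]$ (or $[Y]$) is purely simple. Your observation that $Y$ is $\Zp$-torsion-free is true but irrelevant: $\Lambda(\Gamma)/(f)$ is $\Zp$-torsion-free for any irreducible $f$ coprime to $p$, simple or not.

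The paper's proof is shorter and uses a different input: \cite[Theorem~4.1.3]{LLTT}, a general structural result asserting that for a cofinitely generated cotorsion $\Lambda(\Gamma)$-module $M$ there exist relatively prime simple elements $f_1,\ldots,f_m$ such that $f_1\cdots f_m\,(M^{\Gamma_n})_{\text{div}}=0$ for all $n$. Applying this to $M=\Sel_{\underline{I}}(T^\dagger/K_\infty)$, the injection of Lemma~\ref{restriction} transports the annihilation to $\Sel_{\underline{I}}^{\text{str}}(T^\dagger/K_n)_{\text{div}}$ for every $n$, and passing to the limit gives $f_1\cdots f_m\,Y_{\underline{I}}(T^\dagger/K_\infty)=0$. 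This is the missing ingredient in your outline; once you replace your step~(3) by a direct appeal to \cite[Theorem~4.1.3]{LLTT}, the argument goes through.
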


\begin{proof}
In \cite[Theorem 4.1.3]{LLTT}, it is shown that there exists relatively prime simple elements $f_1,\ldots, f_m \in \Lambda(\Gamma)$ such that
$$
f_1 \cdots f_m \left( \Sel_{\underline{I}}(T^\dagger/K_{\infty})^{\Gamma_n} \right)_\text{div} = 0
$$
for every $n$. By lemma \ref{restriction}, we can see $\Sel_{\underline{I}}^\text{str}(T^\dagger /K_{n})$ as a $\Lambda(\Gamma)$-submodule of the module $\Sel_{\underline{I}}(T^\dagger /K_\infty)^{\Gamma_n}$. Thus, the product $f_1 \cdots f_m$ also annihilates the module $\Sel_{\underline{I}}^\text{str}(T^\dagger /K_{n})_\text{div}$ for every $n$. By the definition of the action of $\Lambda(\Gamma)$ on inverse limit, we see that 
$$
f_1\cdots f_m Y_{\underline{I}}(T^\dagger/K_\infty) =0.
$$
\end{proof}

\begin{theorem}\label{non-simple parts}
Suppose that \textbf{(H.crys)}, \textbf{(H.HT)}, \textbf{(H.Frob)} \textbf{(H.P)}, \textbf{(H.Ram)}, \textbf{(H.F)} and \textbf{(Tors)} hold. Suppose that $X_{\underline{I}}(T^\dagger /K_\infty)$ and $X_{\underline{I}^c}(T^\ast (1)^\dagger /K_\infty)$ are torsion over $\Lambda(\Gamma)$. We have 
$
[X_{\underline{I}^c}(T^\ast (1)^\dagger /K_\infty)]_\text{ns}^\iota = [X_{\underline{I}}(T^\dagger /K_\infty)]_\text{ns}
$.
\end{theorem}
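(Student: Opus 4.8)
The plan is to deduce the statement on non-simple parts directly from the $\Gamma$-system formalism of Section~\ref{gamma-systems} applied to the concrete $\Gamma$-system $\mathfrak{A}$ built in Lemma~\ref{G-system}. By Lemma~\ref{G-system}, the data $\mathfrak{A}=\{\mathfrak{a}_n,\mathfrak{b}_n,\langle\sim,\sim\rangle_n,r_m^n,c_m^n\}$ with $\mathfrak{a}_n=\Sel_{\underline{I}}^{\mathrm{str}}(T^\dagger/K_n)_{/\mathrm{div}}$ and $\mathfrak{b}_n=\Sel_{\underline{I}^c}^{\mathrm{str}}((T^\ast(1))^\dagger/K_n)_{/\mathrm{div}}$ is a genuine $\Gamma$-system, so in particular $\mathfrak{a}=\varprojlim_n\mathfrak{a}_n$ and $\mathfrak{b}=\varprojlim_n\mathfrak{b}_n$ are finitely generated torsion $\Lambda(\Gamma)$-modules. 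Applying the auxiliary construction of Section~\ref{gamma-systems} to $\mathfrak{A}$ produces the modules $\mathfrak{a}'$ and $\mathfrak{b}'$, and Proposition~\ref{non simple} gives immediately
$$
[\mathfrak{a}']_\text{ns}^\iota = [\mathfrak{b}']_\text{ns}.
$$
The task is therefore reduced to identifying $[\mathfrak{a}']_\text{ns}$ with $[X_{\underline{I}}(T^\dagger/K_\infty)]_\text{ns}$ and $[\mathfrak{b}']_\text{ns}$ with $[X_{\underline{I}^c}(T^\ast(1)^\dagger/K_\infty)]_\text{ns}$.

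First I would unwind the definitions so that $\mathfrak{a}'$ is expressed in terms of the Selmer group over $K_\infty$. By Lemma~\ref{prime}(b) there is a short exact sequence $0\to\mathfrak{a}_n^0\to\mathfrak{a}\to\mathfrak{a}'\to 0$; on the other hand, dualizing the defining exact sequence $0\to\Sel_{\underline{I}}^{\mathrm{str}}(T^\dagger/K_\infty)_{\mathrm{div}}\to\Sel_{\underline{I}}(T^\dagger/K_\infty)\to\varinjlim_n\mathfrak{a}_n\to 0$ and using the pairing identification $(\varinjlim_n\mathfrak{a}_n)^\vee\cong\mathfrak{b}$ (property ($\Gamma$-4)) gives the exact sequence \eqref{main es}
$$
0\to\mathfrak{b}\to X_{\underline{I}}(T^\dagger/K_\infty)\to Y_{\underline{I}}(T^\dagger/K_\infty)\to 0.
$$
Here is where I would invoke Lemma~\ref{Y}: it shows $[Y_{\underline{I}}(T^\dagger/K_\infty)]=[Y_{\underline{I}}(T^\dagger/K_\infty)]_\text{si}$, i.e. $Y_{\underline{I}}(T^\dagger/K_\infty)$ contributes only simple elements to the characteristic ideal. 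Since characteristic ideals are multiplicative in short exact sequences of torsion modules, this yields $[X_{\underline{I}}(T^\dagger/K_\infty)]_\text{ns}=[\mathfrak{b}]_\text{ns}$ (the non-simple factors of $\mathfrak{b}$ viewed inside $X_{\underline{I}}$), and by the same token, working with the other system, $[X_{\underline{I}^c}(T^\ast(1)^\dagger/K_\infty)]_\text{ns}=[\mathfrak{a}]_\text{ns}$. A parallel argument, using that $\mathfrak{a}_n^0$ (respectively $\mathfrak{b}_n^0$) has finite image after passing to the limit and that by construction $\mathfrak{a}^0=\varprojlim_n\mathfrak{a}_n^0$ feeds into a sequence whose outer terms carry only simple-element data, shows $[\mathfrak{a}']_\text{ns}=[\mathfrak{a}]_\text{ns}$ and $[\mathfrak{b}']_\text{ns}=[\mathfrak{b}]_\text{ns}$; this is the bookkeeping step where one has to be careful that none of the transition-map kernels or the divisible parts introduce spurious non-simple factors.

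Combining these identifications with Proposition~\ref{non simple} gives
$$
[X_{\underline{I}^c}(T^\ast(1)^\dagger/K_\infty)]_\text{ns}^\iota=[\mathfrak{a}]_\text{ns}^\iota=[\mathfrak{a}']_\text{ns}^\iota=[\mathfrak{b}']_\text{ns}=[\mathfrak{b}]_\text{ns}=[X_{\underline{I}}(T^\dagger/K_\infty)]_\text{ns},
$$
which is exactly the claim. I expect the main obstacle to be the middle step, namely verifying cleanly that the characteristic ideal "forgets" all the auxiliary pieces ($Y$, the $\mathfrak{a}_n^0$, the divisible parts) up to simple elements; this requires assembling Lemma~\ref{Y}, Lemma~\ref{prime}, the exact sequence \eqref{main es}, and the torsion hypothesis, and checking that each of $\mathfrak{a}\rightsquigarrow\mathfrak{a}'$ and $X_{\underline{I}}\rightsquigarrow\mathfrak{b}$ changes the characteristic ideal only by simple factors so that the non-simple parts match on the nose. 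The rest—assembling the equalities—is formal once Proposition~\ref{non simple} is in hand, and the hypotheses \textbf{(H.crys)}, \textbf{(H.HT)}, \textbf{(H.Frob)}, \textbf{(H.P)}, \textbf{(H.Ram)}, \textbf{(H.F)}, \textbf{(Tors)} enter only through the earlier lemmas (orthogonality of local conditions, Flach's perfect pairing, Proposition~\ref{strict}) that were already used to set up $\mathfrak{A}$.
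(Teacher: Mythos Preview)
Your outline matches the paper's approach exactly: set up the $\Gamma$-system of Lemma~\ref{G-system}, invoke Proposition~\ref{non simple} to obtain $[\mathfrak{a}']_{\mathrm{ns}}^\iota=[\mathfrak{b}']_{\mathrm{ns}}$, and then identify $[\mathfrak{a}]_{\mathrm{ns}}$, $[\mathfrak{b}]_{\mathrm{ns}}$ with the non-simple parts of the Selmer duals via \eqref{main es} and Lemma~\ref{Y}. The chain of equalities you wrote at the end is precisely what the paper assembles.

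There is, however, a genuine gap at the step you yourself flag as the main obstacle: the passage from $[\mathfrak{a}]_{\mathrm{ns}}$ to $[\mathfrak{a}']_{\mathrm{ns}}$, i.e.\ showing that $[\mathfrak{a}^0]=[\mathfrak{a}^0]_{\mathrm{si}}$. Your justification (``$\mathfrak{a}_n^0$ has finite image after passing to the limit'') is not correct and is not how the paper proceeds. The modules $\mathfrak{a}_n^0$ are not claimed to be finite, nor to vanish in the limit. What the paper does is a concrete diagram chase: one compares the exact sequences
\[
0\to \Sel_{\underline{I}}^{\mathrm{str}}(T^\dagger/K_n)_{\mathrm{div}}\to \Sel_{\underline{I}}^{\mathrm{str}}(T^\dagger/K_n)\to \mathfrak{a}_n\to 0
\]
and its $\Gamma_n$-invariant limit version, applies the snake lemma, and uses the injectivity of the middle vertical map (Lemma~\ref{restriction}) to embed $\mathfrak{a}_n^0$ into $\Sel_{\underline{I}}^{\mathrm{str}}(T^\dagger/K_\infty)_{\mathrm{div}}^{\Gamma_n}/\Sel_{\underline{I}}^{\mathrm{str}}(T^\dagger/K_n)_{\mathrm{div}}$. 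One then invokes the fact (from \cite[Theorem~4.1.3]{LLTT}, already used in the proof of Lemma~\ref{Y}) that there is a fixed product of simple elements annihilating $(\Sel_{\underline{I}}(T^\dagger/K_\infty)^{\Gamma_n})_{\mathrm{div}}$ for every $n$; this same product then annihilates each $\mathfrak{a}_n^0$, hence $\mathfrak{a}^0$, giving $[\mathfrak{a}^0]=[\mathfrak{a}^0]_{\mathrm{si}}$. You should replace your hand-wave with this argument; without it the identification $[\mathfrak{a}]_{\mathrm{ns}}=[\mathfrak{a}']_{\mathrm{ns}}$ is unsupported.
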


\begin{proof}
We follow the proof of \cite[Theorem 3.3]{AL21}. Consider the commutative diagram
\begin{equation*}
\begin{tikzcd}
0 \arrow{r} & \Sel_{\underline{I}}^\text{str}(T^\dagger / K_n)_\text{div} \arrow{r} \arrow{d} & \Sel_{\underline{I}}^\text{str}(T^\dagger / K_n) \arrow{d} \arrow{r} & \mathfrak{a}_n \arrow{r} \arrow{d} & 0 \\
0 \arrow{r} & \Sel_{\underline{I}}^\text{str}(T^\dagger / K_\infty)_\text{div}^{\Gamma_n} \arrow{r} & \Sel_{\underline{I}}(T^\dagger / K_\infty)^{\Gamma_n} \arrow{r} & \left( \varinjlim_n \mathfrak{a}_n \right)^{\Gamma_n} 
\end{tikzcd}
\end{equation*}
with exact rows. By definition, the kernel of the rightmost map is $\mathfrak{a}_n^0$. The snake lemma shows that $\mathfrak{a}_n^0$ can be seen as a submodule of $\Sel_{\underline{I}}^\text{str}(T^\dagger / K_\infty)_\text{div}^{\Gamma_n}/\Sel_{\underline{I}}^\text{str}(T^\dagger / K_n)_\text{div}$ because the middle map is an injection by lemma \ref{restriction}. Since $\Sel_{\underline{I}}(T^\dagger / K_\infty)^{\Gamma_n}$ is annihilated by a product of simple elements, $\mathfrak{a}_n^0$ is also annihilated by that product for every $n$. Thus, $[\mathfrak{a}^0] = [\mathfrak{a}^0]_\text{si}$. By lemma \ref{prime}, $[\mathfrak{a}]_\text{ns} = [\mathfrak{a}^\prime]_\text{ns}$. In a similar fashion, one also find that $[\mathfrak{b}]_\text{ns} = [\mathfrak{b}^\prime]_\text{ns}$. Combining the exact sequence \eqref{main es} with lemma \ref{Y}, one gets $[\mathfrak{b}]_\text{ns} = [X_{\underline{I}}(T^\dagger /K_\infty)]_\text{ns}$ and $[\mathfrak{a}]_\text{ns} = [X_{\underline{I}^c}((T^\ast (1))^\dagger /K_\infty)]_\text{ns}$. By proposition \ref{non simple},
$$
[X_{\underline{I}^c}(T^\ast (1)^\dagger /K_\infty)]_\text{ns}^\iota = [X_{\underline{I}}(T^\dagger /K_\infty)]_\text{ns}.
$$
\end{proof}

\subsection{Abelian varieties}\label{sct:Abelian}
To get an analogue of theorem \ref{non-simple parts} for simple parts, we need to specialize to the case where $T$ is the $p$-adic Tate module of an abelian variety. The case of potentially ordinary reduction at primes above $p$ was carried out in \cite[Theorem 4.4.4]{LLTT}. Here, we treat the case of supersingular reduction. \\
Let $A$ be an abelian variety defined over $K$ of dimension $g$. Suppose that $A$ has good supersingular reduction at both prime above $p$. We make the assumption: \bigbreak
\noindent \textbf{(P)} $A$ admits a polarization. \bigbreak
We will denote by $\alpha: A \to A^t$ this polarization where $A^t$ is the dual abelian variety. Let $T=T_p(A) \colonequals \varprojlim_n A[p^n]$ be the $p$-adic Tate module of $A$ which is a free $\Zp$-module of rank $d=2g$. Then, $T^\ast (1) = T_p(A^t)$, $T^\dagger = A^t[p^\infty]$ and $T^\ast (1)^\dagger = A[p^\infty]$. Furthermore, $T \otimes \Qp$ satisfies \textbf{(H.HT)}, \textbf{(H.Frob)} and \textbf{(H.P)}. The hypothesis \textbf{(Tors)} is satisfied by supersingularity. The fact that $T$ satisfies \textbf{(H.F)} for any subextension of $F^\mathrm{cyc}$ is the main result of \cite{Ima75}. The isogeny $\alpha$ extends to a map between Tate modules $T_p(A)\to T_p(A^t)$ which we also denote $\alpha$. Since this isogeny is defined over $K$, it is $G_K$-equivariant and is in fact a morphism of $\mathbf{Z}_p[G_{K_\mathfrak{q}}]$-modules. We further extend the map $\alpha$ to $\mathcal{M}_p = T \otimes_{\mathbf{Z}_p} \mathbf{Q}_p$ in the natural way. We get by functoriality a morphism of filtered $\varphi$-modules $\widetilde{\alpha}: \mathbb{D}_\text{cris}(T) \to \mathbb{D}_\text{cris}(T^\ast (1))$. As in section \ref{Coleman}, we choose a $\mathbf{Z}_p$-basis $\{v_{\gq,1},\ldots,v_{\gq,2g}\}$ of $\mathbb{D}_{\cris,\gq}(T)$ such that $\{v_{\gq,1},\ldots,v_{\gq,g}\}$ is a $\mathbf{Z}_p$-basis of $\Fil^0\mathbb{D}_{\cris,\gq}(T)$. Let $\{v_{\gq,1}^\prime,\ldots,v_{\gq,2g}^\prime\}$ be the dual basis of $\mathbb{D}_{\cris,\gq}(T^\ast (1))$ with respect to the pairing $[,]$. Let 
$$
(\cdot,\cdot):\mathbb{D}_{\cris,\gq}(T)\times \mathbb{D}_{\cris,\gq}(T) \to \mathbf{Z}_p
$$
be the pairing defined by $(x,y) \colonequals [x,\widetilde{\alpha}(y)]$. The pairing $(\cdot,\cdot)$ is a nondegenerate skew-symmetric bilinear form since $\alpha$ is a polarization. Thus there exists a symplectic basis $\{X_{\gq,1},\ldots,X_{\gq,g},Y_{\gq,1},\ldots,Y_{\gq,g}\}$ of $\mathbb{D}_{\cris,\gq}(T)$. If we write $\delta_{ij}$ for the Kronecker delta, then this basis satisfies $(X_{\gq,i},Y_{\gq,j}) = \delta_{ij}$ and $(X_{\gq,i},X_{\gq,j})=0=(Y_{\gq,i},Y_{\gq,j})$.

\begin{lemma}\label{SHT basis}
The module $\mathbb{D}_{\cris,\gq}(T)$ admits a symplectic basis that is Hodge-compatible.
\end{lemma}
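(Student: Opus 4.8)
The plan is to show that $\Fil^{0}\mathbb{D}_{\cris,\gq}(T)$ is a Lagrangian ($=$ maximal isotropic) $\Zp$-direct summand of $\mathbb{D}_{\cris,\gq}(T)$ for the skew-symmetric form $(\cdot,\cdot)$, and then to transport an arbitrary symplectic basis so that its ``Hodge half'' spans $\Fil^{0}\mathbb{D}_{\cris,\gq}(T)$.

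\textbf{Step 1: $\Fil^{0}\mathbb{D}_{\cris,\gq}(T)$ is totally isotropic.} This is the step that uses the polarization, and I expect it to be the conceptual heart of the argument. Since $\alpha\colon A\to A^{t}$ is defined over $K$, the induced map $\widetilde\alpha\colon\mathbb{D}_{\cris,\gq}(T)\to\mathbb{D}_{\cris,\gq}(T^\ast(1))$ is a morphism of filtered $\varphi$-modules, hence $\widetilde\alpha\bigl(\Fil^{0}\mathbb{D}_{\cris,\gq}(T)\bigr)\subseteq\Fil^{0}\mathbb{D}_{\cris,\gq}(T^\ast(1))$. On the other hand, for the crystalline pairing $[\cdot,\cdot]$ the submodule $\Fil^{0}\mathbb{D}_{\cris,\gq}(T^\ast(1))$ is the orthogonal complement of $\Fil^{0}\mathbb{D}_{\cris,\gq}(T)$: this is the case $i=0$ of the orthogonality between $\Fil^{i}\mathbb{D}_{\cris}(T^\ast(1))$ and $\Fil^{-i}\mathbb{D}_{\cris}(T)$ recalled above, and equivalently follows from strict compatibility of $[\cdot,\cdot]$ with the filtrations together with $\mathbb{D}_{\cris}(\Zp(1))=\Zp\cdot t^{-1}$ having $\Fil^{0}=0$. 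Consequently, for $x,y\in\Fil^{0}\mathbb{D}_{\cris,\gq}(T)$ one gets $(x,y)=[x,\widetilde\alpha(y)]=0$.

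\textbf{Step 2: it is a Lagrangian.} Since $\Fil^{0}\mathbb{D}_{\cris,\gq}(T)$ is a $\Zp$-direct summand of the free module $\mathbb{D}_{\cris,\gq}(T)$ of rank $g$ inside total rank $2g$ (recall $d_{+}=d_{-}=g$), is totally isotropic by Step~1, and $(\cdot,\cdot)$ is nondegenerate, it is a maximal isotropic $\Zp$-direct summand.

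\textbf{Step 3: transport a symplectic basis.} Let $\{X_{\gq,1},\dots,X_{\gq,g},Y_{\gq,1},\dots,Y_{\gq,g}\}$ be the symplectic basis of $\mathbb{D}_{\cris,\gq}(T)$ produced in the discussion preceding the lemma, and set $L_{0}=\Span_{\Zp}\{X_{\gq,1},\dots,X_{\gq,g}\}$, a Lagrangian direct summand. Because $(\cdot,\cdot)$ is a perfect alternating pairing of $\Zp$-modules — which is precisely what makes a strict symplectic basis available — the group $\mathrm{Sp}\bigl(\mathbb{D}_{\cris,\gq}(T),(\cdot,\cdot)\bigr)(\Zp)$ acts transitively on Lagrangian $\Zp$-direct summands, so I can pick $u$ in this group with $u(L_{0})=\Fil^{0}\mathbb{D}_{\cris,\gq}(T)$; then $\{uX_{\gq,1},\dots,uX_{\gq,g},uY_{\gq,1},\dots,uY_{\gq,g}\}$ is again symplectic and its first $g$ vectors form a $\Zp$-basis of $\Fil^{0}\mathbb{D}_{\cris,\gq}(T)$, i.e. it is Hodge-compatible. (Equivalently, one may argue directly: fix any $\Zp$-basis $X_{\gq,i}$ of the Lagrangian $\Fil^{0}\mathbb{D}_{\cris,\gq}(T)$, choose a complementary free direct summand $W$, note that the induced pairing $\Fil^{0}\mathbb{D}_{\cris,\gq}(T)\times W\to\Zp$ is perfect so there is a dual basis $Y'_{\gq,i}$ of $W$, and set $Y_{\gq,i}=Y'_{\gq,i}+\sum_{j}c_{ij}X_{\gq,j}$ with the matrix $C=(c_{ij})$ chosen strictly upper triangular so that $(Y_{\gq,i},Y_{\gq,j})=0$, which is possible since $\bigl((Y'_{\gq,i},Y'_{\gq,j})\bigr)$ is alternating.)

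The only point that needs care beyond Step~1 is integrality: all of Step~3 has to be carried out over $\Zp$ rather than merely over $\Qp$, and this is legitimate exactly because $(\cdot,\cdot)$ is a perfect pairing (equivalently $\widetilde\alpha$ is an isomorphism), which is already built into the existence of the strict symplectic basis $\{X_{\gq,i},Y_{\gq,i}\}$; granting this, the transitivity statement used above is classical.
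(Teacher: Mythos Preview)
Your proof is correct and follows essentially the same approach as the paper: show $\Fil^{0}\mathbb{D}_{\cris,\gq}(T)$ is isotropic via the polarization (your Step~1 is identical to the paper's argument), deduce it is Lagrangian from the rank condition, and then extend a basis of $\Fil^{0}$ to a symplectic basis. The paper invokes \cite[Corollary~3.13 and Theorem~3.14]{HKK14} for the last two steps, whereas you argue directly via transitivity of $\mathrm{Sp}$ on Lagrangians (or the explicit Gram--Schmidt correction), but the underlying content is the same.
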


\begin{proof}
Start with a $\mathbf{Z}_p$-basis $\{X_{\gq,1},\ldots,X_{\gq,g}\}$ of $\Fil^0\mathbb{D}_{\cris,\gq}(T)$. Since $\widetilde{\alpha}$ respects filtrations, 
$$
\widetilde{\alpha}(\Fil^0\mathbb{D}_{\cris,\gq}(T)) \subseteq \Fil^0\mathbb{D}_{\cris,\gq}(T^\ast (1)).
$$
But those submodules are orthogonal complement of each other with respect to the pairing $[\cdot,\cdot]$. Thus, $(X_{\gq,i},X_{\gq,j})=0$, i.e. $\Fil^0 \mathbb{D}_{\cris,\gq}(T)$ is isotropic. Moreover, the fact that $\mathcal{M}_p$ satisfies \textbf{(H.P)} gives that $\text{rk}\left( \Fil^0 \mathbb{D}_{\cris,\gq}(T) \right) = \text{rk}\left( \mathbb{D}_{\cris,\gq}(T) \right)/2$. We can now apply \cite[Corollary 3.13]{HKK14} to deduce that $\Fil^0 \mathbb{D}_{\cris,\gq}(T)$ is a Lagrangian submodule and follow with the proof of \cite[Theorem 3.14]{HKK14} to show that $\{X_{\gq,1},\ldots,X_{\gq,g}\}$ can be extended to a symplectic basis $\{X_{\gq,1},\ldots,X_{\gq,g},Y_{\gq,1},\ldots,Y_{\gq,g}\}$ of $\mathbb{D}_{\cris,\gq}(T)$.
\end{proof}

\begin{remark}
Note that in \cite{HKK14}, the authors consider modules over the ring of Colombeau-generalized numbers. However, the proof of the results of section 3 that are of interest to us works as well for $\mathbf{Z}_p$-modules. 
\end{remark}

\begin{lemma}
Let $\mathcal{B}_\gq=\{X_{\gq,1},\ldots,X_{\gq,g},Y_{\gq,1},\ldots,Y_{\gq,g}\}$ be a Hodge-compatible symplectic basis of $\mathbb{D}_{\cris,\gq}(T)$ whose existence is guaranteed by lemma \ref{SHT basis}. Let $\mathcal{B}^\ast_\gq$ be the dual basis of $\mathbb{D}_{\cris,\gq}(T^\ast (1))$ with respect to $[\cdot,\cdot]$. Then,  
$$
\left[
\begin{array}{c|c}
0 & I_{g} \\
\hline
-I_{g} & 0
\end{array}
\right]
\widetilde{\alpha}(\mathcal{B}_\gq) = \mathcal{B}^\ast_\gq.
$$
\end{lemma}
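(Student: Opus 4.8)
The plan is to compute the matrix of $\widetilde{\alpha}$ with respect to the chosen bases and read off the identity from the definition of the symplectic basis together with the defining property of the dual basis. Write $\mathcal{B}_\gq = \{X_{\gq,1},\ldots,X_{\gq,g},Y_{\gq,1},\ldots,Y_{\gq,g}\}$ as a row vector of length $2g$, and similarly $\widetilde{\alpha}(\mathcal{B}_\gq)$ for the row vector $(\widetilde{\alpha}(X_{\gq,1}),\ldots,\widetilde{\alpha}(Y_{\gq,g}))$ of elements of $\mathbb{D}_{\cris,\gq}(T^\ast(1))$. The dual basis $\mathcal{B}_\gq^\ast = \{v_1^\ast,\ldots,v_{2g}^\ast\}$ is characterized by $[b_i, v_j^\ast] = \delta_{ij}$, where $b_i$ is the $i$th element of $\mathcal{B}_\gq$. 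So it suffices to show that the vector $w := \left[\begin{smallmatrix} 0 & I_g \\ -I_g & 0\end{smallmatrix}\right]\widetilde{\alpha}(\mathcal{B}_\gq)$ satisfies $[b_i, w_j] = \delta_{ij}$ for all $i,j$; by the uniqueness of the dual basis this forces $w = \mathcal{B}_\gq^\ast$.

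First I would unwind what $w$ is: if we index $\mathcal{B}_\gq$ so that $b_i = X_{\gq,i}$ for $1\le i\le g$ and $b_{g+i} = Y_{\gq,i}$ for $1\le i\le g$, then the block matrix $\left[\begin{smallmatrix} 0 & I_g \\ -I_g & 0\end{smallmatrix}\right]$ sends $\widetilde{\alpha}(\mathcal{B}_\gq)$ to the row vector whose first $g$ entries are $-\widetilde{\alpha}(Y_{\gq,1}),\ldots,-\widetilde{\alpha}(Y_{\gq,g})$ and whose last $g$ entries are $\widetilde{\alpha}(X_{\gq,1}),\ldots,\widetilde{\alpha}(X_{\gq,g})$. (One must be careful about the convention: $w$ is obtained by right-multiplying the row vector by the transpose, or equivalently by left-multiplying the column of entries; I would fix the convention so that $w_j = -\widetilde{\alpha}(Y_{\gq,j})$ for $j\le g$ and $w_{g+j} = \widetilde{\alpha}(X_{\gq,j})$ for $j\le g$, matching the statement.) Then for $1\le i,j\le g$ I compute, using $(x,y) = [x,\widetilde{\alpha}(y)]$ and the symplectic relations $(X_{\gq,i},Y_{\gq,j}) = \delta_{ij}$, $(X_{\gq,i},X_{\gq,j}) = (Y_{\gq,i},Y_{\gq,j}) = 0$:
\begin{align*}
[X_{\gq,i}, w_j] &= [X_{\gq,i}, -\widetilde{\alpha}(Y_{\gq,j})] = -(X_{\gq,i},Y_{\gq,j}) = -\delta_{ij}, \\
[X_{\gq,i}, w_{g+j}] &= [X_{\gq,i}, \widetilde{\alpha}(X_{\gq,j})] = (X_{\gq,i},X_{\gq,j}) = 0, \\
[Y_{\gq,i}, w_j] &= [Y_{\gq,i}, -\widetilde{\alpha}(Y_{\gq,j})] = -(Y_{\gq,i},Y_{\gq,j}) = 0, \\
[Y_{\gq,i}, w_{g+j}] &= [Y_{\gq,i}, \widetilde{\alpha}(X_{\gq,j})] = (Y_{\gq,i},X_{\gq,j}) = -\delta_{ij}.
\end{align*}
This gives $[b_k, w_\ell] = -\delta_{k\ell}$, i.e. $w = -\mathcal{B}_\gq^\ast$, which is off by a sign. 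So the honest resolution is to track the sign conventions carefully: depending on whether one defines the dual basis by $[b_i,v_j^\ast]=\delta_{ij}$ or $[v_j^\ast,b_i]=\delta_{ij}$ and on the skew-symmetry convention $(x,y) = -(y,x)$, the correct placement of the minus sign in the block matrix is exactly what makes $[b_k,w_\ell] = \delta_{k\ell}$. I would therefore write the proof so that the block matrix $\left[\begin{smallmatrix} 0 & I_g \\ -I_g & 0\end{smallmatrix}\right]$ is precisely the Gram matrix of $(\cdot,\cdot)$ in the basis $\mathcal{B}_\gq$ (or its inverse/negative, as the conventions dictate), so that the identity reduces to the tautology "dual basis $=$ (Gram matrix)$^{-1}$ applied to $\widetilde{\alpha}$ of the original basis."

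The only real subtlety, and the place I expect to spend the most care, is bookkeeping of the three conventions at play simultaneously: the skew-symmetry sign of $(\cdot,\cdot)$, the left-versus-right argument in the dual-basis defining equation for $[\cdot,\cdot]$, and whether the matrix acts on $\mathcal{B}_\gq$ as a row vector on the right or a column vector on the left. Once these are pinned down, the computation above shows each entry $[b_i, w_j]$ equals $\delta_{ij}$, and uniqueness of the dual basis of $\mathbb{D}_{\cris,\gq}(T^\ast(1))$ with respect to the perfect pairing $[\cdot,\cdot]$ forces $w = \mathcal{B}_\gq^\ast$, which is the claimed identity. No deeper input is needed — this is purely a linear-algebra statement about how a polarization intertwines a symplectic basis with its dual basis — but the sign-tracking is genuinely the substance of the lemma.
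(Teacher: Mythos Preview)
Your approach is exactly the paper's: verify directly that the claimed tuple pairs against the $b_i$ via $[\cdot,\cdot]$ to give $\delta_{ij}$, using $(x,y)=[x,\widetilde{\alpha}(y)]$ and the symplectic relations. The paper simply states the dual basis as $\{\widetilde{\alpha}(Y_{\gq,1}),\ldots,\widetilde{\alpha}(Y_{\gq,g}),-\widetilde{\alpha}(X_{\gq,1}),\ldots,-\widetilde{\alpha}(X_{\gq,g})\}$ and observes it differs from $\widetilde{\alpha}(\mathcal{B}_\gq)$ by the displayed matrix.

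Your hedging about conventions is unnecessary and stems from a concrete slip in the matrix multiplication. Treating $\widetilde{\alpha}(\mathcal{B}_\gq)$ as a column vector with entries $\widetilde{\alpha}(X_{\gq,1}),\ldots,\widetilde{\alpha}(X_{\gq,g}),\widetilde{\alpha}(Y_{\gq,1}),\ldots,\widetilde{\alpha}(Y_{\gq,g})$, left multiplication by $\left[\begin{smallmatrix}0 & I_g\\ -I_g & 0\end{smallmatrix}\right]$ gives $w_j=\widetilde{\alpha}(Y_{\gq,j})$ and $w_{g+j}=-\widetilde{\alpha}(X_{\gq,j})$, the opposite of what you wrote. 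With the correct $w$, your four lines become
\[
[X_{\gq,i},w_j]=(X_{\gq,i},Y_{\gq,j})=\delta_{ij},\qquad
[Y_{\gq,i},w_{g+j}]=-(Y_{\gq,i},X_{\gq,j})=(X_{\gq,j},Y_{\gq,i})=\delta_{ij},
\]
and the off-diagonal blocks vanish as you computed. So $[b_k,w_\ell]=\delta_{k\ell}$ on the nose and the sign discussion can be deleted entirely.
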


\begin{proof}
One can check that the dual basis is given by
$$
\mathcal{B}^\ast_\gq = \{\widetilde{\alpha}(Y_{\gq,1}),\ldots, \widetilde{\alpha}(Y_{\gq,g}), -\widetilde{\alpha}(X_{\gq,1}),\ldots,-\widetilde{\alpha}(X_{\gq,g})\}.
$$
Then we see that 
$$
\widetilde{\alpha}(\mathcal{B}_\gq) = \{\widetilde{\alpha}(X_{\gq,1}),\ldots, \widetilde{\alpha}(X_{\gq,g}),\widetilde{\alpha}(Y_{\gq,1}),\ldots, \widetilde{\alpha}(Y_{\gq,g}) \}
$$
differs from $\mathcal{B}^\ast_\gq$ by the matrix in the statement.
\end{proof}

%

The map $\alpha$ induces another map $\alpha^\ast : H^1(K_{n,\gq},T) \to H^1(K_{n,\gq},T^\ast (1))$ which is compatible with corestriction allowing us to consider $\alpha^\ast : H^1_\text{Iw}(K_{\infty,\gq},T) \to H^1_\mathrm{Iw}(K_{\infty,\gq},T^\ast (1))$. Extend $\widetilde{\alpha}$ $\mathcal{H}_{\widehat{F}_\infty}(\Gamma_p)$-linearly to
$$
\mathcal{H}_{\widehat{F}_\infty}(\Gamma_p) \otimes_{\mathbf{Z}_p} \mathbb{D}_{\cris,\gq}(T) \to  \mathcal{H}_{\widehat{F}_\infty}(\Gamma_p) \otimes_{\mathbf{Z}_p} \mathbb{D}_{\cris,\gq}(T^\ast (1)).
$$

\begin{proposition}\label{Fontaine}
The following diagram is commutative:
$$
\begin{tikzcd}
H^1_\mathrm{Iw}(K_{\infty,\gq},T) \arrow[r,"\mathcal{L}_{T,\gq}"] \arrow[d, "\alpha^\ast"]
& \mathcal{H}_{\widehat{F}_\infty}(\Gamma_p) \otimes_{\mathbf{Z}_p} \mathbb{D}_{\cris,\gq}(T) \arrow[d, "\widetilde{\alpha}"] \\
H^1_\mathrm{Iw}(K_{\infty,\gq},T^\ast (1)) \arrow[r,  "\mathcal{L}_{T^\ast (1),\gq}"]
& \mathcal{H}_{\widehat{F}_\infty}(\Gamma_p) \otimes_{\mathbf{Z}_p} \mathbb{D}_{\cris,\gq}(T^\ast (1))
\end{tikzcd}
$$
\end{proposition}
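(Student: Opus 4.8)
The plan is to read off the commutativity from the naturality, in the $G_{K_\gq}$-representation, of every map entering the definition of the two-variable regulator. Under the identification $K_{\infty,\gq}\cong k_\infty$, the map $\mathcal{L}_{T,\gq}$ is the big logarithm $\mathcal{L}_{T,k_\infty}$, i.e. the composite of $(1-\varphi)\circ (h^1_{\infty,T})^{-1}$ with the comparison morphism $(\varphi^\ast\mathbb{N}(T))^{\psi=0}\widehat{\otimes}_{\Zp}S_{F_\infty/\Qp}\to\Omega_{\Qp}\cdot\bigl(\mathcal{H}(\Gamma_0^{\mathrm{cyc}})\widehat{\otimes}\Lambda(U)\bigr)\otimes_{\Zp}\mathbb{D}_{\cris}(T)$. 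Since $\alpha\colon T_p(A)\to T_p(A^t)=T^\ast(1)$ is a morphism of $\Zp[G_{K_\gq}]$-modules between crystalline representations with Hodge--Tate weights in $\{0,1\}$, functoriality of Berger's construction of the Wach module \cite{Be04} produces a morphism $\mathbb{N}(T)\to\mathbb{N}(T^\ast(1))$ of $\AQp$-modules commuting with $\varphi$, $\psi$ and the $\Gamma_0^{\mathrm{cyc}}$-action; tensoring with the Yager module $S_{F_\infty/\Qp}$ — which depends only on $F_\infty/\Qp$, not on $T$ — yields $\mathbb{N}_{F_\infty}(T)\to\mathbb{N}_{F_\infty}(T^\ast(1))$ with the same equivariances, hence an induced map on the $\psi=1$ submodules and on $(\varphi^\ast\mathbb{N}(T))^{\psi=0}$.

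Next I would paste together three commutative squares. First, Fontaine's isomorphism $h^1_{\infty,\bullet}$ is natural in the representation: in \cite{LZ14} it is obtained by base change along $S_{F_\infty/\Qp}$ from the one-variable isomorphism $h^1_{\mathrm{Iw},\bullet}$ of \cite{Be03}, and both are functorial, so the square with vertical arrows the map induced on $\mathbb{N}_{F_\infty}(\bullet)^{\psi=1}$ and $\alpha^\ast$ on Iwasawa cohomology commutes. Second, $1-\varphi$ commutes with the induced map because $\alpha$ is $\varphi$-equivariant. Third, the comparison morphism factors through the functorial identification $\mathbb{N}(T)/\pi\mathbb{N}(T)=\mathbb{D}_{\cris}(T)$ followed by the Mellin transform, which acts only on the Iwasawa-algebra side and is independent of $T$; and the map on $\mathbb{D}_{\cris,\gq}(T)$ induced by the Wach-module morphism is precisely $\widetilde\alpha$, since $\widetilde\alpha$ was defined as the functorial morphism of filtered $\varphi$-modules attached to $\alpha$ and the embedding $\mathbb{N}(T)/\pi\mathbb{N}(T)\hookrightarrow\mathbb{D}_{\cris}(\mathcal{M}_p)$ is itself functorial. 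Composing the three squares and extending $\mathcal{H}_{\widehat{F}_\infty}(\Gamma_p)$-linearly along $\widetilde\alpha$ gives the diagram in the statement.

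The only input that is not purely formal is the naturality of $h^1_{\infty,T}$ — equivalently of the one-variable $h^1_{\mathrm{Iw},T}$ — with respect to morphisms of crystalline representations; verifying or citing this carefully is where I expect the real content to lie, even though it is standard. A small caveat: $\alpha$ is only an isogeny, so on $\Zp$-lattices it is injective with finite cokernel rather than invertible; this causes no trouble since Wach-module functoriality and the naturality of $h^1$ hold for arbitrary morphisms of representations. Alternatively, one may extend every map to $\mathcal{M}_p=T\otimes_{\Zp}\Qp$, on which $\alpha$ becomes an isomorphism of crystalline $\Qp$-representations and all the objects above are already defined, check commutativity there, and then restrict to integral Iwasawa cohomology, which maps into the $\Qp$-version.
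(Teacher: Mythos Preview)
Your proposal is correct and follows essentially the same route as the paper: decompose $\mathcal{L}_{T,k_\infty}$ as $(\mathfrak{M}^{-1}\otimes 1)\circ(1-\varphi)\circ(h^1_{\infty,T})^{-1}$ and check naturality of each factor with respect to the morphism of representations induced by $\alpha$. The only difference is that where you cite the naturality of Fontaine's isomorphism $h^1_{\mathrm{Iw},T}$ as standard, the paper actually carries this out explicitly: it writes down the cocycle formula $h^1_{n,T}(y)(\sigma)=\log_p^0(\chi(\gamma_0^{\mathrm{cyc}}))\bigl[\frac{\sigma-1}{\gamma-1}y-(\sigma-1)b\bigr]$ from \cite{Be03}, observes that since $\widetilde{\alpha}$ commutes with $\varphi$ and the $\Gamma_0^{\mathrm{cyc}}$-action one may take $\widetilde{\alpha}(b)$ as the auxiliary element for $\widetilde{\alpha}(y)$, and reads off $h^1_{n,T^\ast(1)}\circ\widetilde{\alpha}=\alpha^\ast\circ h^1_{n,T}$ directly. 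Your remark that this is ``where the real content lies'' is exactly on target.
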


\begin{proof}
We need to show that the maps induced by $\alpha$ commute with $h^1_{\mathrm{Iw},T}$, $1-\varphi$ and $\mathfrak{M}\otimes 1$. First, we show that the maps induced by the map $\alpha$ commute with Fontaine's isomorphism of $\Lambda(\Gamma_p)$-modules
$$
h^1_{\infty,T}: \mathbb{N}_{F_\infty}(T)^{\psi=1} \xrightarrow{\sim} H^1_\mathrm{Iw}(F_\infty(\mu_{p^\infty}),T)
$$
where $\mathbb{N}_{F_\infty}(T) \colonequals \mathbb{N}(T) \widehat{\otimes}_{\mathcal{O}_F} S_{F_\infty /F}$. If $x\in 1+p\mathbf{Z}_p$, we choose the largest integer $k\geq 1$ such that $x \in p^k\mathbf{Z}_p^\times$. We define $\log_p^0(x) \colonequals \frac{\log_p(x)}{p^k}$. If $y \in \mathbb{D}(T)^{\psi=1}$, then there exists $b \in \mathbf{A}\otimes_{\mathbf{Z}_p}T$ such that $(\gamma_0^\text{cyc}-1)(\varphi-1)b=(\varphi-1)y$. The isomorphism $h^1_{\mathrm{Iw},T}$ is build up from the maps
$$
h^1_{n,T}: \mathbb{D}(T)^{\psi=1} \to H^1(\Qp(\mu_{p^n}),T)
$$
defined by $h^1_{n,T}(y)(\sigma) \colonequals \log^0_p(\chi(\gamma_0^{\text{cyc}})) \left[\sigma \mapsto \frac{\sigma-1}{\gamma-1}y - (\sigma-1)b \right]$ for any $y\in \mathbb{D}(T)^{\psi=1}$ and $\sigma \in H^1(\Qp(\mu_{p^n}),T)$. The existence of $b$ as well as the fact that $h^1_{n,T}$ defines a cocycle is shown in \cite[Proposition I.8]{Be03}. The $h^1_{n,T}$ are compatible with corestriction $H^1(\Qp(\mu_{p^n}),T)\to H^1(\Qp(\mu_{p^{n-1}}),T)$ and give an isomorphism \cite[Theorem II.8]{Be03}
\begin{align}\label{Fontaine isomorphism}
h^1_{\mathrm{Iw},T}:\mathbb{D}(T)^{\psi=1} & \to H^1_\mathrm{Iw}(\Qp(\mu_{p^\infty}),T) \\
y &\mapsto \varprojlim_n h^1_{n,T}(y). \nonumber
\end{align}
If $\widetilde{\alpha}$ is the map $\mathbb{D}(T)^{\psi=1}\to \mathbb{D}(T^\ast (1))^{\psi=1}$ given by functoriality, then $\widetilde{\alpha}$ commutes with $\varphi$ and the action of $\Gamma_0^\text{cyc}$. Therefore, if $b \in \mathbf{A}\otimes_{\mathbf{Z}_p}T$ is a solution of $(\gamma_0^\text{cyc}-1)(\varphi-1)b=(\varphi-1)y$, then $\widetilde{\alpha}(b) \in \mathbf{A}\otimes_{\mathbf{Z}_p}T^\ast (1)$ will be a solution of $(\gamma_0^\text{cyc}-1)(\varphi-1)\widetilde{\alpha}(b)=(\varphi-1)\widetilde{\alpha}(y)$. It follows that
\begin{align*}
h^1_{n,T^\ast (1)}(\widetilde{\alpha}(y))(\sigma) &= \log^0_p(\chi(\gamma_0^{\text{cyc}}))\left[\sigma \mapsto \frac{\sigma-1}{\gamma-1}\widetilde{\alpha}(y) - (\sigma-1)\widetilde{\alpha}(b) \right] \\
& = \alpha^\ast \circ h^1_{n,T}.
\end{align*}
Because $A$ is supersingular at both primes above $p$, $T$ is irreducible and thus has no quotient isomorphic to $\Qp$. So $h^1_\text{Iw,T}$ is really a map from $\mathbb{N}(T)^{\psi=1}$ to $H^1_\text{Iw}(\Qp(\mu_{p^\infty}),T)$. By passing to the limit, we conclude that the maps induced by $\alpha$ commutes with the isomorphism \eqref{Fontaine isomorphism}. The map $h^1_{\infty,T}$ of Loeffler--Zerbes is then constructed via the inverse limit of the $h^1_{\text{Iw},T}$ in the unramified tower $F_\infty(\mu_{p^\infty}) / F(\mu_{p^\infty})$ (see \cite[Proposition 4.5]{LZ14}) and thus also commutes with the maps induced by $\alpha$. Moreover, it is clear that $\widetilde{\alpha}$ commutes with 
$$
\mathbb{N}_{F_\infty}(T)^{\psi=1}\xrightarrow{1-\varphi} (\varphi^\ast \mathbb{N}(T))^{\psi=0} \widehat{\otimes}_{\mathbf{Z}_p} S_{F_\infty/F} \hookrightarrow S_{F_\infty/F}\widehat{\otimes}_{\mathbf{Z}_p} \Lambda(\Gamma^\text{cyc})\otimes_{\Zp}\mathbb{D}_{\cris,\gq}(T).
$$

\end{proof}

\begin{proposition}\label{coleman functorial}
Let $\mathcal{B}_\gq$ be a Hodge-compatible symplectic basis of $\mathbb{D}_{\cris,\gq}(T)$. Suppose that the matrix $C_\gq$ with respect to this basis is block anti-diagonal. Let $z \in H^1_\mathrm{Iw}(K_{\infty,\gq},T)$. Then,
$$
D \cdot \text{Col}_T^\gq(z) = \left[
\begin{array}{c|c}
0 & I_{g} \\
\hline
-I_{g} & 0
\end{array}
\right] \cdot \text{Col}_{T^\ast (1)}^\gq(\alpha^\ast(z))
$$
where $D \in \mathrm{GL}_{2g}\left(\mathrm{Frac}(\mathcal{H}(\Gamma^\text{cyc}))\right)$ is a block diagonal matrix.
\end{proposition}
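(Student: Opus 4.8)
The plan is to transport the two-variable decomposition of Proposition~\ref{coleman inverse limit} through the functoriality statement of Proposition~\ref{Fontaine} and then to exploit the block anti-diagonal shape of $C_\gq$. Write $(\mathcal{B}_\gq)$ for the row vector attached to the chosen Hodge-compatible symplectic basis of $\mathbb{D}_{\cris,\gq}(T)$, write $(\mathcal{B}_\gq^\ast)$ for the dual basis of $\mathbb{D}_{\cris,\gq}(T^\ast(1))$, and set $J := \left[\begin{smallmatrix} 0 & I_g \\ -I_g & 0\end{smallmatrix}\right]$, so that the preceding lemma reads $(\widetilde{\alpha}(\mathcal{B}_\gq)) = (\mathcal{B}_\gq^\ast)\,J$ as row vectors. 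Specializing Proposition~\ref{coleman inverse limit} at $\gq$ gives $\mathcal{L}_{T,\gq}(z) = (\mathcal{B}_\gq)\,M_{T,\gq}\,\text{Col}_T^\gq(z)$ and $\mathcal{L}_{T^\ast(1),\gq}(w) = (\mathcal{B}_\gq^\ast)\,M_{T^\ast(1),\gq}\,\text{Col}_{T^\ast(1)}^\gq(w)$. Applying $\widetilde{\alpha}$ (extended $\mathcal{H}_{\widehat{F}_\infty}(\Gamma_p)$-linearly) to the first identity, the scalar matrices $M_{T,\gq}$ and $\text{Col}_T^\gq(z)$ pass through $\widetilde{\alpha}$, so $\widetilde{\alpha}(\mathcal{L}_{T,\gq}(z)) = (\mathcal{B}_\gq^\ast)\,J\,M_{T,\gq}\,\text{Col}_T^\gq(z)$; by Proposition~\ref{Fontaine} this equals $\mathcal{L}_{T^\ast(1),\gq}(\alpha^\ast z) = (\mathcal{B}_\gq^\ast)\,M_{T^\ast(1),\gq}\,\text{Col}_{T^\ast(1)}^\gq(\alpha^\ast z)$. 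Since $\mathcal{B}_\gq^\ast$ remains a basis after extending scalars, I cancel it and obtain
$$
J\,M_{T,\gq}\,\text{Col}_T^\gq(z) \;=\; M_{T^\ast(1),\gq}\,\text{Col}_{T^\ast(1)}^\gq(\alpha^\ast z).
$$

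The next step is the matrix identity $M_{T,\gq}^t\,M_{T^\ast(1),\gq} = \tfrac{\log(1+X)}{pX}\,I_{2g}$, which I would read off the definitions of the logarithmic matrices. Since $C_{\varphi,\gq}^\ast = \tfrac1p\bigl((C_{\varphi,\gq})^t\bigr)^{-1}$, the transposed Frobenius powers cancel the powers of $C_{\varphi,\gq}^\ast$ and contribute a factor $p^{-(n+1)}I_{2g}$; a direct computation from \eqref{matrice duale} gives $C_{k,\gq}^t\,C_{k,\gq}^\ast = \Phi_{p^k}(1+X)\,I_{2g}$, a scalar matrix, so these factors can be peeled off one at a time. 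Telescoping $\prod_{k=1}^n\Phi_{p^k}(1+X) = \tfrac{(1+X)^{p^n}-1}{X}$ and passing to the limit, with $\tfrac{(1+X)^{p^n}-1}{p^n}\to\log(1+X)$, yields the identity; in particular $M_{T^\ast(1),\gq}$ is invertible over $\mathrm{Frac}(\mathcal{H}(\Gamma^{\mathrm{cyc}}))$ with $M_{T^\ast(1),\gq}^{-1} = \tfrac{pX}{\log(1+X)}\,M_{T,\gq}^t$. (Alternatively this follows by comparing the two expressions for Perrin--Riou's pairing furnished by Lemma~\ref{matrices product}, using Lemma~\ref{rank} with $I_\gq=\{1,\dots,d\}$ to see that $\text{Col}_T^\gq$ and $\text{Col}_{T^\ast(1)}^\gq$ have full-rank image, which upgrades the scalar identity to a matrix one.)

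Now I would bring in the hypothesis that $C_\gq$ is block anti-diagonal. Then $C_{\varphi,\gq}$, $C_\gq^{-1}$ and every $C_{k,\gq}$ are block anti-diagonal, and a product of an odd number $2n+1$ of block anti-diagonal $2\times 2$ block matrices is again block anti-diagonal; passing to the limit shows $M_{T,\gq} = \left[\begin{smallmatrix} 0 & P \\ Q & 0\end{smallmatrix}\right]$ with $P,Q$ having entries in $\mathcal{H}(\Gamma^{\mathrm{cyc}})$. Multiplying the displayed identity $J\,M_{T,\gq}\,\text{Col}_T^\gq(z) = M_{T^\ast(1),\gq}\,\text{Col}_{T^\ast(1)}^\gq(\alpha^\ast z)$ on the left by $J\,M_{T^\ast(1),\gq}^{-1}$ yields $D\cdot\text{Col}_T^\gq(z) = J\cdot\text{Col}_{T^\ast(1)}^\gq(\alpha^\ast z)$ with $D := J\,M_{T^\ast(1),\gq}^{-1}\,J\,M_{T,\gq} = \tfrac{pX}{\log(1+X)}\,J\,M_{T,\gq}^t\,J\,M_{T,\gq}$; substituting the block form of $M_{T,\gq}$,
$$
D \;=\; \frac{pX}{\log(1+X)}\left[\begin{array}{c|c} P^t Q & 0 \\ \hline 0 & Q^t P\end{array}\right],
$$
which is block diagonal. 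It lies in $\mathrm{GL}_{2g}\bigl(\mathrm{Frac}(\mathcal{H}(\Gamma^{\mathrm{cyc}}))\bigr)$ since $\pm\det P\cdot\det Q = \det M_{T,\gq}$ is a nonzero power of $\tfrac{\log(1+X)}{pX}$ up to a unit by Proposition~\ref{determinant}, forcing $\det P\neq 0$ and $\det Q\neq 0$.

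I expect the main obstacle to be the matrix identity $M_{T,\gq}^t\,M_{T^\ast(1),\gq} = \tfrac{\log(1+X)}{pX}\,I_{2g}$: one has to track precisely how the transposed Frobenius powers cancel against $(C_{\varphi,\gq}^\ast)^{n+1}$ and how the cyclotomic factors telescope in the $n\to\infty$ limit (or, on the alternative route, justify the full-rank input and the passage from a scalar to a matrix identity). A more clerical but still error-prone point is keeping the signs consistent throughout, both in the correspondence $\widetilde{\alpha}(\mathcal{B}_\gq)\leftrightarrow\mathcal{B}_\gq^\ast$ and in the matrix $J$.
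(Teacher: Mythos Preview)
Your argument is correct and follows the same skeleton as the paper's: apply the regulator decomposition of Proposition~\ref{coleman inverse limit}, push it through Proposition~\ref{Fontaine}, use the change-of-basis lemma to pass from $\mathcal{B}_\gq^\ast$ to $\widetilde{\alpha}(\mathcal{B}_\gq)$, cancel the basis, and then argue that the resulting matrix $D$ is block diagonal from the block anti-diagonal shape of $M_{T,\gq}$.

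The one substantive difference is in how you conclude block-diagonality. You establish and use the identity $M_{T,\gq}^{t}\,M_{T^\ast(1),\gq}=\tfrac{\log(1+X)}{pX}\,I_{2g}$ (your telescoping argument is correct: $C_{k,\gq}^{t}C_{k,\gq}^\ast=\Phi_{p^k}(1+X)I$ and the Frobenius contributions collapse to $p^{-(n+1)}I$), which lets you write $D=\tfrac{pX}{\log(1+X)}\,J\,M_{T,\gq}^{t}\,J\,M_{T,\gq}$ and compute its blocks explicitly. The paper instead observes directly that the hypothesis on $C_\gq$ makes \emph{both} $M_{T,\gq}$ and $M_{T^\ast(1),\gq}$ block anti-diagonal, so that $D=B\,M_{T^\ast(1),\gq}^{-1}\,B^{-1}\,M_{T,\gq}$ is a product of four block anti-diagonal matrices and hence block diagonal; Proposition~\ref{determinant} is invoked only to invert $M_{T^\ast(1),\gq}$ over $\mathrm{Frac}(\mathcal{H}(\Gamma^{\mathrm{cyc}}))$. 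Your route yields an explicit formula for the diagonal blocks of $D$ at the cost of the extra limit computation; the paper's is a little shorter but less informative. The sign bookkeeping you flag (row versus column conventions in $J\widetilde{\alpha}(\mathcal{B}_\gq)=\mathcal{B}_\gq^\ast$) is indeed the only delicate clerical point, and it only affects $D$ by an overall sign, which is immaterial for the statement.
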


\begin{proof}
To simplify the notation, write $B$ for the change of basis matrix $\left[
\begin{array}{c|c}
0 & I_{g} \\
\hline
-I_{g} & 0
\end{array}
\right]$. The hypothesis on $C_\gq$ implies that $M_{T,\gq}$ is also block anti-diagonal since $M_{T,\gq}$ is the limit of $2n+1$ block anti-diagonal matrices. The same is true for $M_{T^\ast (1),\gq}$. By \eqref{regulator decomposition} and proposition \ref{Fontaine}, we have
\begin{align*}
\widetilde{\alpha} ( (X_{\gq,1},\ldots,X_{\gq,g}&,Y_{\gq,1},\ldots, Y_{\gq,g})\cdot M_{T,\gq} \cdot \text{Col}_T^\gq(z) )\\
&= (X_{\gq,1}^\prime,\ldots,X_{\gq,g}^\prime,Y_{\gq,1}^\prime,\ldots, Y_{\gq,g}^\prime)\cdot M_{T^\ast(1),\gq}\cdot \text{Col}_{T^\ast (1)}^\gq(\alpha^\ast (z)).
\end{align*}
By change of basis on the right hand side and since $\widetilde{\alpha}$ is $\mathcal{H}_{\widehat{F}_\infty}(\mathcal{G})$-linear, we get
\begin{align*}
(\widetilde{\alpha}(&X_{\gq,1}),\ldots,\widetilde{\alpha}(X_{\gq,g}),\widetilde{\alpha}(Y_{\gq,1}),\ldots, \widetilde{\alpha}(Y_{\gq,g})) \cdot M_{T,\gq}\cdot \text{Col}_T^\gq(z) \\&= (\widetilde{\alpha}(X_{\gq,1}),\ldots,\widetilde{\alpha}(X_{\gq,g}),\widetilde{\alpha}(Y_{\gq,1}),\ldots, \widetilde{\alpha}(Y_{\gq,g})) B M_{T^\ast (1),\gq}B^{-1}\cdot B \cdot \text{Col}_{T^\ast(1)}^\gq(\alpha^\ast (z)).
\end{align*}
From now on, we omit the reference to the basis $\widetilde{\alpha}(\mathcal{B}_\gq)$. The matrix $M_{T^\ast (1),\gq}$ is not invertible over $\Lambda(\Gamma^\mathrm{cyc}$), but by proposition \ref{determinant} (with $T$ replaced by $T^\ast (1)$), we may invert it in $\mathrm{GL}_{2g}(\mathrm{Frac}(\mathcal{H}(\Gamma^\mathrm{cyc})))$. We conclude that
$$
D \cdot \text{Col}_T^\gq(z) = B \cdot \text{Col}_{T^\ast (1)}^\gq(\alpha^\ast (z))
$$
where $D = BM_{T^\ast (1),\gq}^{-1}B^{-1}M_{T,\gq}$ is a block diagonal matrix.
\end{proof}

\begin{corollary}\label{kernels}
Let $\mathcal{B}_\gq$ be a Hodge-compatible symplectic basis of $\mathbb{D}_{\cris,\gq}(T)$. Suppose that the matrix $C_\gq$ with respect to this basis is block anti-diagonal. Let $I_\gq$ be either the set $\{1,2,\ldots,g\}$ or $\{g+1,g+2,\ldots,2g\}$. Then, $z \in \ker \text{Col}_{T,I_\gq}^\gq$ if and only if $\alpha^\ast(z) \in \ker \text{Col}_{T^\ast (1),I_\gq^c}^\gq$.
\end{corollary}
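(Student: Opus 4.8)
The statement is a direct consequence of Proposition~\ref{coleman functorial}, so the plan is only to carry out the block bookkeeping it leaves open. First I would apply Proposition~\ref{coleman functorial} to the given Hodge-compatible symplectic basis $\mathcal{B}_\gq=\{X_{\gq,1},\dots,X_{\gq,g},Y_{\gq,1},\dots,Y_{\gq,g}\}$, whose matrix $C_\gq$ is block anti-diagonal by hypothesis, obtaining for every $z\in H^1_\mathrm{Iw}(K_{\infty,\gq},T)$ the identity
$$
D\cdot\text{Col}_T^\gq(z)=B\cdot\text{Col}_{T^\ast(1)}^\gq(\alpha^\ast(z)),\qquad B=\left[\begin{array}{c|c}0 & I_g\\ \hline -I_g & 0\end{array}\right],
$$
where $D=\mathrm{diag}(D_1,D_2)\in\mathrm{GL}_{2g}(\mathrm{Frac}(\mathcal{H}(\Gamma^\text{cyc})))$ is block diagonal with $D_1,D_2\in\mathrm{GL}_g(\mathrm{Frac}(\mathcal{H}(\Gamma^\text{cyc})))$, the right-hand side being expressed in the coordinates of the dual basis $\mathcal{B}_\gq^\ast$ that define $\text{Col}_{T^\ast(1)}^\gq$.

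Next I would split the two $2g$-component vectors into their first $g$ and last $g$ coordinates. Under the chosen ordering of $\mathcal{B}_\gq$ the first block of $\text{Col}_T^\gq(z)$ is precisely $\text{Col}_{T,\{1,\dots,g\}}^\gq(z)$ and the second block is $\text{Col}_{T,\{g+1,\dots,2g\}}^\gq(z)$, and similarly for $\alpha^\ast(z)$ with $\mathcal{B}_\gq^\ast$. Since $B$ carries a block column $(a,b)$ to $(b,-a)$, matching the two blocks of the displayed identity gives
$$
D_1\cdot\text{Col}_{T,\{1,\dots,g\}}^\gq(z)=\text{Col}_{T^\ast(1),\{g+1,\dots,2g\}}^\gq(\alpha^\ast(z)),\qquad D_2\cdot\text{Col}_{T,\{g+1,\dots,2g\}}^\gq(z)=-\,\text{Col}_{T^\ast(1),\{1,\dots,g\}}^\gq(\alpha^\ast(z)).
$$
Since $D_1$ and $D_2$ are invertible, each equation forces one side to vanish precisely when the other does. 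Taking $I_\gq=\{1,\dots,g\}$, so that $I_\gq^c=\{g+1,\dots,2g\}$, the first equation reads $z\in\ker\text{Col}_{T,I_\gq}^\gq\iff\alpha^\ast(z)\in\ker\text{Col}_{T^\ast(1),I_\gq^c}^\gq$; taking $I_\gq=\{g+1,\dots,2g\}$, the second equation gives the same statement. This is the corollary.

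I expect no serious obstacle here: all of the genuine content — the compatibility of the isogeny with Fontaine's isomorphism, the explicit reciprocity law, and the block anti-diagonality of $M_{T,\gq}$ and $M_{T^\ast(1),\gq}$ — has already been packaged into Proposition~\ref{coleman functorial}. The one point worth spelling out is the passage ``$D_i u=0\Rightarrow u=0$'' for the $\Lambda(\Gamma_p)$-valued vectors $u$ occurring above: one reads the identity of Proposition~\ref{coleman functorial} inside the fraction field $Q$ of the integral domain $\mathcal{H}(\Gamma^\text{cyc})\widehat{\otimes}\Lambda(U)$, which (using $\Gamma_p\cong U\times\Gamma^\text{cyc}$) contains $\Lambda(\Gamma_p)$ as well as $\mathrm{Frac}(\mathcal{H}(\Gamma^\text{cyc}))$, hence the entries of $D^{-1}$; over $Q$ the matrices $D_1,D_2$ are invertible, so multiplication by them is injective on $Q^{\oplus g}\supseteq\Lambda(\Gamma_p)^{\oplus g}$.
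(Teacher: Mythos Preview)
Your proof is correct and follows essentially the same approach as the paper: both invoke Proposition~\ref{coleman functorial}, split $D$ into invertible $g\times g$ blocks $D_1,D_2$, and read off the two block equations to get the equivalence. Your extra paragraph justifying why invertibility of $D_i$ over $\mathrm{Frac}(\mathcal{H}(\Gamma^\text{cyc}))$ suffices to conclude injectivity on the $\Lambda(\Gamma_p)$-valued Coleman vectors is a detail the paper leaves implicit.
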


\begin{proof}
By proposition \ref{coleman functorial}, we can write
$$
\left[
\begin{array}{c|c}
D_1 & 0 \\
\hline
0 & D_2
\end{array}
\right] \cdot \text{Col}_T^\gq(z) = B\cdot \text{Col}_{T^\ast (1)}^\gq(\alpha^\ast(z))
$$
for some $D_1,D_2 \in \mathrm{GL}_{g}\left(\mathrm{Frac}(\mathcal{H}(\Gamma^\mathrm{cyc}))\right)$. Thus, the vector $(\text{Col}_{T,1}^\gq(z), \ldots, \text{Col}_{T,g}^\gq(z))^t$ is the zero vector if and only if $(\text{Col}_{T^\ast(1),g+1}^\gq(\alpha^\ast(z)),\ldots, \text{Col}_{T^\ast(1),2g}^\gq(\alpha^\ast(z)))^t$ is the zero vector. Similarly, $(\text{Col}_{T,g+1}^\gq(z), \ldots, \text{Col}_{T,2g}^\gq(z))^t=0$ if and only if 
$$
(-\text{Col}_{T^\ast(1),1}^\gq(\alpha^\ast(z)),\ldots, -\text{Col}_{T^\ast(1),g}^\gq(\alpha^\ast(z)))^t=0.
$$
\end{proof}

\begin{theorem}\label{Selmer functoriality}
Suppose that there exists a polarization $\alpha:A \to A^t$. For $\gq \in \{\gp,\gp^c\}$, let $\mathcal{B}_\gq$ be a Hodge-compatible symplectic basis of $\mathbb{D}_{\cris,\gq}(T)$. Suppose that the matrix $C_\gq$ with respect to this basis is block anti-diagonal. Let $I_\gq$ be either the set $\{1,2,\ldots,g\}$ or $\{g+1,g+2,\ldots,2g\}$. Then, $\alpha^\ast \left( \Sel_{\underline{I}}(T^\dagger/K_\infty) \right) \subseteq \text{Sel}_{\underline{I}^c}(T^\ast(1)^\dagger/K_\infty)$.
\end{theorem}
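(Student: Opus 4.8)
The plan is to show that $\alpha^\ast$ respects the presentation of each Selmer group as a global cohomology group cut out by local conditions. Write $\Sel_{\underline{I}}(T^\dagger/K_\infty)=\ker\bigl(H^1_\Sigma(K_\infty,T^\dagger)\to\mathcal{P}_{\Sigma,\underline{I}}(T^\dagger/K_\infty)\bigr)$ and likewise for $\Sel_{\underline{I}^c}(T^\ast(1)^\dagger/K_\infty)$. Because $\alpha$ is defined over $K$, the map it induces on coefficients is $G_K$-equivariant, so $\alpha^\ast$ acts on $H^1_\Sigma(K_\infty,-)$ and commutes with restriction to every place of $K_\infty$; it therefore suffices to check, place by place, that $\alpha^\ast$ sends the local condition at $v$ defining $\Sel_{\underline{I}}(T^\dagger/K_\infty)$ into the one defining $\Sel_{\underline{I}^c}(T^\ast(1)^\dagger/K_\infty)$. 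For the finitely many $v\in\Sigma^\prime$ with $v\nmid p$ this is immediate: $\alpha$ is an isogeny, so $T^\dagger$ and $T^\ast(1)^\dagger$ have the same ramification (hence $\Sigma$ is admissible for both) and the unramified subgroup $H^1_f(K_{\infty,v},-)$ is preserved by any morphism of Galois modules; compare also $H^1_f(K_{n,v},T^\dagger)^\perp=H^1_f(K_{n,v},T^\ast(1)^\dagger)$ from \cite[Proposition 3.8]{BK90}.

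The substantive step is at $\gq\in\{\gp,\gp^c\}$, where $K_{\infty,\gq}\cong k_\infty$. By definition $H^1_{I_\gq}(K_{\infty,\gq},T^\dagger)$ (resp.\ $H^1_{I_\gq^c}(K_{\infty,\gq},T^\ast(1)^\dagger)$) is the orthogonal complement under local Tate duality of $\ker\mathrm{Col}_{T,I_\gq}^\gq$ (resp.\ of $\ker\mathrm{Col}_{T^\ast(1),I_\gq^c}^\gq$). I would combine two facts. First, since $\alpha$ is a polarization it is self-dual, so the Weil pairings identifying $T^\dagger$ and $T^\ast(1)^\dagger$ with the Cartier duals of $T^\ast(1)$ and $T$ are $\alpha$-compatible; consequently $\alpha^\ast$ on the $H^1$ of the discrete modules is adjoint, under local Tate duality, to the map on Iwasawa cohomology of the Tate modules induced by a fixed quasi-inverse isogeny of $\alpha$. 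Second, Corollary \ref{kernels} — which rests on the block anti-diagonality of $C_\gq$ forcing $M_{T,\gq}$ and $M_{T^\ast(1),\gq}$ to be block anti-diagonal, together with $I_\gq$ being one of the two $g$-element half-blocks (Proposition \ref{coleman functorial}) — combined with $\Lambda(\Gamma_p)$ being a domain and the fact that the two kernels have equal $\Lambda(\Gamma_p)$-rank $g$ (Lemma \ref{rank}), shows that this adjoint map carries $\ker\mathrm{Col}_{T^\ast(1),I_\gq^c}^\gq$ into $\ker\mathrm{Col}_{T,I_\gq}^\gq$. Putting these together: for $x\in H^1_{I_\gq}(K_{\infty,\gq},T^\dagger)$ and any $w\in\ker\mathrm{Col}_{T^\ast(1),I_\gq^c}^\gq$, the pairing $\langle w,\alpha^\ast(x)\rangle$ equals, up to a unit and sign, the pairing of $x$ with an element of $\ker\mathrm{Col}_{T,I_\gq}^\gq$, hence vanishes, so $\alpha^\ast(x)\in H^1_{I_\gq^c}(K_{\infty,\gq},T^\ast(1)^\dagger)$. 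Alternatively — more computationally, but bypassing the duality bookkeeping — one uses the explicit reciprocity law \eqref{reciprocity} and Lemma \ref{matrices product} to reduce vanishing of $\langle\cdot,\cdot\rangle$ to vanishing of $\mathrm{Col}_T^\gq(\cdot)^t\,\mathrm{Col}_{T^\ast(1)}^\gq(\cdot)$, and then transports this condition across $\alpha^\ast$ using Proposition \ref{coleman functorial}.

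I expect the main obstacle to be precisely this local step at $\gq$: pinning down how $\alpha^\ast$ interacts with local Tate duality — the self-duality of the polarization, including the $\Zp^\times$-scalar and sign ambiguities transported by the Weil and Perrin-Riou pairings — and keeping enough control of the orthogonal complements that the finite-index discrepancy coming from $\alpha$ being merely an isogeny, which disappears only because $\Lambda(\Gamma_p)$ is a domain, does not leak into the inclusion. The reduction to local conditions and the places away from $p$ are routine functoriality.
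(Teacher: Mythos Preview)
Your proposal is correct and follows the same overall outline as the paper: reduce to local conditions, dispose of $v\nmid p$ by functoriality of unramified cohomology, and use Corollary~\ref{kernels} (hence the block anti-diagonal hypothesis) for the step at $\gq$.

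The one place where the paper argues more directly is the Tate-duality step at $\gq$. You pass to the adjoint of $\alpha^\ast$ under local Tate duality, identify it with a quasi-inverse isogeny, and then invoke rank-$g$ considerations over the domain $\Lambda(\Gamma_p)$ to push $\ker\mathrm{Col}_{T^\ast(1),I_\gq^c}^\gq$ into $\ker\mathrm{Col}_{T,I_\gq}^\gq$. The paper instead stays with $\alpha^\ast$ throughout: from $\langle\mathrm{res}_\gq(z),x\rangle_{\mathrm{Tate}}=0$ it gets $\langle\alpha^\ast\mathrm{res}_\gq(z),\alpha^\ast x\rangle_{\mathrm{Tate}}=0$ by functoriality of cup products, then uses that $\alpha^\ast$ is \emph{surjective} on Iwasawa cohomology (as $\alpha$ is an isogeny) together with the biconditional in Corollary~\ref{kernels} to conclude that every $y\in\ker\mathrm{Col}_{T^\ast(1),I_\gq^c}^\gq$ is already of the form $\alpha^\ast(x)$ with $x\in\ker\mathrm{Col}_{T,I_\gq}^\gq$. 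This bypasses the adjoint/quasi-inverse bookkeeping and the rank argument entirely, so none of the ``finite-index discrepancy'' or sign/scalar issues you flag actually need to be tracked. Your alternative via the reciprocity law \eqref{reciprocity} and Lemma~\ref{matrices product} would also work but is not the route the paper takes.
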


\begin{proof}
Let $z \in \Sel_{\underline{I}}(T^\dagger/K_\infty)$. By definition, this means that $z \in H^1_{\Sigma^\prime}(K_\infty,T^\dagger)$ is such that its localization at the primes above $p$ in $K_\infty$ lands in $H^1_{I_\gq}(K_{\infty,\gq},T^\dagger)$ and its localization at the primes $v$ not dividing $p$ lands in $H^1_f(K_{\infty,v},T^\dagger)$. We separate the proof in two steps depending on whether or not $v$ divides $p$. \\
\textbf{Case 1 : $v$ divides $p$.} By hypothesis,
$$
\mathrm{res}_\gq(z) \in H^1_{I_\gq}(K_{\infty,\gq},T^\dagger) = \left( \ker \col_{T,I_\gq}^\gq \right)^\perp.
$$
It follows that $\langle \mathrm{res}_\gq(z), x \rangle_{\text{Tate}} = 0$ for all $x \in \ker \col_{T,I_\gq}^\gq$ where $\langle \sim,\sim \rangle_\text{Tate}$ is the local Tate pairing
$$
H^1_\mathrm{Iw}(K_{\infty,\gq},T) \times H^1(K_{\infty,\gq}, T^\dagger) \to \mathbf{Q}_p / \mathbf{Z}_p.
$$
Functoriality of cup products implies that $$\langle \alpha^\ast (\mathrm{res}_\gq(z)),\alpha^\ast(x)\rangle_{Tate}=0$$ for all $x\in \ker \col_{T,I_\gq}^\gq$. Corollary \ref{kernels} tells us that $\alpha^\ast(x) \in \ker \col_{T^\ast(1),I_\gq^c}^\gq$. By surjectivity of $\alpha^\ast$ (since $\alpha$ is an isogeny), we know that all $y \in \ker \col_{T^\ast(1),I_\gq^c}^\gq$ is of the form $\alpha^\ast(x^\prime)$ for some $x^\prime \in H^1_\mathrm{Iw}(K_{\infty,\gq},T)$. Again by corollary \ref{kernels}, we get that $x^\prime$ is in fact in $\ker \col_{T,I_\gq}^\gq$. Hence, $\langle \alpha^\ast(\mathrm{res}_\gq(z)), y \rangle_\text{Tate}=0$ for all $y \in \ker \col_{T^\ast(1),I_\gq^c}^\gq$. In this case, we conclude that $\mathrm{res}_\gq(\alpha^\ast(z))=\alpha^\ast(\mathrm{res}_\gq(z)) \in H^1_{I_\gq^c}(K_{\infty,\gq}, T^\ast(1)^\dagger)$ like we wanted. \\
\textbf{Case 2 : $v$ does not divide $p$.} Let $v$ be a prime in $K_\infty$ such that $v \nmid p$. The local condition $H^1_f(K_{\infty,v},T^\dagger)$ is defined as the direct limit of the unramified subgroups
$$
H^1_f(K_{n,w},T^\dagger)=\ker \left( H^1(K_{n,w},T^\dagger)\xrightarrow{\mathrm{res}_w}H^1(I_{n,w},(T^\dagger)^{\Gal(K_{n,w}^\mathrm{ur}/K_{n,w})}\right)
$$
where $w$ is a prime under $v$ in $K_{n}$ and $I_{n,w}$ is the inertia subgroup of $w$ inside the Galois group $\Gal(\overline{K_{n,w}}/K_{n,w})$. Since $\alpha^\ast$ commutes with $\mathrm{res}_w$, we get $\mathrm{res}_v(\alpha^\ast(z)) \in H^1_f(K_{\infty,v},T^\ast (1)^\dagger)$. 
Combining cases 1 and 2, it follows that $$\mathrm{res}(\alpha^\ast(z)) \in \mathcal{P}_{\Sigma,\underline{I}^c}(T^\ast (1)^\dagger/K_\infty).$$
\end{proof}

The map $\alpha^\ast$ of theorem \ref{Selmer functoriality} induces a $\Lambda(\Gamma)$-modules homomorphism $$\alpha^\vee: X_{\underline{I}^c}(T^\ast(1)^\dagger/K_\infty) \to X_{\underline{I}}(T^\dagger/K_\infty).$$
As in \cite[Lemma 4.3.1]{LLTT}, we get the following corollary:

\begin{corollary}\label{cor:both torsion}
The $\Lambda(\Gamma)$-module $X_{\underline{I}}(T^\dagger/K_\infty)$ is torsion if and only if the $\Lambda(\Gamma)$-module $X_{\underline{I}^c}(T^\ast(1)^\dagger/K_\infty)$ is torsion.
\end{corollary}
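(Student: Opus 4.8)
The plan is to follow the proof of \cite[Lemma 4.3.1]{LLTT}: construct the Pontryagin-dual arrow opposite to $\alpha^\vee$, observe that the two composites are multiplication by a fixed nonzero integer, and deduce that the two modules have the same $\Lambda(\Gamma)$-rank. First I would record that $A^t$ is again a polarized abelian variety with good supersingular reduction at the primes above $p$, and that, $\alpha$ being an isogeny, there is an isogeny $\beta\colon A^t\to A$ with $\beta\circ\alpha=[n]_A$ and $\alpha\circ\beta=[n]_{A^t}$ for some positive integer $n$; one may take $\beta$ to be a dual polarization of $\alpha$. The point is then that the data $(A^t,\beta,\underline{I}^c)$ satisfies the hypotheses of Theorem \ref{Selmer functoriality}: the module $\mathbb{D}_{\cris,\gq}(T_p(A^t))$ is the Tate dual $\mathbb{D}_{\cris,\gq}(T^\ast(1))$ of $\mathbb{D}_{\cris,\gq}(T)$, whose Frobenius matrix $C_{\varphi,\gq}^\ast=\frac{1}{p}(C_{\varphi,\gq}^{-1})^t$ is again block anti-diagonal when $C_{\varphi,\gq}$ is; $\underline{I}^c$ is again one of the two admissible index sets; and, by the change-of-basis lemma of section \ref{sct:Abelian}, the basis dual to a Hodge-compatible symplectic basis for $\alpha$ is, up to the symplectic change-of-basis matrix, a Hodge-compatible symplectic basis for $\beta$. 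Under the identifications $T_p(A^t)^\dagger=T^\ast(1)^\dagger$ and $T_p(A^t)^\ast(1)^\dagger=T^\dagger$, Theorem \ref{Selmer functoriality} applied to $(A^t,\beta,\underline{I}^c)$ then gives $\beta^\ast\colon\Sel_{\underline{I}^c}(T^\ast(1)^\dagger/K_\infty)\to\Sel_{\underline{I}}(T^\dagger/K_\infty)$, which dualizes to a $\Lambda(\Gamma)$-homomorphism $\beta^\vee\colon X_{\underline{I}}(T^\dagger/K_\infty)\to X_{\underline{I}^c}(T^\ast(1)^\dagger/K_\infty)$.

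Since $\beta\circ\alpha$ and $\alpha\circ\beta$ are multiplication by $n$, the composites $\beta^\ast\circ\alpha^\ast$ and $\alpha^\ast\circ\beta^\ast$ of the induced maps on Selmer groups are multiplication by $n$, and hence so are $\alpha^\vee\circ\beta^\vee$ on $X_{\underline{I}}(T^\dagger/K_\infty)$ and $\beta^\vee\circ\alpha^\vee$ on $X_{\underline{I}^c}(T^\ast(1)^\dagger/K_\infty)$, where $\alpha^\vee$ is the homomorphism produced just before the statement. Now suppose $X_{\underline{I}}(T^\dagger/K_\infty)$ is $\Lambda(\Gamma)$-torsion. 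Then $\im(\alpha^\vee)$ is torsion, so $X_{\underline{I}^c}(T^\ast(1)^\dagger/K_\infty)/\ker(\alpha^\vee)$ is torsion; and $\ker(\alpha^\vee)\subseteq\ker(\beta^\vee\circ\alpha^\vee)$ is annihilated by $n$, hence torsion over $\Lambda(\Gamma)$ because $\Lambda(\Gamma)=\Zp\llbracket T_1,T_2\rrbracket$ is an integral domain. Therefore $X_{\underline{I}^c}(T^\ast(1)^\dagger/K_\infty)$ is an extension of a torsion module by a torsion module, hence torsion, and the converse follows by the symmetric argument with the roles of $\alpha$ and $\beta$ (equivalently of $\underline{I}$ and $\underline{I}^c$) interchanged.

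The step I expect to be the main obstacle is the verification that $(A^t,\beta,\underline{I}^c)$ genuinely satisfies the block anti-diagonality hypothesis of Theorem \ref{Selmer functoriality} — concretely, that the dual polarization $\beta$ can be chosen compatibly with the fixed Hodge-compatible symplectic bases, so that a Hodge-compatible symplectic basis for $\beta$ with block anti-diagonal Frobenius matrix exists; once that is in place, everything else is formal.
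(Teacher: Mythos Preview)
Your approach is essentially the paper's: the paper's entire proof is the single sentence ``The homomorphism $\alpha^\vee$ has kernel and cokernel annihilated by $\deg(\alpha)$,'' which encodes exactly the mechanism you spell out (the back map coming from the quasi-inverse isogeny, with both composites equal to multiplication by a fixed nonzero integer, forcing equal $\Lambda(\Gamma)$-ranks). Your flagged concern is not an obstacle: as the paper observes in the proof of Corollary~\ref{simple parts}, the relation $C_{\varphi,\gq}^\ast = \tfrac{1}{p}(C_{\varphi,\gq}^{-1})^t$ forces $C_\gq^\ast$ to be block anti-diagonal whenever $C_\gq$ is, so Theorem~\ref{Selmer functoriality} applies verbatim to the dual isogeny $A^t\to A$ with the dual basis $\mathcal{B}_\gq^\ast$ and index set $\underline{I}^c$---no separate compatibility check is needed.
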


\begin{proof}
The homomorphism $\alpha^\vee$ has kernel and cokernel annihilated by $\deg(\alpha)$.
\end{proof}

\subsection{Simple parts and proof of main result}

\begin{corollary}\label{simple parts}
Keep the same hypotheses as theorem \ref{Selmer functoriality}. If $X_{\underline{I}}(T^\dagger/K_\infty)$ is $\Lambda$-torsion, then $[X_{\underline{I}}(T^\dagger/K_\infty)]_\text{si}=[X_{\underline{I}^c}(T^\ast(1)^\dagger/K_\infty)]_\text{si}$.
\end{corollary}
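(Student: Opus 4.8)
The plan is to feed the $\Lambda(\Gamma)$-homomorphism
$\alpha^\vee\colon X_{\underline{I}^c}(T^\ast(1)^\dagger/K_\infty)\to X_{\underline{I}}(T^\dagger/K_\infty)$
furnished by theorem \ref{Selmer functoriality} (the Pontryagin dual of $\alpha^\ast$ on Selmer groups) into the formalism of characteristic ideals. By corollary \ref{cor:both torsion}, the hypothesis that $X_{\underline{I}}(T^\dagger/K_\infty)$ is $\Lambda$-torsion forces $X_{\underline{I}^c}(T^\ast(1)^\dagger/K_\infty)$ to be torsion as well, so both are finitely generated torsion $\Lambda(\Gamma)$-modules and $[\,\cdot\,]$, $[\,\cdot\,]_\text{si}$ are defined for them. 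Moreover the kernel and cokernel of $\alpha^\vee$ are annihilated by $\deg(\alpha)$, as recorded in the proof of corollary \ref{cor:both torsion}.

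First I would note that $\ker\alpha^\vee$ and $\coker\alpha^\vee$ are finitely generated (submodule, resp. quotient, of finitely generated modules over the Noetherian ring $\Lambda(\Gamma)$), torsion (submodule, resp. quotient, of torsion modules), and killed by a power of $p$: writing $\deg(\alpha)=p^k m$ with $p\nmid m$, the integer $m$ acts invertibly on these $\mathbf{Z}_p$-modules, so $p^k$ annihilates them. The content-bearing observation is then that a finitely generated torsion $\Lambda(\Gamma)$-module $M$ killed by a power of $p$ has characteristic ideal a power of the prime ideal $(p)$: its support lies in the irreducible divisor $V(p)=\Spec(\mathbf{F}_p\llbracket T_1,\ldots,T_d\rrbracket)$, so $(p)$ is the only height-one prime of $\Lambda(\Gamma)$ meeting $\mathrm{Supp}(M)$. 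Since each simple element $f_{\gamma,\zeta}$ is a nonconstant polynomial in $\gamma$, hence not associate to $p$, unique factorization in $\Lambda(\Gamma)$ gives $[\ker\alpha^\vee]_\text{si}=[\coker\alpha^\vee]_\text{si}=0$.

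To finish, I would split $\alpha^\vee$ through its image into the two short exact sequences
$0\to\ker\alpha^\vee\to X_{\underline{I}^c}(T^\ast(1)^\dagger/K_\infty)\to\im\alpha^\vee\to 0$ and
$0\to\im\alpha^\vee\to X_{\underline{I}}(T^\dagger/K_\infty)\to\coker\alpha^\vee\to 0$,
and use the multiplicativity of characteristic ideals in short exact sequences of torsion modules together with unique factorization in $\Lambda(\Gamma)$: collecting only the simple irreducible factors (with multiplicity) and using the vanishing just proved yields
$[X_{\underline{I}}(T^\dagger/K_\infty)]_\text{si}=[X_{\underline{I}^c}(T^\ast(1)^\dagger/K_\infty)]_\text{si}$.
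Equivalently, this last step is exactly the content of \cite[Corollary 4.3.2]{LLTT} applied to $\alpha^\vee$. There is no serious obstacle in this corollary; the only genuinely new input beyond theorem \ref{Selmer functoriality} is the elementary remark that annihilation by a constant kills every simple constituent, the hard work having been done in constructing $\alpha^\vee$ in section \ref{sct:Abelian}.
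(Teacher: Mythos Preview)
Your argument is correct and in fact slightly more economical than the paper's. The paper follows \cite[Corollary 4.3.2]{LLTT} literally: it builds a composite map $\phi\colon [X_{\underline{I}^c}(T^\ast(1)^\dagger)]_\text{si}\to [X_{\underline{I}}(T^\dagger)]_\text{si}$ through $\alpha^\vee$, a pseudo-isomorphism, and a projection, checks that each kernel is killed by something prime to the simple annihilator $f$, and concludes $\phi$ is a pseudo-injection; it then invokes the \emph{dual isogeny} $\alpha^t$ to get a pseudo-injection in the opposite direction and applies \cite[Lemma 2.1.3]{LLTT}. You bypass the second isogeny entirely by observing that $\alpha^\vee$ has both kernel \emph{and cokernel} annihilated by $\deg(\alpha)$, hence by a power of $p$; since $(p)$ is not a simple height-one prime, localizing at any simple prime makes $\alpha^\vee$ an isomorphism, and the equality of $[\,\cdot\,]_\text{si}$ as modules follows. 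One small caution: you phrase the last step via ``multiplicativity of characteristic ideals'' and ``collecting simple factors with multiplicity'', but $[M]_\text{si}$ records the full multiset of exponents $r_i$ at each simple $\xi$, not just their sum; the short-exact-sequence argument as stated only gives equality of characteristic ideals. The fix is immediate---localize at each simple prime, where $\ker$ and $\coker$ vanish, so $(\alpha^\vee)_\mathfrak{p}$ is a genuine isomorphism of $\Lambda_\mathfrak{p}$-modules---and this is presumably what you intend when you defer to \cite[Corollary 4.3.2]{LLTT}. The paper's two-sided approach would be needed if one only controlled the kernel of $\alpha^\vee$; your use of the cokernel bound makes the argument one-sided.
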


\begin{proof}
We translate the proof of \cite[Corollary 4.3.2]{LLTT} in our setting. By corollary \ref{cor:both torsion}, $X_{\underline{I}^c}(T^\ast(1)^\dagger/K_\infty)$ is also torsion. To simplify notation, write $X_{\underline{I}}({-})$ for $X_{\underline{I}}({-}/K_\infty)$. Define $\phi : [X_{\underline{I}^c}(T^\ast(1)^\dagger)]_\text{si} \to [X_{\underline{I}}(T^\dagger)]_\text{si}$ by the composition
$$
[X_{\underline{I}^c}(T^\ast(1)^\dagger)]_\text{si} \hookrightarrow X_{\underline{I}^c}(T^\ast(1)^\dagger) \xrightarrow{\phi_1} X_{\underline{I}}(T^\dagger) \xrightarrow{\phi_2} [X_{\underline{I}}(T^\dagger)] \xrightarrow{\phi_3} [X_{\underline{I}}(T^\dagger)]_\text{si}
$$
where $\phi_1$ is the map $\alpha^\vee$ whose existence follows from theorem \ref{Selmer functoriality}, $\phi_2$ is a pseudo-isomorphism and $\phi_3$ is a surjection. We must show that $\phi$ is a pseudo-injection. Suppose that $[X_{\underline{I}^c}(T^\ast(1)^\dagger)]_\text{si}$ is annihilated by $f \in \Lambda(\Gamma)$ which is a product of simple elements. The kernel of each $\phi_i$ is annihilated by some $g_i \in \Lambda$ relatively prime to $f$. Thus the kernel of $\phi$ is annihilated by both $f$ and $g_1g_2g_3$. We then apply \cite[Lemma 2.1.1]{LLTT} which tells us that a finitely generated $\Lambda(\Gamma)$-module $M$ is pseudo-null if and only if there exists relatively prime $f_1,\ldots,f_k \in \Lambda(\Gamma)$, $k \geq 2$ such that $f_i M=0$ for every $i$. Hence, $\phi$ is a pseudo-injection. \\
The relation $C_{\varphi,\gq}^\ast = \frac{1}{p}(C_{\varphi,\gq}^{-1})^t$ implies that $C_\gq^\ast$ is block anti-diagonal if $C_\gq$ is. Apply theorem \ref{Selmer functoriality}  with $\alpha$ replaced by the dual isogeny $\alpha^t:A^t\to A$, $C_\gq$ replaced by $C_\gq^\ast$ and $\underline{I}$ replaced by $\underline{I}^c$. It follows that we also have a pseudo-injection $[X_{\underline{I}}(T^\dagger)]_\text{si} \to [X_{\underline{I}^c}(T^\ast(1)^\dagger)]_\text{si}$. By \cite[Lemma 2.1.3]{LLTT}, $\phi$ is a pseudo-isomorphism.
\end{proof}

\begin{corollary}
Suppose that $X_{\underline{I}}(T^\dagger/K_\infty)$ is $\Lambda$-torsion. Suppose that there exists a polarization $\alpha:A \to A^t$. For $\gq \in \{\gp,\gp^c\}$, let $\mathcal{B}_\gq$ be a Hodge-compatible basis of $\mathbb{D}_{\cris,\gq}(T)$. Suppose that both matrices $C_\gp$ and $C_{\gp^c}$ are block anti-diagonal. Let $I_\gq$ be either the set $\{1,2,\ldots,g\}$ or $\{g+1,g+2,\ldots,2g\}$. Then,
$$
\Sel_{\underline{I}}(A[p^\infty]/K_\infty)^\vee \sim \Sel_{\underline{I}^c}(A^t[p^\infty]/K_\infty)^{\vee,\iota}.
$$
\end{corollary}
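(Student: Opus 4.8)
The plan is to deduce this corollary formally from the two halves proved above, namely Theorem~\ref{non-simple parts} for the non-simple part and Corollary~\ref{simple parts} for the simple part, together with the structure theory of $\Gamma$-systems recalled in Section~\ref{gamma-systems}. First I would check that everything needed is in place. For each $\gq \in \{\gp,\gp^c\}$, Lemma~\ref{SHT basis} lets me take $\mathcal{B}_\gq$ to be a Hodge-compatible \emph{symplectic} basis of $\mathbb{D}_{\cris,\gq}(T)$, and by hypothesis the Frobenius matrix $C_\gq$ is block anti-diagonal with respect to it, so the hypotheses of Theorem~\ref{Selmer functoriality}, hence of Corollary~\ref{simple parts}, are met (with each $I_\gq$ equal to $\{1,\dots,g\}$ or $\{g+1,\dots,2g\}$). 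The standing hypotheses also hold: \textbf{(H.crys)} because $A$ has good reduction above $p$; \textbf{(H.HT)}, \textbf{(H.Frob)}, \textbf{(H.P)} as recorded at the start of Section~\ref{sct:Abelian}; \textbf{(Tors)} by supersingularity; \textbf{(H.F)} by Imai's theorem; and \textbf{(Ram)} is precisely the assumption $p \nmid h_K$. Since $X_{\underline I}(T^\dagger/K_\infty)$ is assumed $\Lambda(\Gamma)$-torsion, Corollary~\ref{cor:both torsion} (applied through the comparison map $\alpha^\vee$, whose kernel and cokernel are killed by $\deg \alpha$) shows that $X_{\underline I^c}(T^\ast(1)^\dagger/K_\infty)$ is torsion as well, so the torsion requirements of both Theorem~\ref{non-simple parts} and Corollary~\ref{simple parts} are satisfied.

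Next I would assemble the pieces. Corollary~\ref{simple parts} gives $[X_{\underline I}(T^\dagger/K_\infty)]_\text{si} = [X_{\underline I^c}(T^\ast(1)^\dagger/K_\infty)]_\text{si}$, and Lemma~\ref{involution simple} rewrites this as $[X_{\underline I}(T^\dagger/K_\infty)]_\text{si}^\iota = [X_{\underline I^c}(T^\ast(1)^\dagger/K_\infty)]_\text{si}$; Theorem~\ref{non-simple parts} gives the complementary identity $[X_{\underline I}(T^\dagger/K_\infty)]_\text{ns} = [X_{\underline I^c}(T^\ast(1)^\dagger/K_\infty)]_\text{ns}^\iota$. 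Adding these two and invoking the direct sum decomposition $[N] = [N]_\text{si} \oplus [N]_\text{ns}$ of Section~\ref{gamma-systems} yields $[X_{\underline I}(T^\dagger/K_\infty)]^\iota = [X_{\underline I^c}(T^\ast(1)^\dagger/K_\infty)]$, which is the sought algebraic functional equation $X_{\underline I}(T^\dagger/K_\infty)^\iota \sim X_{\underline I^c}(T^\ast(1)^\dagger/K_\infty)$.

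Finally I would translate back to abelian varieties. Since $T = T_p(A)$ we have $T^\dagger = A^t[p^\infty]$ and $T^\ast(1)^\dagger = A[p^\infty]$, so the equivalence just obtained reads $\Sel_{\underline I}(A^t[p^\infty]/K_\infty)^{\vee,\iota} \sim \Sel_{\underline I^c}(A[p^\infty]/K_\infty)^\vee$. To get the statement in the form written, I would rerun the entire argument with the pair $(A,\alpha)$ replaced by $(A^t,\alpha^t)$; this is legitimate because $A^t$ is again a polarized abelian variety with supersingular reduction at both primes above $p$, and the relation $C_{\varphi,\gq}^\ast = \frac{1}{p}(C_{\varphi,\gq}^{-1})^t$ forces $C_\gq^\ast$ to be block anti-diagonal whenever $C_\gq$ is, exactly as observed in the proof of Corollary~\ref{simple parts}. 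This produces $\Sel_{\underline I}(A[p^\infty]/K_\infty)^\vee \sim \Sel_{\underline I^c}(A^t[p^\infty]/K_\infty)^{\vee,\iota}$, as claimed. I do not expect a genuinely new obstacle in this last step: all the hard work---the behaviour of symplectic Hodge-compatible bases under the polarization (Lemma~\ref{SHT basis}, Proposition~\ref{Fontaine}, Proposition~\ref{coleman functorial}), the construction of the comparison map $\alpha^\vee$ (Theorem~\ref{Selmer functoriality}), and the $\Gamma$-system input---is already done. The one thing to be careful about is the duality bookkeeping: tracking which index set ($\underline I$ versus $\underline I^c$) and which lattice ($T$ versus $T^\ast(1)$, equivalently $A$ versus $A^t$) belongs on each side, and making sure the torsion hypothesis is transported to whichever Selmer groups actually appear in the final identity.
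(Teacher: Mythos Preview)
Your overall strategy---combining Theorem~\ref{non-simple parts} and Corollary~\ref{simple parts} via the decomposition $[N]=[N]_\text{si}\oplus[N]_\text{ns}$---matches the paper's, but there is a genuine gap in the first paragraph. The statement of the corollary gives you an \emph{arbitrary} Hodge-compatible basis $\mathcal{B}_\gq$ for which $C_\gq$ is block anti-diagonal; it does not say this basis is symplectic. When you write ``Lemma~\ref{SHT basis} lets me take $\mathcal{B}_\gq$ to be a Hodge-compatible \emph{symplectic} basis \ldots\ and by hypothesis the Frobenius matrix $C_\gq$ is block anti-diagonal with respect to it'', you are silently replacing the given basis by a new one and then invoking a hypothesis that was only stated for the original. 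Nothing in the paper guarantees that the block anti-diagonal condition is preserved under change of Hodge-compatible basis, so this step is unjustified; yet Corollary~\ref{simple parts} (through Theorem~\ref{Selmer functoriality} and Proposition~\ref{coleman functorial}) genuinely requires the basis to be both symplectic \emph{and} to have block anti-diagonal $C_\gq$.

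The paper closes this gap differently: it first proves the result assuming $\mathcal{B}_\gq$ is symplectic, and then invokes \cite[Proposition~4.17]{DR21}, which says that under the anti-diagonal hypothesis the Selmer groups $\Sel_{\underline{I}}$ and $\Sel_{\underline{I}^c}$ with $I_\gq\in\{\{1,\dots,g\},\{g+1,\dots,2g\}\}$ are independent of the choice of Hodge-compatible basis. This lets one transport the functional equation from the symplectic basis (where the machinery applies) to the given one. Your write-up needs exactly this ingredient; without it the passage from ``some Hodge-compatible basis with anti-diagonal $C_\gq$'' to ``a symplectic Hodge-compatible basis with anti-diagonal $C_\gq$'' is not justified. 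Your final paragraph, rerunning with $(A^t,\alpha^t)$ to straighten out which of $A$ and $A^t$ appears on which side, is more explicit than the paper (which simply says ``the result follows by putting together Theorem~\ref{non-simple parts} and Corollary~\ref{simple parts}''), and your caution about the duality bookkeeping is warranted; but note that once both dual Selmer groups are known to be torsion, a simple relabelling $\underline{I}\leftrightarrow\underline{I}^c$ achieves the same thing without redoing the argument.
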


\begin{proof}
Suppose first that $\mathcal{B}_{\gq}$ is a Hodge-compatible symplectic basis whose existence is guaranteed by lemma \ref{SHT basis}. Then, the result follow by putting together theorem \ref{non-simple parts} and corollary \ref{simple parts}. Furthermore, the hypotheses imply that $\Sel_{\underline{I}}(A[p^\infty]/K_\infty)$ and $\Sel_{\underline{I}^c}(A^t[p^\infty]/K_\infty)$ do not depend on the choice of Hodge-compatible bases \cite[Proposition 4.17]{DR21}. Since the functional equation holds for a Hodge-compatible symplectic basis, it holds for all Hodge-compatible bases.
\end{proof}

\begin{remark}
The discussion of \cite[Section 3.3]{LP20} shows that the hypothesis on $C_\gq$ is satisfied for a certain class of abelian varieties of $\mathrm{GL}_2$-type.
\end{remark}

\section{Chromatic Selmer groups}\label{chromatic}

In this last section, we investigate the case when $T=T_p(E)$ is the $p$-adic Tate module of a supersingular elliptic curve. In this setting, Selmer groups were constructed by Kobayashi \cite{Kob03} (when $a_p=0$) and generalized by Sprung \cite{Spr12} (when $p|a_p$). The Selmer groups constructed by Sprung are the so-called \textit{chromatic Selmer groups}. The strategy used to prove the functional equation for chromatic Selmer groups over $\Zp^2$-extensions is to compare chromatic Coleman maps to multi-signed Coleman maps. This will enable us to use the reciprocity formula of Loeffler--Zerbes in order to deduce the required orthogonality conditions. 

\subsection{Chromatic Selmer Selmer groups over \texorpdfstring{$\Zp^2$-extensions}{Zp2-extensions}}

We first review the construction of two-variable chromatic Selmer groups as in \cite{Spr16}. Recall that we write $F_m$ for the unique subextension of degree $p^m$ contained in the unramified $\Zp$-extension of $\Qp$. We also write $F_m^n$ for the unique subextension of degree $p^n$ inside $F_m(\mu_{p^n})$. Denote by $\mathfrak{m}_{n,m}$ the maximal ideal in $\mathcal{O}_{F_m^n}$. Let $\Lambda_{n,m} \colonequals \Zp[\Gal (F_m^n/\Qp)]$.

\begin{definition}
For $x \in H^1(F_m^n,T)$, we define a paring $P_{(n,m),x}:H^1(F_m^n,T)\to \Lambda_{n,m}$ by
$$
z \mapsto \sum_{\sigma \in \Gal(F_m^n/\Qp)}(x^\sigma,z)_{n,m}\cdot \sigma
$$
where $(\sim,\sim )$ is the pairing coming from the cup product induced by the Weil pairing.
\end{definition}

Let $\mathcal{A}_i \colonequals \begin{bmatrix} a_p & \Phi_i(1+X) \\ -1 & 0 \end{bmatrix}$.
\begin{definition}
We put $\mathcal{H}_n \colonequals \mathcal{A}_1 \cdots \mathcal{A}_n$ and define the endomorphism $h_{n,m}:\Lambda_{n,m}^{\oplus 2}\to \Lambda_{n,m}^{\oplus 2}$ by
$$
(a,b) \mapsto (a,b)\mathcal{H}_n.
$$
\end{definition}

\begin{remark}
We use the $\mathcal{H}_n$ as defined in \cite{Spr12} which differ slightly from the one in \cite{Spr16}.
\end{remark}

Recall that $F_\infty$ denotes the unramified $\Zp$-extension of $\Qp$. Put $U \colonequals \Gal (F_\infty / \Qp)$. Fix $(d_m)_m \in \varprojlim_m \mathcal{O}_{F_m}^\times$ a $\Lambda(U)$-basis of $\varprojlim_m \mathcal{O}_{F_m}$ as in \cite[Lemma 2.2]{Wan14}. Then, the arguments in \cite[Proposition 2.15]{Spr16} gives the following proposition:
\begin{proposition}\label{prop:2-variable chromatic}
There exists a unique homomorphism $\col_{n,m}:H^1(F_m^n,T) \to \frac{\Lambda_{n,m}^{\oplus 2}}{\ker h_{n,m}}$ such that $( P_{(n,m),c_{n,m}}, P_{(n,m),c_{n-1,m}}) = h_{n,m}\circ \col_{n,m}$.
\end{proposition}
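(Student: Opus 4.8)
The plan is to mimic the construction of the multi-signed Coleman maps (Proposition~\ref{prop:2-variable chromatic} here, \cite[Proposition 2.15]{Spr16}) but at finite level over $F_m^n$, keeping track of the extra variable coming from the unramified direction. First I would fix notation: let $c_{n,m} \in H^1(F_m^n,T)$ be the image of a norm-compatible system (the analogue of Kato's zeta elements or simply a suitable cohomology class as in \cite{Spr16}), so that $P_{(n,m),c_{n,m}}$ and $P_{(n,m),c_{n-1,m}}$ make sense as maps $H^1(F_m^n,T)\to\Lambda_{n,m}$. The key structural input is that these pairings are compatible as $n$ varies: under corestriction/the transition maps in the cyclotomic tower, $P_{(n,m),c_{n,m}}$ and $P_{(n,m),c_{n-1,m}}$ satisfy a recursion governed by the matrices $\mathcal{A}_i = \begin{bmatrix} a_p & \Phi_i(1+X) \\ -1 & 0 \end{bmatrix}$, exactly as in Sprung's one-variable construction. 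This is the heart of the matter and follows from the explicit description of the Coleman/Perrin-Riou machinery and the behaviour of $\Phi_{p^n}(1+X)$ under the trace maps.

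Next I would carry out the induction on $n$. For the base case one checks directly that $(P_{(1,m),c_{1,m}}, P_{(1,m),c_{0,m}})$ factors through $h_{1,m}$ — i.e., lies in the image of multiplication by $\mathcal{A}_1$ modulo $\ker h_{1,m}$ — which is a small linear-algebra computation using the definition of $\mathcal{A}_1$ and the relation between $c_{1,m}$ and $c_{0,m}$. For the inductive step, suppose $\col_{n-1,m}$ has been constructed; one uses the recursion $(P_{(n,m),c_{n,m}}, P_{(n,m),c_{n-1,m}}) = (P_{(n,m),c_{n-1,m}}, P_{(n,m),c_{n-2,m}})$-lifted-times-$\mathcal{A}_n$, together with the fact that $\mathcal{H}_n = \mathcal{A}_1\cdots\mathcal{A}_n$, to define $\col_{n,m}$ so that $h_{n,m}\circ\col_{n,m} = (P_{(n,m),c_{n,m}}, P_{(n,m),c_{n-1,m}})$. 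Uniqueness is immediate: any two such homomorphisms differ by a map into $\ker h_{n,m}$, but the target is precisely $\Lambda_{n,m}^{\oplus 2}/\ker h_{n,m}$, so the difference vanishes. The only thing to verify carefully is well-definedness, namely that the right-hand side $(P_{(n,m),c_{n,m}}, P_{(n,m),c_{n-1,m}})$ actually lands in the image of $h_{n,m}$; this is where the determinant computation $\det\mathcal{H}_n = \prod_i \Phi_i(1+X)$ (up to units) and the divisibility properties of the pairings enter, paralleling \cite[Proposition 2.15]{Spr16}.

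The main obstacle I anticipate is handling the unramified direction cleanly: over $F_m^n$ rather than $\Qp^n$, one must ensure that the pairing $P_{(n,m),x}$, built from the cup product and Weil pairing with values in $\Lambda_{n,m} = \Zp[\Gal(F_m^n/\Qp)]$, interacts correctly with the basis $(d_m)_m \in \varprojlim_m \mathcal{O}_{F_m}^\times$ and the Yager-type normalization. Concretely, the recursion matrices $\mathcal{A}_i$ are defined over $\Zp[[X]]$ (they do not see the unramified variable), so the compatibility in the $n$-direction is formally identical to Sprung's; the subtlety is only that all identities must be $\Lambda_{n,m}$-linear, i.e. equivariant for the full Galois group $\Gal(F_m^n/\Qp)$ and not merely for $\Gal(F_m^n/F_m)$. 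Once one observes — as in \cite[Proposition 2.15]{Spr16} and the analogous remark after \cite[Corollary 2.14]{BL17} — that the relevant operators commute with the $\Gal(F_m/\Qp)$-action because $a_p$ and the cyclotomic polynomials are defined over $\Qp$, this obstacle dissolves and the argument of \emph{loc.\ cit.} goes through verbatim, which is why I would phrase the proof as "the arguments in \cite[Proposition 2.15]{Spr16} apply after replacing $\Lambda_n$ by $\Lambda_{n,m}$ throughout."
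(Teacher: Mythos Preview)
Your proposal is correct and matches the paper's approach: the paper does not give an independent argument but simply states that the proposition follows from the arguments in \cite[Proposition 2.15]{Spr16}, which is precisely the conclusion you reach in your final sentence. Your elaboration of the inductive mechanism and the observation that the unramified variable plays no role (since the $\mathcal{A}_i$ are defined over $\Zp[[X]]$) is a faithful unpacking of what ``the arguments in \cite[Proposition 2.15]{Spr16} apply after replacing $\Lambda_n$ by $\Lambda_{n,m}$'' actually entails.
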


Here, the system of points $c_{(n,m)}$ (denoted by $c_{(n,d_m)}$ in \cite{Spr16}) are trace-compatible points in the formal group $\widehat{E}(\mathfrak{m}_{n,m})$ constructed in \cite[Lemma 2.10]{Spr16}. We will not need their precise construction. To record one key property of $c_{(n,m)}$, we will need to introduce some notation. Let $\gamma$ (resp. $u$) be a generator of $\Gamma^\text{cyc}_{\Qp}$ (resp. $U$). We identify $\Gal(k_\infty/\Qp)$ with $\Gamma^\text{cyc}_{\Qp} \times U$.

\begin{lemma}
Let $\log_{\widehat{E}}$ be the formal logarithm on $\widehat{E}$ and let $\theta$ be a finite order character of the Galois group $\Gal(k_\infty/\Qp)$ such that $\theta(\gamma)$ is a primitive $p^n$th root of unity and $\theta(u)$ is a primitive $p^m$th root of unity. Then,
$$
\sum_{\sigma \in \Gal(F_m^n/\Qp)} \log_{\widehat{E}}(c_{(n,m)})^\sigma \theta(\sigma) = \tau(\theta\lvert_{G_n})\theta(u)^{n+1} \cdot \sum_{v \in \Gal(F_m/\Qp)} \theta (v)d_m^v
$$
where $\tau(\theta\lvert_{G_n})$ is the Gauss sum of $\theta$ restricted to $G_n$.
\end{lemma}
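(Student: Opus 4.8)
The plan is to unwind the construction of the trace-compatible system $(c_{(n,m)})$ from \cite[Lemma 2.10]{Spr16} and reduce the identity to the interpolation property of the underlying Coleman power series together with a standard Gauss-sum evaluation. Recall that $c_{(n,m)}\in\widehat{E}(\mathfrak{m}_{n,m})$ is produced from the unit basis $(d_m)_m$ of $\varprojlim_m\mathcal{O}_{F_m}$ by applying the (inverse of the) Coleman isomorphism in the ramified direction and the Yager-module normalization in the unramified direction; the whole point of the construction is that this is the normalization making Proposition \ref{prop:2-variable chromatic} hold. Thus there is a Coleman power series $g_m(X)\in\mathcal{O}_{F_m}\llbracket X\rrbracket$ built from $d_m$, a Frobenius twist $\varphi^{n+1}$, and an identification of $c_{(n,m)}$, at level $(n,m)$, with the value of $g_m$ at $\zeta_{p^{n+1}}-1$ up to that twist.

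First I would record that, since $\widehat{E}$ is defined over $\Zp$, the formal logarithm $\log_{\widehat{E}}$ is $\Gal(F_m^n/\Qp)$-equivariant, so the left-hand side equals $\sum_\sigma\theta(\sigma)\,\sigma\!\left(\log_{\widehat{E}}(c_{(n,m)})\right)$. Next I would factor $\Gal(F_m^n/\Qp)\cong\Gal(F_m^n/F_m)\times\Gal(F_m/\Qp)$ and push the sum over the unramified quotient $\Gal(F_m/\Qp)$ to the outside: by the defining property of the basis $(d_m)$ this contributes precisely the factor $\sum_{v\in\Gal(F_m/\Qp)}\theta(v)d_m^v$ appearing on the right. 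What remains is the sum over the cyclotomic direction $\Gal(F_m^n/F_m)$, which can be organized as an induction on $n$ using the recursion $\mathcal{H}_n=\mathcal{A}_1\cdots\mathcal{A}_n$, or handled directly: expressing $\log_{\widehat{E}}(c_{(n,m)})$ through $g_m$ and using the Gauss-sum identity $\sum_{\sigma\in\Gal(\Qp(\mu_{p^{n+1}})/\Qp)}\zeta_{p^{n+1}}^{\sigma}\,\theta|_{G_n}(\sigma)=\tau(\theta|_{G_n})$, the twisted sum collapses the Mellin/Amice transform to its value at $\theta$, producing $\tau(\theta|_{G_n})$, while the Frobenius twist $\varphi^{n+1}$ built into $c_{(n,m)}$ produces $\theta(u)^{n+1}$ since $\varphi$ is the arithmetic Frobenius, identified with the generator $u$ of $U$.

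I expect the main obstacle to be the bookkeeping of normalizations: pinning down the exact definition of $c_{(n,m)}$ from \cite{Spr16} (whose conventions differ slightly from those of \cite{Spr12}, as noted in the remark above), confirming that the accumulated Frobenius twists give exactly $\theta(u)^{n+1}$ and not a shifted power, and checking that no extraneous unit from the period rings or from the Coleman isomorphism survives the character sum. Once these normalizations are fixed, the identity follows by assembling the three ingredients — $\Gal$-equivariance of $\log_{\widehat{E}}$, the unramified factorization through $(d_m)$, and the cyclotomic Gauss-sum collapse — exactly as in the one-variable computations of \cite[Lemma 2.10]{Spr16} and in Kobayashi's original argument in the $a_p=0$ case, now carried out over the product tower $k_\infty$.
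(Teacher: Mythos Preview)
Your outline is the natural reconstruction of the argument, and the three ingredients you isolate --- Galois-equivariance of $\log_{\widehat{E}}$, the factorization through the unramified quotient picking up $\sum_v \theta(v) d_m^v$, and the cyclotomic Gauss-sum collapse together with the Frobenius twist $\varphi^{n+1}$ yielding $\theta(u)^{n+1}$ --- are exactly the right ones. The paper, however, does not reprove this: it simply cites the proof of \cite[Proposition 3.7]{Spr16}, noting only that the exponent $n+1$ on $\theta(u)$ is omitted there. So your approach is not different in substance from the paper's, but rather an unpacking of the cited argument; the bookkeeping you flag as the main obstacle is precisely what Sprung carries out in that proof, and the paper is content to defer to it.
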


\begin{proof}
This can be found in the proof of \cite[Proposition 3.7]{Spr16}. Note that the exponent $n+1$ of $\theta(u)$ is missing in \textit{loc. cit.}
\end{proof}

Let $\exp^*: H^1(F_m^n,T\otimes \Qp) \to \mathrm{cotan}(E/F_m^n)$ be the dual exponential map. Let $\omega_E$ be the invariant differential of $E$. Then, $\mathrm{cotan}(E/F_m^n)$ is one dimensional with basis $\{ \omega_E \}$. Define $\exp_{\omega_E}^*(y)$ by the relation $\exp^*(y) = \exp_{\omega_E}^*(y) \omega_E$.

\begin{lemma}\label{lm:interpolation K-pairing}
Let $\theta$ be a finite order character of $\Gal(k_\infty/\Qp)$ such that $\theta(\gamma)$ is a primitive $p^n$th root of unity and $\theta(u)$ is a primitive $p^m$th root of unity. Then, $\begin{bmatrix} P_{(n,m),c_{n,m}}(z)(\theta) \\ P_{(n,m),c_{n-1,m}}(z)(\theta) \end{bmatrix}$ equals to
$$
\begin{bmatrix}\left( \tau(\theta\lvert_{G_n})\theta(u)^{n+1} \cdot \sum_{v \in \Gal(F_m/\Qp)} \theta (v)d_m^v\right) \left( \sum_{\sigma \in \Gal(F_m^n/\Qp)}\exp^\ast_{\omega_E}(z^\sigma)\theta(\sigma)^{-1}\right) \\ 0  \end{bmatrix}.
$$
\end{lemma}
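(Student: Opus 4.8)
The plan is to unwind the definition of the pairing $P_{(n,m),x}$, substitute the explicit formula for the local cup product in terms of the formal logarithm and the dual exponential map, and then reorganize the resulting double sum over $\Gal(F_m^n/\Qp)$ so that the preceding lemma applies directly.

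First, evaluating at the character $\theta$ turns the left-hand side into
\[
P_{(n,m),x}(z)(\theta) \;=\; \sum_{\sigma \in \Gal(F_m^n/\Qp)} (x^\sigma, z)_{n,m}\,\theta(\sigma),
\]
for $x \in \{c_{n,m},\, c_{n-1,m}\}$. The essential input is the compatibility of the cup product $(\sim,\sim)_{n,m}$ with the Bloch--Kato exponential: for $y$ in the Kummer image of $\widehat{E}(\mathfrak{m}_{n,m})$ and any class $z$ one has $(y,z)_{n,m} = \mathrm{Tr}_{F_m^n/\Qp}\!\big(\log_{\widehat{E}}(y)\cdot \exp^*_{\omega_E}(z)\big)$, with the normalization fixed by the choice of $\omega_E$ (cf.\ \cite{Kob03}, \cite{Spr16}). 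Applying this with $y = c_{n,m}^\sigma$, expanding the trace as $\mathrm{Tr}_{F_m^n/\Qp}(a) = \sum_{\tau}a^\tau$, and using that $\log_{\widehat{E}}$ and $\exp^*_{\omega_E}$ commute with the Galois action (because $\widehat{E}$ and $\omega_E$ are defined over $\Qp$), the substitution $\rho = \sigma\tau$ separates the two summation variables and yields
\[
P_{(n,m),c_{n,m}}(z)(\theta) = \Bigg( \sum_{\rho \in \Gal(F_m^n/\Qp)} \log_{\widehat{E}}(c_{n,m})^\rho\,\theta(\rho) \Bigg)\Bigg( \sum_{\sigma \in \Gal(F_m^n/\Qp)} \exp^*_{\omega_E}(z^\sigma)\,\theta(\sigma)^{-1} \Bigg).
\]
The first factor is precisely the quantity computed in the preceding lemma, equal to $\tau(\theta|_{G_n})\,\theta(u)^{n+1}\sum_{v \in \Gal(F_m/\Qp)}\theta(v)d_m^v$; this produces the top entry.

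For the bottom entry the same computation applies verbatim with $c_{n,m}$ replaced by $c_{n-1,m}$, so it suffices to see that $\sum_{\rho}\log_{\widehat{E}}(c_{n-1,m})^\rho\,\theta(\rho) = 0$. Since $c_{n-1,m} \in \widehat{E}(\mathfrak{m}_{n-1,m})$ is fixed by $H \colonequals \Gal(F_m^n/F_m^{n-1})$, grouping the sum over the cosets of $H$ factors out $\sum_{h \in H}\theta(h)$; and because $\theta(\gamma)$ has exact order $p^n$, the restriction $\theta|_H$ is the order-$p$ character $h \mapsto \theta(\gamma)^{p^{n-1}}$, hence nontrivial, so $\sum_{h \in H}\theta(h) = 0$. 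This gives the second coordinate and completes the argument.

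The only non-formal ingredient here is the cup-product identity $(y,z)_{n,m} = \mathrm{Tr}_{F_m^n/\Qp}(\log_{\widehat{E}}(y)\,\exp^*_{\omega_E}(z))$ for Kummer classes $y$: pinning it down with the correct sign and the precise normalization induced by $\omega_E$, and checking it is the same cup product used to define $P_{(n,m),x}$, is where the actual content lies — everything after that is bookkeeping with characters of $\Gal(F_m^n/\Qp)$.
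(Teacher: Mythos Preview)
Your proof is correct and follows essentially the same route as the paper: both reduce to the factorization $P_{(n,m),x}(z) = \big(\sum_\sigma \log_{\widehat{E}}(x^\sigma)\sigma\big)\big(\sum_\sigma \exp^\ast_{\omega_E}(z^\sigma)\sigma^{-1}\big)$ and then invoke the preceding lemma for the top entry. For the bottom entry the paper phrases the vanishing as the vanishing of the Gauss sum $\sum_{\sigma\in G_n}\zeta_{p^n}^\sigma\theta(\sigma)$ (since $c_{n-1,m}$ is built from $p^n$th roots of unity), whereas you argue via the nontriviality of $\theta$ on $\Gal(F_m^n/F_m^{n-1})$; these are two ways of saying the same thing.
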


\begin{proof}
This follows from the identity
$$
P_{(n,m),x}(z) = \left( \sum_{\sigma \in \Gal(F_m^n/\Qp)} \log_{\widehat{E}}(x^\sigma)\sigma \right)\left(  \sum_{\sigma \in \Gal(F_m^n/\Qp)} \exp^\ast_{\omega_E}(z^\sigma)\sigma^{-1}\right)
$$
and the fact that $c_{n-1,m}$ is defined with $p^{n}$th root of unity making the "Gauss sum" $\sum_{\sigma \in G_n}\zeta_{p^{n}}^\sigma \theta(\sigma)$ equals to $0$.
\end{proof}

Let $A \colonequals \begin{bmatrix} a_p & p \\ -1 & 0 \end{bmatrix}$ and put $\mathcal{H} \colonequals \lim_{n\to \infty}\mathcal{H}_nA^{-n-1}$. The entries of $\mathcal{H}$ converge on the open unit disc of $\mathbf{C}_p$ and are $O(\log_p (1+X)^{\frac{1}{2}})$ \cite[Lemma 4.4 and Lemma 4.8]{Spr12}. By \cite[Proposition 2.16 and Proposition 2.17]{Spr16}, the Coleman maps $\col_{n,m}$ are compatible in the unramified and totally ramified directions allowing one to assemble them into a map 
$$
(\col^\sharp,\col^\flat) \colonequals \varprojlim_n \varprojlim_m H^1(F_m^n,T) \to \varprojlim_n \varprojlim_m \frac{\Lambda_{n,m}^{\oplus 2}}{\ker h_{n,m}}.
$$
We denote by $\col_{chr}$ this vector of Coleman maps. By \cite[Lemma 2.18]{Spr16}, 
$$
\varprojlim_n \varprojlim_m \frac{\Lambda_{n,m}^{\oplus 2}}{\ker h_{n,m}} \cong \Lambda(\Gamma)^{\oplus 2}.
$$
Thus $\col_{chr}$ takes value in $\Lambda(\Gamma)^{\oplus 2}$. Moreover, $$\left( \col_{chr}(z) \mathcal{H}\right) (\theta) = (P_{(n,m),c_{n,m}}(z)(\theta), P_{(n,m),c_{n-1,m}}(z)(\theta)) A^{-n-1}$$ by proposition \ref{prop:2-variable chromatic}. In order to compare with the construction of multi-signed Coleman map, it will be easier to transpose every object in Sprung's construction. Thus we find
$$
\left( \mathcal{H}^t \col_{chr}^t(z)\right)(\theta) = (A^t)^{-n-1}(P_{(n,m),c_{n,m}}(z)(\theta), P_{(n,m),c_{n-1,m}}(z)(\theta))^t.
$$

We note that $A^t = \begin{bmatrix}a_p & p \\ -1 & 0 \end{bmatrix}$ and $\mathcal{A}_i^t = \begin{bmatrix} a_p & \Phi_i(1+X) \\ -1 & 0 \end{bmatrix}$.

\subsection{Comparison with the multi-signed theory}

The goal is to find a basis of $\Dcris(T)$ such that the matrix $M_T$ constructed in section \ref{definition} agrees with $\mathcal{H}$. Since $T=T^\ast(1)$, we identify $\Dcris(T)$ with $\Dcris(T^\ast(1))$. Let $\{\omega\}$ be a basis for $\Fil^0 \Dcris(T)$ such that $\omega$ corresponds to the Néron differential $\omega_E$ via the isomorphism
$$
\Fil^0\Dcris(V) \cong \Fil^0\mathbb{D}_{\mathrm{dR}}(V)\cong \mathrm{cotan}(E/\Qp).
$$
Let
$$
\exp^\ast_{F_m^n,T}: H^1(F_m^n,T^\ast(1)) \to \Fil^0\Dcris(T^\ast(1))
$$
be Bloch--Kato dual exponential map. Define $\exp^\ast_{F_m^n,\omega}$ by the relation $\exp^\ast_{F_m^n,T}(z) = \exp^\ast_{F_m^n,\omega}(z)\cdot \omega$. By \cite[Example 3.11]{BK90}, the Bloch--Kato exponential map coincides with the classical exponential map. Consider the basis $\{\omega, -\varphi(\omega) \}$ of $\Dcris(T)$. The operator $\varphi$ satisfies the polynomial $X^2-\frac{a_p}{p}X + \frac{1}{p}$, hence the matrix of $\varphi$ with respect to this basis is 
$$
C_\varphi = \begin{bmatrix} 0 & 1/p \\ -1 & a_p/p \end{bmatrix}.
$$
We denote by $\alpha$ and $\beta$ the roots of the above polynomial. Following the construction of section \ref{definition}, we find the matrices 
$$
C_n = \begin{bmatrix} a_p & -1 \\ \Phi_n(1+X) & 0 \end{bmatrix}.
$$
Remark that $C_n = \mathcal{A}_n^t$ and $C_\varphi = (A^t)^{-1}$.

\begin{lemma}\label{dual basis}
Let $\delta:= [\omega, \varphi(\omega)]$. The basis $\{ \varphi(\omega)/\delta, \omega/\delta \}$ is the dual basis of $\{\omega,-\varphi(\omega) \}$.
\end{lemma}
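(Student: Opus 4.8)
The plan is to verify directly that the pairing $[\cdot,\cdot]$ evaluated on the proposed pair of bases yields the identity matrix. Concretely, writing $\mathcal{B} = \{\omega, -\varphi(\omega)\}$ for the basis of $\Dcris(T)$ fixed above and $\mathcal{B}^\vee = \{\varphi(\omega)/\delta, \omega/\delta\}$ for the candidate dual basis, with $\delta := [\omega,\varphi(\omega)]$, I would show
\[
[\omega, \varphi(\omega)/\delta] = 1, \qquad [\omega, \omega/\delta] = 0, \qquad [-\varphi(\omega), \varphi(\omega)/\delta] = 0, \qquad [-\varphi(\omega), \omega/\delta] = 1.
\]
The first and third are immediate from $\mathbf{Z}_p$-bilinearity and the definition of $\delta$: $[\omega,\varphi(\omega)/\delta] = \delta^{-1}[\omega,\varphi(\omega)] = 1$, and $[-\varphi(\omega),\varphi(\omega)/\delta] = -\delta^{-1}[\varphi(\omega),\varphi(\omega)] = 0$ since the pairing is alternating (or at least $[x,x]=0$; if only skew-symmetry is available one still gets $0$ for $p$ odd). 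The second identity is similarly $[\omega,\omega/\delta] = \delta^{-1}[\omega,\omega] = 0$. The only identity requiring a small computation is the last one, $[-\varphi(\omega),\omega/\delta] = -\delta^{-1}[\varphi(\omega),\omega] = \delta^{-1}[\omega,\varphi(\omega)] = 1$, where I use the skew-symmetry $[\varphi(\omega),\omega] = -[\omega,\varphi(\omega)]$ of the crystalline pairing on $\Dcris(T)\times\Dcris(T^\ast(1)) = \Dcris(T)\times\Dcris(T)$ coming from the Weil pairing on the Tate module of the elliptic curve (which is alternating).

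For this argument to be meaningful I must first check that $\{\omega,\varphi(\omega)\}$ is actually a $\mathbf{Z}_p$-basis of $\Dcris(T)$ and that $\delta \in \mathbf{Z}_p^\times$, so that division by $\delta$ makes sense inside the integral Dieudonné module. The first point follows because $\Fil^0\Dcris(T) = \mathbf{Z}_p\cdot\omega$ is a rank-one direct summand (hypothesis \textbf{(H.P)} with $g=1$), and $\varphi(\omega)$ is not in $\Fil^0$: indeed from the matrix $C_\varphi = \begin{bmatrix} 0 & 1/p \\ -1 & a_p/p \end{bmatrix}$ displayed above we read off $\varphi(\omega) = -(-\varphi(\omega))$ has a nonzero component along the second basis vector, so $\{\omega,-\varphi(\omega)\}$ — equivalently $\{\omega,\varphi(\omega)\}$ — spans. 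For $\delta\in\mathbf{Z}_p^\times$: the crystalline pairing $\Dcris(T)\times\Dcris(T^\ast(1))\to\Dcris(\mathbf{Z}_p(1))\cong\mathbf{Z}_p$ is perfect, so its Gram matrix in any pair of $\mathbf{Z}_p$-bases is in $\GL_2(\mathbf{Z}_p)$; applying this to $\mathcal{B}$ against itself (using $T=T^\ast(1)$) and using that $\Fil^0$ is isotropic (so $[\omega,\omega]=0$, matching the Lagrangian property exploited in Lemma \ref{SHT basis}), the Gram matrix is anti-diagonal with off-diagonal entry $\pm\delta$, forcing $\delta\in\mathbf{Z}_p^\times$.

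I do not expect any serious obstacle here; the lemma is essentially a bookkeeping step recording how the dual basis is expressed, to be used when matching Sprung's matrix $\mathcal{H}$ with the multi-signed logarithmic matrix $M_T$. The only mild subtlety is being careful about the sign conventions: the pairing is skew-symmetric (alternating, since it comes from the Weil pairing), $p$ is odd, and the Frobenius satisfies $X^2 - \tfrac{a_p}{p}X + \tfrac1p$, so all signs are pinned down and the computation above closes. I would present it as the four bilinearity evaluations displayed above, preceded by the one-line remarks that $\{\omega,\varphi(\omega)\}$ is a $\mathbf{Z}_p$-basis and $\delta\in\mathbf{Z}_p^\times$.
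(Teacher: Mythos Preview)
Your proof is correct and follows essentially the same approach as the paper: both arguments use that $[\omega,\omega]=0$ (since $\Fil^0\Dcris(T)$ is its own orthogonal complement), that $[\cdot,\cdot]$ is alternating and $\Qp$-bilinear, and then verify the four duality relations directly. Your write-up is more explicit and adds the check that $\delta\in\Zp^\times$ (the paper only records $\delta\in\Qp^\times$ from non-degeneracy), but the underlying reasoning is the same.
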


\begin{proof}
Because $\Fil^0 \Dcris(T)$ is its own orthogonal complement, we have $[\omega,\omega]=0$. We must have $[\omega, \varphi(\omega)] \in \Qp^\times$ since $[,]$ is non-degenerate. So it makes sense to consider $\varphi(\omega)/\delta$ and $\omega/\delta$. By using the fact that $[,]$ is alternating and $\Qp$-linear, we readily check that the two bases are dual to each other.
\end{proof}

Consider the basis $(d_m^{-1})_m \in \varprojlim_m \mathcal{O}^\times_{F_m}$ of $\varprojlim_m \mathcal{O}_{F_m}$ as a $\Lambda(U)$-module. Then, 
$$
y_{F_\infty/\Qp}((d_m^{-1})_m)=\left( \sum_{v \in \Gal(F_m/\Qp)} d_m^{-v} \cdot v \right)_m
$$
is a $\Lambda(U)$-basis of $S_{F_\infty/\Qp}$ which we denote by $\Omega_{\Qp}$. If $\{v_1,v_2\}$ is a Hodge-compatible basis of $\Dcris(T)$, we define the $v_i$th component of the $p$-adic regulator 
$$
\mathcal{L}_{T,k_\infty,v_i}:H^1_\mathrm{Iw}(k_\infty,T) \to \Lambda(U) \widehat{\otimes} \mathcal{H}(\Gamma^\text{cyc})
$$
as the composition of $\frac{\mathcal{L}_{T,k_\infty}}{\Omega_{\Qp}}$ with the projection of $\Lambda(U) \widehat{\otimes} \mathcal{H}(\Gamma^\text{cyc})\otimes_{\Zp} \Dcris(T)$ to the $v_i$-component. Note that if $\{v_1^\prime,v_2^\prime \}$ is the dual basis, then $\mathcal{L}_{T,k_\infty,v_i(z)} = \left[ \frac{\mathcal{L}_{T,k_\infty}(z)}{\Omega_{\Qp}}, v_i^\prime \right]$.

\begin{proposition}\label{prop:interpolation regulator}
Let $\theta$ be a finite order character of $\Gal(k_\infty/\Qp)$ such that $\theta(\gamma)$ is a primitive $p^n$th root of unity and $\theta(u)$ is a primitive $p^m$th root of unity. Then, the column vector $(\mathcal{L}_{T,k_\infty,\omega}(z)(\theta), \mathcal{L}_{T,k_\infty,\varphi(\omega)}(z)(\theta))^t$ is equal to
$$
p^{-n-1}\tau(\theta\lvert_{G_n})\theta(u)^{n+1}\theta(\Omega_{\Qp}^{-1}) \left( \sum_{\sigma \in \Gal(F_m^n/\Qp)} \theta^{-1}(\sigma)\exp^\ast_{F_m^n,\omega}(\sigma(z)) \right)
\begin{bmatrix}
\frac{-\beta\alpha^{n+1}+\alpha\beta^{n+1}}{-\alpha+\beta}\\
\frac{-\beta\alpha^{n+2}+\alpha\beta^{n+2}}{-\alpha+\beta}
\end{bmatrix}
$$
where $\theta(\Omega_{\Qp}^{-1}) = \sum_{v \in \Gal(F_m/\Qp)}\theta(v)d_m^v$.
\end{proposition}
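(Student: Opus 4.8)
The plan is to deduce the formula from the interpolation property of the Loeffler--Zerbes two-variable big logarithm, specialised to the self-dual Tate module $T=T_p(E)$. First I would invoke the interpolation/reciprocity theorem for $\mathcal{L}_{T,k_\infty}$ proved in \cite{LZ14} at a de Rham character $\theta$ of $\Gal(k_\infty/\Qp)$ of the shape in the statement; alternatively one bootstraps it from the one-variable Perrin--Riou interpolation of $\mathcal{L}_{T,F_m}$ together with proposition \ref{regulator inverse limit}, which identifies $\mathrm{proj}_m\circ\mathcal{L}_{T,k_\infty}$ with $\mathcal{L}_{T,F_m}^{G_m^\prime}$ on $z_m$ and thereby folds the sum $\sum_{\sigma\in\Gal(F_m/\Qp)}\theta(\sigma)[\sigma]$ into the formula. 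Since the Hodge--Tate weights of $T$ lie in $\{0,1\}$ the relevant twist is the trivial one, so no factorial or derivative factors occur, and for a ramified character the exceptional Euler factor is trivial; the upshot is an identity
\[
\frac{\mathcal{L}_{T,k_\infty}(z)(\theta)}{\theta(\Omega_{\Qp})}=\varepsilon(\theta)\,p^{-n-1}\,\tau(\theta\lvert_{G_n})\,\theta(u)^{n+1}\,\theta(\Omega_{\Qp}^{-1})\cdot\varphi^{-(n+1)}\!\left(\sum_{\sigma\in\Gal(F_m^n/\Qp)}\theta^{-1}(\sigma)\exp^\ast_{F_m^n,T}(\sigma z_{n,m})\right)
\]
in $\Dcris(T)\otimes_{\Qp}\overline{\Qp}$, where $\varepsilon(\theta)$ is an explicit sign or root of unity, the factor $\theta(u)^{n+1}$ records the arithmetic Frobenius on the maximal unramified subfield $F_m$ of the coefficient ring of $\Dcris(F_m^n,T)=\Dcris(T)\otimes_{\Qp}F_m$, and $\theta(\Omega_{\Qp}^{-1})=\sum_{v\in\Gal(F_m/\Qp)}\theta(v)d_m^v$ is the unramified period matching the choice $\Omega_{\Qp}=y_{F_\infty/\Qp}((d_m^{-1})_m)$.

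Next I would specialise. As $T$ is self-dual we identify $\Dcris(T^\ast(1))$ with $\Dcris(T)$, so $\exp^\ast_{F_m^n,T}$ lands in $\Fil^0\Dcris(T)=\Qp\,\omega$, and by \cite[Example 3.11]{BK90} it coincides with the classical dual exponential $\exp^\ast_{\omega_E}$; hence the inner sum equals $\left(\sum_\sigma\theta^{-1}(\sigma)\exp^\ast_{F_m^n,\omega}(\sigma z)\right)\omega$. It remains to compute the relevant power of $\varphi$ applied to $\omega$ in the basis $\{\omega,\varphi(\omega)\}$: from $\varphi^2-\tfrac{a_p}{p}\varphi+\tfrac1p=0$, whose roots are $\alpha$ and $\beta$, the linear recursion this imposes on the coordinates of $\varphi^k(\omega)$ yields an explicit $\overline{\Qp}$-combination of $\omega$ and $\varphi(\omega)$ with coefficients built from $\alpha^{n+\bullet},\beta^{n+\bullet}$ and $(\beta-\alpha)^{-1}$. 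Finally, to extract the two entries of the column vector I would use lemma \ref{dual basis}: the basis dual to $\{\omega,-\varphi(\omega)\}$ for $[\cdot,\cdot]$ is $\{\varphi(\omega)/\delta,\,\omega/\delta\}$ with $\delta=[\omega,\varphi(\omega)]$, and $[\omega,\omega]=0$ because $\Fil^0$ is isotropic, so pairing with the dual basis simply returns the coordinate of $\mathcal{L}_{T,k_\infty}(z)(\theta)/\theta(\Omega_{\Qp})$ along $\omega$, resp. along $\varphi(\omega)$. Substituting the coordinate expression of the Frobenius power then produces the column vector of the statement, and gathering the remaining scalars from the first step gives the prefactor.

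The linear-algebra part of the second paragraph --- a bounded computation in the two-dimensional $\Dcris(T)$ using the explicit recursion for powers of $\varphi$ and the dual basis of lemma \ref{dual basis} --- is routine. The genuine difficulty, which is where I expect to spend the effort, is pinning down the interpolation formula of the first paragraph with all its constants in the normalisations fixed here: the exact power of $p$; whether the Gauss sum enters as $\tau(\theta\lvert_{G_n})$ or its reciprocal (reconciled via $\tau(\theta\lvert_{G_n})\tau(\theta^{-1}\lvert_{G_n})=\pm p^{n+1}$); the precise power of $\varphi$ and the way the semilinear Frobenius on $\Dcris(F_m^n,T)=\Dcris(T)\otimes_{\Qp}F_m$ splits into the $\Qp$-linear operator responsible for the $\alpha,\beta$-combination and the arithmetic Frobenius of $F_m$ responsible, after evaluation at $\theta$, for $\theta(u)^{n+1}$; and the bookkeeping between the period $\Omega_{\Qp}$ one divides out in the definition of the component maps and the ``inverse'' period $\theta(\Omega_{\Qp}^{-1})$ in the answer. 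A helpful consistency check is that the resulting formula must be compatible with the decomposition $\mathcal{L}_{T,k_\infty}=(v_1\,v_2)M_T\,\col_T^{k_\infty}$ of proposition \ref{coleman inverse limit} and with the interpolation of the $\sharp/\flat$-pairing recorded in lemma \ref{lm:interpolation K-pairing}.
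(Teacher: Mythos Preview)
Your proposal is correct and follows essentially the same route as the paper's proof: invoke the Loeffler--Zerbes interpolation formula \cite[Theorem~4.15]{LZ14}, expand the epsilon factor as $\varepsilon(\theta^{-1})=p^n\theta(u)^{n+1}/\tau(\theta^{-1}\lvert_{G_n})$, pass to components via the dual basis of lemma~\ref{dual basis}, use that $\exp^\ast$ lands in $\Fil^0=\Qp\,\omega$ so only the $-\varphi(\omega)$-coefficient survives the pairing, compute the relevant entry of $C_\varphi^{-(n+1)}$ explicitly in terms of $\alpha,\beta$, and convert $\tau(\theta^{-1})^{-1}$ to $p^{-n-1}\tau(\theta)$ (with $\overline{\theta}(-1)=1$ since $\theta$ factors through $\Gamma^{\mathrm{cyc}}$). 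The only point to watch is the sign of the Frobenius exponent: the paper's citation of \cite[Theorem~4.15]{LZ14} gives $\varphi^{n+1}$ acting on $\exp^\ast$, which is then moved across $[\cdot,\cdot]$ to become $\varphi^{-n-1}$ on the dual basis vector---your $\varphi^{-(n+1)}$ on the $\exp^\ast$ side is the mirror-image bookkeeping, so just be careful when matching conventions.
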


\begin{proof}
The proof for $\mathcal{L}_{T,k_\infty,-\varphi(\omega)}(z)(\theta)$ is similar to the proof for $\mathcal{L}_{T,k_\infty,\omega}(z)(\theta)$ so we only show the result for the latter. Let $\varepsilon(\theta^{-1})$ be the epsilon-factor of $\theta^{-1}$. The discussion of \cite[Section 2.8]{LZ14} shows that (notice that in \textit{loc. cit.}, the Gauss sum $\tau(\theta\lvert_{G_n})$ is defined by $\sum_{\sigma \in \Gal(F_m^n/\Qp)}\theta^{-1}(\sigma) \zeta_{p^{n+1}}^\sigma$ which is different from the convention we are using)
$$
\varepsilon(\theta^{-1}) = \frac{p^n\theta(u)^{n+1}}{\tau(\theta^{-1}\lvert_{G_n})}.
$$
Moreover, we have that $\theta(\Omega_{\Qp}^{-1}) = \sum_{v \in \Gal(F_m/\Qp)}\theta(v)d_m^v$.
By \cite[Theorem 4.15]{LZ14}, 
$$
\mathcal{L}_{T,k_\infty}(z)(\theta) = \frac{\theta(u)^{n+1}}{\tau(\theta^{-1}\lvert_{G_n})}\varphi^{n+1}\exp^\ast_{T(\theta^{-1})^\ast (1)}(z).
$$
It follows that
$$
\mathcal{L}_{T,k_\infty,\omega}(z)(\theta) = \theta(\Omega_{\Qp}^{-1})\frac{\theta(u)^{n+1}}{\tau(\theta^{-1}\lvert_{G_n})} \left[ \sum_{\sigma \in \Gal(F_m^n/\Qp)}\theta^{-1}(\sigma)\exp_{F_m^n,T}^\ast(\sigma(z)),\varphi^{-n-1}(\varphi(\omega)/\delta) \right]
$$
by using the identity 
$$
\exp^\ast_{\Qp,T(\theta^{-1})^\ast(1)}(z) = \sum_{\sigma \in \Gal(F_m^n/\Qp)}\theta^{-1}(\sigma)\exp^\ast_{F_m^n,T^\ast(1)}(\sigma(z)). 
$$
Powers of the matrix of $\varphi$ with respect to the basis $\{\omega, -\varphi(\omega)\}$ are given by
$$
C_\varphi^{-k} = \frac{1}{-\alpha+\beta} \begin{bmatrix} -\alpha^{k+1}+\beta^{k+1} & \alpha^k -\beta^k \\ -\beta\alpha^{k+1} +\alpha \beta^{k+1} & \beta \alpha^k - \alpha \beta^k \end{bmatrix}.
$$

Recall that the kernel of the exponential $\exp_{F_m^n,T} : F_m^n\otimes \Dcris(T) \to H^1(F_m^n,T)$ is $\Fil^0\Dcris(T)$. So $\exp_{F_m^n,T}(\omega)=0$. Let $y$ be any element of $H^1(F_m^n,T)$. Then,
$$
[\exp_{F_m^n,T}^\ast (y), \omega ] = \langle y,\exp_{F_m^n,T}(\omega)\rangle =0. 
$$
Therefore, only the terms in $-\varphi(\omega)$ in the formula for the regulator will contribute non trivially. Therefore,
\begin{align*}
&\frac{\tau(\theta^{-1}\lvert_{G_n})\mathcal{L}_{T,k_\infty,\omega}(z)(\theta)}{\theta(\Omega_{\Qp}^{-1})\theta(u)^{n+1}}\\
&=\left[ \sum_{\sigma \in \Gal(F_m^n/\Qp)} \theta^{-1}(\sigma)\exp^\ast_{F_m^n,T}(\sigma(z)), \frac{\delta^{-1}(-\beta\alpha^{n+1}+\alpha\beta^{n+1})}{-\alpha+\beta}\varphi(\omega) \right]\\
&= \frac{(-\beta\alpha^{n+1}+\alpha\beta^{n+1})}{-\alpha+\beta} \left( \sum_{\sigma \in \Gal(F_m^n/\Qp)} \theta^{-1}(\sigma)\exp^\ast_{F_m^n,\omega}(\sigma(z)) \right)
\end{align*}
by duality. The result follows since
$$
\frac{1}{\tau(\theta^{-1}\lvert_{G_n})} = p^{-n-1}\overline{\theta}(-1)\tau(\theta\lvert_{G_n}) 
$$
and $\overline{\theta}(-1)=1$ because the cyclotomic part of $\theta$ factors through $$\Z/(p-1)\Z \times \Zp \to \Zp \to \Z/p^n\Z.$$
\end{proof}

\begin{lemma}\label{lm:two-variable interpolation}
The maps $\mathcal{H}^t \col_{\text{chr}}^t$ and $\mathcal{L}_{T,k_\infty}$ are uniquely determined by their values at finite order characters of $\Gamma_p$.
\end{lemma}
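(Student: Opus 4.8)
The plan is to argue that both maps land in the Iwasawa algebra $\Lambda(\Gamma_p)$ (or $\mathcal{H}_{\widehat F_\infty}(\Gamma_p)$), and that an element of such a ring is determined by its images under the evaluation maps attached to the finite-order characters of $\Gamma_p \cong \Gamma^{\mathrm{cyc}}_{\Qp} \times U$. First I would record that $\col_{\mathrm{chr}}$ takes values in $\Lambda(\Gamma)^{\oplus 2}$, as was established just before the statement via \cite[Lemma 2.18]{Spr16}; hence $\mathcal{H}^t\col_{\mathrm{chr}}^t(z)$ has entries in $\mathcal{H}(\Gamma^{\mathrm{cyc}})\widehat\otimes\Lambda(U)$ because the entries of $\mathcal{H}$ lie in $\mathcal{H}(\Gamma^{\mathrm{cyc}})$ by \cite[Lemma 4.4 and Lemma 4.8]{Spr12}. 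Likewise $\mathcal{L}_{T,k_\infty}(z)$ lies in $\Omega_{\Qp}\cdot\bigl(\mathcal{H}(\Gamma^{\mathrm{cyc}})\widehat\otimes\Lambda(U)\bigr)$ by its very definition \cite[Definition 4.6]{LZ14}. So in both cases we are reduced to the following ring-theoretic statement: an element of $\mathcal{H}_{\widehat F_\infty}(\Gamma_p)$ that vanishes at every finite-order character of $\Gamma_p$ is zero.

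The key step is this density statement. Writing $\Gamma_p = \Gamma^{\mathrm{cyc}}_{\Qp}\times U$ and identifying $\mathcal{H}_{\widehat F_\infty}(\Gamma_p)$ with power series in $\gamma-1$ and $u-1$ over $\widehat F_\infty$ converging on closed balls of radius $<1$, a finite-order character $\theta$ of $\Gamma_p$ sends this ring to its value obtained by substituting $\gamma-1 \mapsto \theta(\gamma)-1$ and $u-1\mapsto \theta(u)-1$, where $\theta(\gamma)$ and $\theta(u)$ range over all $p$-power roots of unity. Since the $p$-power roots of unity accumulate at $1$ and lie inside every closed ball of radius $<1$, an element of $\mathcal{H}_{\widehat F_\infty}(\Gamma_p)$ vanishing at all of them vanishes identically: this is the two-variable version of the standard fact that a power series convergent on the open unit disc with infinitely many zeros accumulating at an interior point is zero. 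Concretely, I would first fix the $U$-direction, regard the element as a power series in $\gamma-1$ with coefficients in $\mathcal{H}_{\widehat F_\infty}(U)$, deduce that each coefficient vanishes at all finite-order characters of $\Gamma^{\mathrm{cyc}}_{\Qp}$, hence is zero by the one-variable statement, and then conclude. Since the torsion/$\Lambda$-module structure is not needed here, this works equally well on the larger ring $\mathcal{H}_{\widehat F_\infty}(\Gamma_p)$, which contains both targets.

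The main obstacle is purely bookkeeping: making sure that the finite-order characters of $\Gamma_p$ being used — namely those $\theta$ with $\theta(\gamma)$ a primitive $p^n$th root of unity and $\theta(u)$ a primitive $p^m$th root of unity, as in Propositions \ref{prop:interpolation regulator} and \ref{lm:interpolation K-pairing} and Lemma \ref{lm:interpolation K-pairing} — together with the characters of smaller conductor really do separate points of the relevant rings, i.e. that one is allowed to range over \emph{all} finite-order characters and not only those of exact conductor appearing in the interpolation formulas. This is handled by noting that evaluation at a character of conductor dividing $(p^{n+1}, p^{m})$ is recovered by projecting to $\mathcal{H}_{\widehat F_\infty}(G'_m)$-type quotients and then specializing, so no information is lost; alternatively one observes directly that the zero set $\{(\zeta-1,\xi-1): \zeta,\xi\in\mu_{p^\infty}\}$ is Zariski-dense in the relevant rigid-analytic space. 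Everything else — the fact that $\mathcal{H}$ and $M_T$ have entries in $\mathcal{H}(\Gamma^{\mathrm{cyc}})$, and that $\Omega_{\Qp}$ is a unit after inverting it — has already been set up in the preceding subsections.
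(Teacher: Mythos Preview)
Your density claim for $\mathcal{H}_{\widehat F_\infty}(\Gamma_p)$ is false, and this is the heart of the argument, not mere bookkeeping. The points $\zeta_{p^n}-1$ do \emph{not} accumulate at an interior point of the open unit disc: one has $\lvert\zeta_{p^n}-1\rvert = p^{-1/(p^{n-1}(p-1))}\to 1$, so they drift out to the boundary. Concretely, $\log(1+X)\in\mathcal{H}(\Gamma^{\mathrm{cyc}})$ is nonzero yet vanishes at $X=\zeta-1$ for every $p$-power root of unity $\zeta$ (including $\zeta=1$). So an arbitrary element of $\mathcal{H}_{\widehat F_\infty}(\Gamma_p)$ is \emph{not} determined by its values at finite-order characters, and your reduction to a ``ring-theoretic statement'' collapses.

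What the paper does, and what you are missing, is to invoke the growth bound $O(\log(1+X)^{1/2})$ in the cyclotomic variable. After specializing the unramified variable at a finite-order character $\varpi$ of $U$, both $\mathcal{H}^t\col_{\mathrm{chr}}^t(\varpi)$ (by \cite[Lemmas 2.18, 2.34]{Spr16} together with \cite[Lemmas 4.4, 4.8]{Spr12}) and $\mathcal{L}_{T,F_m}$ (the Perrin-Riou regulator, via \cite[Theorem 4.7(1)]{LZ14}) are $O(\log(1+X)^{1/2})$. An element of $\mathcal{H}(\Gamma^{\mathrm{cyc}})$ of growth strictly less than $O(\log(1+X))$ that vanishes at all $\zeta-1$ must be divisible by $\log(1+X)$ and hence be zero. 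In the unramified direction one then lands in $\Zp\llbracket U\rrbracket$ (resp.\ $\Omega_{\Qp}\cdot\Zp\llbracket U\rrbracket$), where Weierstrass preparation does give the density statement you wanted. So the correct argument separates the two variables and uses a genuine growth input in the cyclotomic one; your uniform treatment of both variables via a nonexistent identity principle does not work.
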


\begin{proof}
Any finite order character $\theta$ of $\Gamma_p$ can be written as $\theta = \eta \varpi$ where $\eta$ is a finite order character of $\Gamma^\mathrm{cyc}_{\Qp}$ and $\varpi$ a finite order character of $U$. For a fixed $\varpi$, $\mathcal{H}^t \col_{\text{chr}}^t(\varpi)$ is $O(\log(1+X)^{1/2})$ by \cite[Lemma 2.18 and Lemma 2.34]{Spr16} and thus is uniquely determined by its values at $\eta$. For $\mathcal{L}_{T,k_\infty}$, property (1) of \cite[Theorem 4.7]{LZ14} uniquely determines the values since Perrin-Riou's regulator $\mathcal{L}_{T,F_m}$ is also $O(\log(1+X)^{1/2})$. For $\eta$ fixed, $\mathcal{H}^t \col_{\text{chr}}^t(\eta)$ (resp. $\mathcal{L}_{T,k_\infty}(\eta)$) is an element of $\Zp\llbracket U \rrbracket$ (resp. $\Omega_{\Qp}\cdot \Zp\llbracket U \rrbracket$) and therefore uniquely characterized by its values at such finite order character $\varpi$.
\end{proof}

\begin{proposition}\label{prop:comparison}
We have $\col^t_{\text{chr}} = -\col_T^{k_\infty}$.
\end{proposition}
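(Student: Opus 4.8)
The plan is to deduce the identity $\col_{\text{chr}}^t = -\col_T^{k_\infty}$ from the equality of the associated $p$-adic regulators, using the interpolation formulas and the uniqueness statement of Lemma \ref{lm:two-variable interpolation}. First I would observe that by Proposition \ref{coleman inverse limit} applied to $T = T_p(E)$ with the Hodge-compatible basis $\{\omega, -\varphi(\omega)\}$, and by the identity $C_n = \mathcal{A}_n^t$, $C_\varphi = (A^t)^{-1}$ noted after Lemma \ref{dual basis}, the logarithmic matrix $M_T$ attached to this basis is exactly $\lim_{n\to\infty}(C_\varphi)^{n+1}C_n\cdots C_1 = \lim_{n\to\infty}(A^t)^{-n-1}\mathcal{A}_n^t\cdots\mathcal{A}_1^t = \mathcal{H}^t$, where $\mathcal{H} = \lim_n \mathcal{H}_n A^{-n-1}$. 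Thus $\mathcal{L}_{T,k_\infty} = (\omega, -\varphi(\omega))\cdot \mathcal{H}^t \cdot \col_T^{k_\infty}$ by Proposition \ref{coleman inverse limit}, while on the other side the relation recorded just before the statement gives $\mathcal{H}^t\col_{\text{chr}}^t(z)(\theta) = (A^t)^{-n-1}(P_{(n,m),c_{n,m}}(z)(\theta), P_{(n,m),c_{n-1,m}}(z)(\theta))^t$.

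The core computation is then to match the two vectors after evaluation at a finite order character $\theta$ with $\theta(\gamma)$ a primitive $p^n$th root of unity and $\theta(u)$ a primitive $p^m$th root of unity. From Lemma \ref{lm:interpolation K-pairing}, the vector $(P_{(n,m),c_{n,m}}(z)(\theta), P_{(n,m),c_{n-1,m}}(z)(\theta))^t$ has vanishing second entry and first entry equal to $\tau(\theta|_{G_n})\theta(u)^{n+1}\theta(\Omega_{\Qp}^{-1})\bigl(\sum_\sigma \exp^\ast_{\omega_E}(z^\sigma)\theta(\sigma)^{-1}\bigr)$. Multiplying by $(A^t)^{-n-1}$ and using the explicit powers $C_\varphi^{-k}$ from the proof of Proposition \ref{prop:interpolation regulator} (noting $C_\varphi = (A^t)^{-1}$, so $(A^t)^{-n-1} = C_\varphi^{n+1}$), the result is the vector $\bigl(\tfrac{-\beta\alpha^{n+1}+\alpha\beta^{n+1}}{-\alpha+\beta}, \tfrac{-\beta\alpha^{n+2}+\alpha\beta^{n+2}}{-\alpha+\beta}\bigr)^t$ times that scalar --- which is, up to the explicit factor $p^{-n-1}\tau(\theta|_{G_n})^{-1}$ versus $\tau(\theta|_{G_n})$ discrepancy and the sign, precisely the formula in Proposition \ref{prop:interpolation regulator} for $(\mathcal{L}_{T,k_\infty,\omega}(z)(\theta), \mathcal{L}_{T,k_\infty,\varphi(\omega)}(z)(\theta))^t$. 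Carefully comparing the two interpolation formulas, we obtain $\mathcal{H}^t\col_{\text{chr}}^t(z)(\theta) = -\mathcal{H}^t\col_T^{k_\infty}(z)(\theta)$ at all such $\theta$; here the overall sign $-1$ comes from the fact that $\mathcal{L}_{T,k_\infty}$ is expressed in the basis $\{\omega,-\varphi(\omega)\}$ (the second component being $\mathcal{L}_{T,k_\infty,-\varphi(\omega)} = -\mathcal{L}_{T,k_\infty,\varphi(\omega)}$), together with the $\exp^\ast$ versus $\exp^\ast_{\omega_E}$ normalization.

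By Lemma \ref{lm:two-variable interpolation}, both $\mathcal{H}^t\col_{\text{chr}}^t$ and $\mathcal{L}_{T,k_\infty}$ --- hence $\mathcal{H}^t\col_T^{k_\infty}$ --- are uniquely determined by their values at finite order characters of $\Gamma_p$, so the equality $\mathcal{H}^t\col_{\text{chr}}^t = -\mathcal{H}^t\col_T^{k_\infty}$ holds as an identity of maps into $\Lambda(\Gamma_p)^{\oplus 2}$ (inside $\mathcal{H}_{\widehat{F}_\infty}(\Gamma_p)^{\oplus 2}$). Finally I would cancel $\mathcal{H}^t$: since $\det\mathcal{H}$ is a nonzero element of $\mathcal{H}(\Gamma^\text{cyc})$ (it is $O(\log_p(1+X)^{1/2})$ and nonzero by Proposition \ref{determinant} applied to the elliptic curve case, where $d_+ = 1$), $\mathcal{H}^t$ is invertible over $\mathrm{Frac}(\mathcal{H}(\Gamma^\text{cyc}))$, and both $\col_{\text{chr}}^t$ and $\col_T^{k_\infty}$ already take values in the torsion-free module $\Lambda(\Gamma_p)^{\oplus 2}$, so the equality $\mathcal{H}^t\col_{\text{chr}}^t = -\mathcal{H}^t\col_T^{k_\infty}$ forces $\col_{\text{chr}}^t = -\col_T^{k_\infty}$. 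The main obstacle I anticipate is the bookkeeping in the character computation: keeping track of the competing Gauss-sum normalizations (the paper's $\tau(\theta|_{G_n}) = \sum\theta(\sigma)\zeta^\sigma$ versus Loeffler--Zerbes' $\sum\theta^{-1}(\sigma)\zeta^\sigma$), the factor $p^{-n-1}$ coming from $1/\tau(\theta^{-1}|_{G_n}) = p^{-n-1}\overline\theta(-1)\tau(\theta|_{G_n})$ with $\overline\theta(-1)=1$, and the sign and $\omega$-normalization conventions, so that the overall scalar on both sides agrees exactly and the sign comes out to $-1$ rather than $+1$.
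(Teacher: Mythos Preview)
Your proposal is correct and follows essentially the same approach as the paper: evaluate both $\mathcal{H}^t\col_{\text{chr}}^t$ and the regulator at finite order characters using Lemma~\ref{lm:interpolation K-pairing} and Proposition~\ref{prop:interpolation regulator}, identify $(A^t)^{-n-1}=C_\varphi^{n+1}$ and $M_T=\mathcal{H}^t$, invoke Lemma~\ref{lm:two-variable interpolation} for uniqueness, and cancel $\mathcal{H}^t$ via Proposition~\ref{determinant}. The only cosmetic difference is that the paper extracts the sign directly from rewriting $C_\varphi^{n+1}$ using $\alpha\beta=p$ (the first column becomes $p^{-n-1}(-\alpha+\beta)^{-1}(\beta\alpha^{n+1}-\alpha\beta^{n+1},\,\beta\alpha^{n+2}-\alpha\beta^{n+2})^t$, i.e.\ the negative of the vector in Proposition~\ref{prop:interpolation regulator}), whereas you attribute it to the $\{\omega,-\varphi(\omega)\}$ normalization; these are two ways of saying the same thing.
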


\begin{proof}
Let $\theta$ be a finite order character of $\Gal(k_\infty/\Qp)$ such that $\theta(\gamma)$ is a primitive $p^n$th root of unity and $\theta(u)$ is a primitive $p^m$th root of unity. Combining lemma \ref{lm:interpolation K-pairing} and proposition \ref{prop:interpolation regulator}, we find
\begin{align*}
    \mathcal{H}^t\cdot \col_{\text{chr}}^t(z)(\theta) &= (A^t)^{-n-1} \begin{bmatrix}P_{(n,m),c_{n,m}}(z)(\theta) \\ P_{(n,m),c_{n-1,m}}(z)(\theta) \end{bmatrix} \\
    &= C_\varphi^{n+1}\begin{bmatrix}P_{(n,m),c_{n,m}}(z)(\theta) \\ P_{(n,m),c_{n-1,m}}(z)(\theta) \end{bmatrix}\\
    &= -\begin{bmatrix}\mathcal{L}_{T,k_\infty,\omega}(z)(\theta) \\ \mathcal{L}_{T,k_\infty,\varphi(\omega)}(z)(\theta) \end{bmatrix}
\end{align*}
where we used the relation $\alpha\beta=p$ to rewrite $C_\varphi^{n+1}$ as
$$
\frac{p^{-n-1}}{-\alpha+\beta}\begin{bmatrix} \beta\alpha^{n+1}-\alpha\beta^{n+1} & \ast\\ \beta\alpha^{n+2}-\alpha\beta^{n+2} & \ast\end{bmatrix}.
$$
Lemma \ref{lm:two-variable interpolation} and the formula for the decomposition of the regulator (see proposition \ref{coleman inverse limit}) give $\mathcal{H}^t\cdot \col^t_{\text{chr}}(z) = -M_T \cdot \col_{T}^{k_\infty}$. The result follows since by our choice of basis, $M_T=\mathcal{H}^t$ and $M_T$ is invertible by proposition \ref{determinant}.
\end{proof}

\begin{proposition}\label{chromatic orthogonal}
The submodule $\ker \col_{\sharp/\flat}$ is its own orthogonal complement with respect to the Perrin-Riou pairing
$$
\langle \sim,\sim \rangle:H^1_\mathrm{Iw}(k_\infty,T) \times H^1_\mathrm{Iw}(k_\infty,T^\ast(1)) \to \Lambda(\Gamma_p).
$$
\end{proposition}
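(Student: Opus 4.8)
The plan is to deduce the statement from the orthogonality established in Lemma~\ref{orthogonal} together with the comparison of Proposition~\ref{prop:comparison}. Recall that $T = T_p(E) = T^\ast(1)$ via the Weil pairing, that $K_{\infty,\gq} \cong k_\infty$, and that Sprung's chromatic Coleman map $\col_{chr} = (\col^\sharp,\col^\flat)$ is attached to $E$ itself, so it is literally the same map whether its source $H^1_\mathrm{Iw}(k_\infty,T_p(E))$ is viewed as $H^1_\mathrm{Iw}(k_\infty,T)$ or as $H^1_\mathrm{Iw}(k_\infty,T^\ast(1))$. Fix $\star\in\{\sharp,\flat\}$ and let $i\in\{1,2\}$ be the corresponding coordinate. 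By Proposition~\ref{prop:comparison}, $\col^\star = -\col_{T,\{i\}}^\gq$ relative to the Hodge--compatible basis $\{v_1,v_2\}=\{\omega,-\varphi(\omega)\}$ of $\Dcris(T)$, so that $\ker\col^\star = \ker\col_{T,\{i\}}^\gq$ inside $H^1_\mathrm{Iw}(k_\infty,T)$.

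First I would apply Lemma~\ref{orthogonal} with $d=2$ and $I_\gq=\{i\}$: the orthogonal complement of $\ker\col_{T,\{i\}}^\gq$ under the Perrin--Riou pairing is $\ker\col_{T^\ast(1),\{j\}}^\gq$, where $\{i,j\}=\{1,2\}$ and the Coleman map for $T^\ast(1)$ is the one attached to the dual basis $\{v_1',v_2'\}$ of $\Dcris(T^\ast(1))$. It therefore remains to show that $\ker\col_{T^\ast(1),\{j\}}^\gq$ (dual basis) equals $\ker\col^\star$ regarded inside $H^1_\mathrm{Iw}(k_\infty,T^\ast(1))$. By Lemma~\ref{dual basis}, $\{v_1',v_2'\}=\{\varphi(\omega)/\delta,\ \omega/\delta\}$; since $\Fil^0\Dcris(T^\ast(1))$ is the orthogonal complement of $\Zp\omega$, it is spanned by $v_2'$, so that $\mathcal{C} = \{v_2',-\varphi^\ast(v_2')\}$ is a Hodge--compatible basis of $\Dcris(T^\ast(1))$, where $\varphi^\ast$ is the Frobenius there. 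A direct computation with $C_{\varphi,\gq}^\ast=\tfrac1p(C_{\varphi,\gq}^{-1})^t$ and the dual matrices of \eqref{matrice duale} shows that conjugation by the matrix $P=\left(\begin{smallmatrix}0&-1\\1&0\end{smallmatrix}\right)\in\GL_2(\Zp)$ carries $(C_{\varphi,\gq},C_{n,\gq})$ to $(C_{\varphi,\gq}^\ast,C_{n,\gq}^\ast)$; in particular the logarithmic matrix of $T^\ast(1)$ in the basis $\mathcal{C}$ equals $M_{T,\gq}$, and the one in the dual basis equals $P\,M_{T,\gq}\,P^{-1}$. Tracking this $\GL_2(\Zp)$ change of basis through the decomposition of Proposition~\ref{coleman inverse limit} shows that the components of $\col_{T^\ast(1)}^\gq$ in the dual basis are, up to sign, those in the basis $\mathcal{C}$ with the two coordinates transposed. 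Applying Proposition~\ref{prop:comparison} to $T^\ast(1)=T$ — the same map $\col_{chr}$, now compared against $\col_{T^\ast(1)}^\gq$ in the basis $\mathcal{C}$ — identifies $\ker\col_{T^\ast(1),\{j\}}^\gq$ (dual basis) with $\ker\col^\star$ inside $H^1_\mathrm{Iw}(k_\infty,T^\ast(1))$, and combining with the above gives $(\ker\col^\star)^\perp=\ker\col^\star$.

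The main obstacle is the reconciliation in the previous paragraph of the dual basis $\{v_1',v_2'\}$ with the Hodge--compatible basis $\mathcal{C}$ of $\Dcris(T^\ast(1))$: one must check that the self--duality $T\cong T^\ast(1)$ coming from the Weil pairing intertwines the Frobenius on $\Dcris(T)$ with the Frobenius on $\Dcris(T^\ast(1))$ compatibly with the $a_p$--recursion, i.e. that conjugation by $P$ sends the pair $(C_{\varphi,\gq},C_{n,\gq})$ to $(C_{\varphi,\gq}^\ast,C_{n,\gq}^\ast)$, so that the two logarithmic matrices agree up to $P$. Once this is in place, the rest is a formal consequence of Lemma~\ref{orthogonal}, Proposition~\ref{prop:comparison}, and the standard bookkeeping of how a change of Dieudonné basis permutes and rescales the components of a Coleman map.
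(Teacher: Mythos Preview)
Your proposal is correct and follows essentially the same route as the paper: invoke Lemma~\ref{orthogonal} to get $(\ker\col_{T,\{i\}}^\gq)^\perp=\ker\col_{T^\ast(1),\{j\}}^\gq$, then use Proposition~\ref{prop:comparison} to identify these kernels with $\ker\col^{\sharp/\flat}$. The paper's proof simply asserts that $\ker\col_{T,\omega}$ and $\ker\col_{T,\varphi(\omega)}$ are self-orthogonal and cites the two results; you spell out the change-of-basis computation (conjugation by $P$ carrying $(C_{\varphi,\gq},C_{n,\gq})$ to $(C_{\varphi,\gq}^\ast,C_{n,\gq}^\ast)$, hence $M_{T^\ast(1),\gq}=PM_{T,\gq}P^{-1}$ and $\col_{T^\ast(1)}^\gq=\delta P\col_T^\gq$) that the paper leaves implicit, and this computation does go through as you outline.
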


\begin{proof}
By lemma \ref{orthogonal}, $\ker \col_{T, \omega}$ and $\ker \col_{T,\varphi(\omega)}$ are their own orthogonal complement. The result follows from proposition \ref{prop:comparison}.
\end{proof}

\subsection{The functional equation}

Let $\Sel(E/K_\infty)$ be the classical $p^\infty$-Selmer group of $E$ over $K_\infty$. Also denote by $E_{\sharp/\flat}$ the exact annihilator of $\ker \col_{\sharp/\flat}$ under Tate local duality
$$
H^1_\mathrm{Iw}(k_\infty,T) \times H^1(k_\infty,E[p^\infty]) \to \Qp/\Zp.
$$

\begin{definition}
For $\star,\circ \in \{\sharp,\flat\}$, the $\star\circ$-Selmer group of $E$ over $K_\infty$ is defined by
$$
\Sel^{\star\circ}(E/K_\infty) \colonequals \ker \left( \Sel(E/K_\infty) \to \frac{H^1(K_{\infty,\gp}, E[p^\infty])}{E_\star} \oplus \frac{H^1(K_{\infty,\gp^c},E[p^\infty])}{E_\circ} \right).
$$
\end{definition}

\begin{remark}
Our definition of $\Sel^{\star\circ}(E/K_\infty)$ differs from the one in \cite{Spr16} where different matrices are used to define $\mathcal{H}$.
\end{remark}

\begin{theorem}\label{main chromatic}
The functional equation
$$
\Sel^{\star\circ}(E/K_\infty)^\vee \sim \Sel^{\star\circ}(E/K_\infty)^{\vee,\iota}
$$
holds.
\end{theorem}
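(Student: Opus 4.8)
The plan is to reduce the chromatic functional equation to the multi-signed framework of Section~\ref{Coleman}, so that the $\Gamma$-system machinery of Section~\ref{sct:Non-simple} applies, and then to exploit the self-duality $E=E^t$. Put $T=T_p(E)$, fix $\gq\in\{\gp,\gp^c\}$, and work with the Hodge-compatible basis $\mathcal{B}_\gq=\{\omega,-\varphi(\omega)\}$ of $\mathbb{D}_{\cris,\gq}(T)$ of Section~\ref{chromatic}, using $K_{\infty,\gq}\cong k_\infty$. By Proposition~\ref{prop:comparison}, the chromatic Coleman maps $\col^\sharp,\col^\flat$ agree up to sign with the two components $\col^\gq_{T,1},\col^\gq_{T,2}$ of the multi-signed Coleman map attached to $\mathcal{B}_\gq$; hence $\ker\col^\sharp=\ker\col^\gq_{T,1}$ and $\ker\col^\flat=\ker\col^\gq_{T,2}$, so the Tate-duality annihilator $E_\star$ of $\ker\col_{\sharp/\flat}$ equals the local condition $H^1_{I_\gq}(K_{\infty,\gq},T^\dagger)$, where $I_\gq=\{1\}$ if $\star=\sharp$ and $I_\gq=\{2\}$ if $\star=\flat$. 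Since the conditions at $v\nmid p$ defining $\Sel(E/K_\infty)$ are the unramified subgroups $H^1_f$ of Section~\ref{definition}, I would conclude $\Sel^{\star\circ}(E/K_\infty)=\Sel_{\underline I}(T^\dagger/K_\infty)$ for $\underline I=(I_\gp,I_{\gp^c})$ determined by $(\star,\circ)$, whence $\Sel^{\star\circ}(E/K_\infty)^\vee=X_{\underline I}(T^\dagger/K_\infty)$.

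Next I would use that the Weil pairing gives a $G_K$-equivariant isomorphism $T\cong T^\ast(1)$, so that $T^\dagger\cong T^\ast(1)^\dagger\cong E[p^\infty]$, and then show $X_{\underline I^c}(T^\ast(1)^\dagger/K_\infty)=X_{\underline I}(T^\dagger/K_\infty)$. By Lemma~\ref{orthogonal}, $\ker\col^\gq_{T^\ast(1),I_\gq^c}$ is the orthogonal complement of $\ker\col^\gq_{T,I_\gq}$ under Perrin-Riou's pairing; by the self-orthogonality of $\ker\col_{\sharp/\flat}$ (Proposition~\ref{chromatic orthogonal}, which rests on the description of the dual basis in Lemma~\ref{dual basis}) this orthogonal complement is again $\ker\col^\gq_{T,I_\gq}$, so under $T\cong T^\ast(1)$ the local condition $H^1_{I_\gq^c}(K_{\infty,\gq},T^\ast(1)^\dagger)$ is carried to $H^1_{I_\gq}(K_{\infty,\gq},T^\dagger)$; the conditions at $v\nmid p$ match by \cite[Proposition 3.8]{BK90}. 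Thus $\Sel_{\underline I^c}(T^\ast(1)^\dagger/K_\infty)$ and $\Sel_{\underline I}(T^\dagger/K_\infty)$ are the same subgroup of $H^1_\Sigma(K_\infty,E[p^\infty])$, giving the claimed identity of Pontryagin duals.

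It then remains to run the $\Gamma$-system argument. The hypotheses \textbf{(H.crys)}, \textbf{(H.HT)}, \textbf{(H.Frob)}, \textbf{(H.P)} and \textbf{(Ram)} hold for $T=T_p(E)$, \textbf{(H.F)} holds by \cite{Ima75}, \textbf{(Tors)} holds by supersingularity, and $\Sel^{\star\circ}(E/K_\infty)^\vee$ is $\Lambda(\Gamma)$-torsion in our setting; so Theorem~\ref{non-simple parts} applies and gives $[X_{\underline I^c}(T^\ast(1)^\dagger/K_\infty)]_\text{ns}^\iota=[X_{\underline I}(T^\dagger/K_\infty)]_\text{ns}$, which by the previous paragraph reads $[X_{\underline I}(T^\dagger/K_\infty)]_\text{ns}^\iota=[X_{\underline I}(T^\dagger/K_\infty)]_\text{ns}$. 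Lemma~\ref{involution simple} supplies $[X_{\underline I}(T^\dagger/K_\infty)]_\text{si}^\iota=[X_{\underline I}(T^\dagger/K_\infty)]_\text{si}$ for free, and summing the two pieces, $[X_{\underline I}(T^\dagger/K_\infty)]^\iota=[X_{\underline I}(T^\dagger/K_\infty)]$, i.e.\ $\Sel^{\star\circ}(E/K_\infty)^{\vee,\iota}\sim\Sel^{\star\circ}(E/K_\infty)^\vee$. The real work lies entirely in the inputs already established — the comparison Proposition~\ref{prop:comparison} and the self-orthogonality Proposition~\ref{chromatic orthogonal} it yields, which is the genuine new point over \cite{AL21} and is what makes the $\Gamma$-system of Section~\ref{sct:Non-simple} available for chromatic local conditions; the step in this assembly that needs the most care is checking that the local conditions of $\Sel(E/K_\infty)$ coincide with the $H^1_f$ and $H^1_{I_\gq}$ of Section~\ref{definition}, so that the two Selmer constructions are genuinely equal rather than merely pseudo-isomorphic.
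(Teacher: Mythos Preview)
Your proof is correct and its core is the same as the paper's: use the self-orthogonality of the chromatic local conditions (Proposition~\ref{chromatic orthogonal}) to feed the $\Gamma$-system machinery of Section~\ref{sct:Non-simple} for the non-simple part, and invoke Lemma~\ref{involution simple} for the simple part. The organizational difference is that the paper does not first identify $\Sel^{\star\circ}(E/K_\infty)$ with a multi-signed Selmer group and then cite Theorem~\ref{non-simple parts} as a black box. Instead it defines finite-level groups $\Sel^{\star\circ}(E/K_n)$ directly, sets $\mathfrak{a}_n=\mathfrak{b}_n=\Sel^{\star\circ}(E/K_n)_{/\mathrm{div}}$, observes that Proposition~\ref{chromatic orthogonal} together with the analogue of Lemma~\ref{ortho col} for $E_{\sharp/\flat}^{\Gamma_{p,n}}$ makes the Flach pairing perfect, and then re-runs the arguments of Section~\ref{sct:Non-simple} verbatim with this symmetric $\Gamma$-system. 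The paper's route sidesteps the comparison of local conditions at $v\nmid p$ between the classical Selmer group and the $H^1_f$-Selmer group that you correctly flag as the point needing care; conversely, your route is shorter once that identification is verified, since Theorem~\ref{non-simple parts} can be quoted wholesale rather than reproved.
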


\begin{proof}
Define $\Sel^{\star\circ}(E/K_n)$ as the kernel of the map
$$
\Sel(E/K_n) \to \frac{H^1(K_{n,\gp}, E[p^\infty])}{E_\star^{\Gamma_{p,n}}} \oplus \frac{H^1(K_{n,\gp^c},E[p^\infty])}{E_\circ^{\Gamma_{p,n}}}
$$
where $\Sel(E/K_n)$ is the classical $p^\infty$-Selmer group over $K_n$. 
We apply the same arguments as in section \ref{sct:Non-simple} with 
$$
\mathfrak{a}_n = \mathfrak{b}_n = \Sel^{\star\circ}(E/K_n)_{/\text{div}}
$$
to show equality of non-simple parts. Remark that proposition \ref{chromatic orthogonal} and lemma \ref{ortho col} applied instead to $E_{\sharp/\flat}^{\Gamma_{p,n}}$ and $\overline{(\ker \col_{\sharp/\flat})_n}$ gives us the orthogonality condition required for the Flach pairing to be perfect. We directly get the equality of simple part by lemma \ref{involution simple}.
\end{proof}

\begin{remark}
Let $\mathbf{Q}^\text{cyc}/\mathbf{Q}$ be the cyclotomic $\Zp$-extension and let $\Sel^{\sharp / \flat}(E/\Q^\text{cyc})$ be the one-variable Selmer group defined in \cite{Spr12}. We can apply proposition \ref{chromatic orthogonal} and theorem \ref{main chromatic} verbatim to the one-variable setting. Thus one also get the functional equation
$$
\Sel^{\sharp / \flat}(E/\Q^\text{cyc})^\vee \sim \Sel^{\sharp / \flat}(E/\Q^\text{cyc})^{\vee,\iota}.
$$
\end{remark}

In \cite[Theorem 5.19]{Spr17}, Sprung proved an analytic functional equation for the completed chromatic $p$-adic $L$-function $\widehat{L}_p^{\sharp/\flat}(f,X)$ attached to a modular form of weight two. In the terminology of \textit{loc. cit.}, the chromatic Selmer groups considered in the present paper are the \textit{non-completed} chromatic Selmer groups. The previous remark hints at a functional equation for the non-completed $p$-adic $L$-function $L_p^{\sharp/\flat}(f,X)$. It would be interesting to find the explicit factor appearing in the functional equation.

\bibliographystyle{acm}
\bibliography{main.bib}

\end{document}